\newcommand{\Gsp}{\ensuremath{\mathcal{G}_{SP}}}
\newcommand{\Gmrf}{\ensuremath{\mathcal{G}_{MRF}}}
\newcommand{\ignore}[1]{}
\begin{document}

\title{A Graph-based Decomposition Method for Convex Quadratic Optimization with Indicators \thanks{This research is supported, in part, by  NSF grants  2006762 and 2007814.}
}


\author{Peijing Liu\thanks{Daniel J. Epstein Department of Industrial and Systems Engineering,
		University of Southern California,
		Los Angeles, CA, USA. 
		\texttt{peijingl@usc.edu} }         \and
        Salar Fattahi\thanks{Industrial and Operations Engineering, University of Michigan, Ann Arbor, MI, USA. \texttt{fattahi@umich.edu} } \and Andr\'es G\'omez\thanks{Daniel J. Epstein Department of Industrial and Systems Engineering,
        	University of Southern California,
        	Los Angeles, CA, USA. 
        	\texttt{gomezand@usc.edu}} \and
        Simge K{\"u}{\c{c}}{\"u}kyavuz\thanks{Department of Industrial Engineering and Management Sciences,
        	Northwestern University,
        	Evanston, IL, USA. 	
        	\texttt{simge@northwestern.edu} } 
}


\maketitle

\begin{abstract}
In this paper, we consider convex quadratic optimization problems with indicator variables when the matrix $Q$  defining the quadratic term in the objective is sparse. We use a graphical representation of the support of $Q$, and show that if this graph is a path, then we can solve the associated problem in polynomial time. This enables us to construct a 
 compact extended formulation for the closure of the convex hull of the epigraph of the mixed-integer convex problem. 
 Furthermore,  we propose a novel decomposition method for general (sparse) $Q$, which leverages the efficient algorithm for the path case. Our  computational experiments demonstrate  the effectiveness of the proposed method compared to state-of-the-art  mixed-integer optimization solvers.  
\end{abstract}



\section{Introduction}\label{sec1}
Given a positive semi-definite matrix $Q\in \R^{n\times n}$ and vectors $a,c\in \R^n$, we study the mixed-integer quadratic optimization problem
\begin{subequations}\label{eq:miqo}
\begin{align}
    \min_{x\in \R^n,z\in \{0,1\}^n}\;&a^\top z+c^\top x+\frac{1}{2}x^\top Qx\label{eq:miqo_obj}\\
    \text{s.t.}\;& x_i(1-z_i)=0&i=1,\dots,n.\label{eq:miqo_complementary}
\end{align}
\end{subequations}
Binary vector of indicator variables, $z$, is used to model the support of the vector of continuous variables, $x$. Indeed, if $a_i> 0$, then $z_i=1\Leftrightarrow x_i\neq 0$. Problem \eqref{eq:miqo} arises in portfolio optimization \cite{B:miqp}, sparse regression problems \cite{Bertsimas2016,cozad2014learning}, and probabilistic graphical models \cite{MKS21,KSMW20}, among others.

\subsection{Motivation: Inference with graphical models} 

A particularly relevant application of Problem \eqref{eq:miqo} is in sparse 
inference problems with Gaussian Markov random fields (GMRFs). Specifically, we consider
a special class of GMRF models known as Besag models \cite{besag1974spatial}, which are widely 
used in the literature \cite{besag1991bayesian,besag1995conditional,geman1986markov,hochbaum2001efficient,saquib1998ml,wu2010maximum} to 
model spatio-temporal processes including image restoration and computer vision, disease
mapping, and evolution of financial instruments. Given an undirected graph 
$\Gmrf=(N,E)$ with vertex set $N$ and edge set $E$, where edges encode adjacency
relationships, and given distances $d_{ij}$ associated with each edge, consider a multivariate random variable $V\in \mathbb{R}^N$  indexed by 
the vertices of $\Gmrf$ with probability distribution
\[
p(V) \, \propto \, \exp\left({-}\!\!\!\sum_{(i,j)\in E}\frac{1}{d_{ij}}(V_i-
V_j)^2\right),
\]
This probability distribution encodes the prior belief that adjacent variables have 
similar values. The values of $V$ cannot be observed directly, but rather some noisy 
observations $y$ of $V$ are available, where $y_i=V_i+\varepsilon_i$, with 
$\varepsilon_i\sim\mathcal{N}(0,\sigma_i^2)$.  Figure~\ref{fig: 2D} depicts a sample 
GMRF commonly used to model spatial processes, where edges correspond to horizontal 
and vertical adjacency. 

\begin{figure}[!h]
	\begin{center}
		\includegraphics[width=0.45\linewidth, trim={4cm 4cm 12cm 1cm},clip]{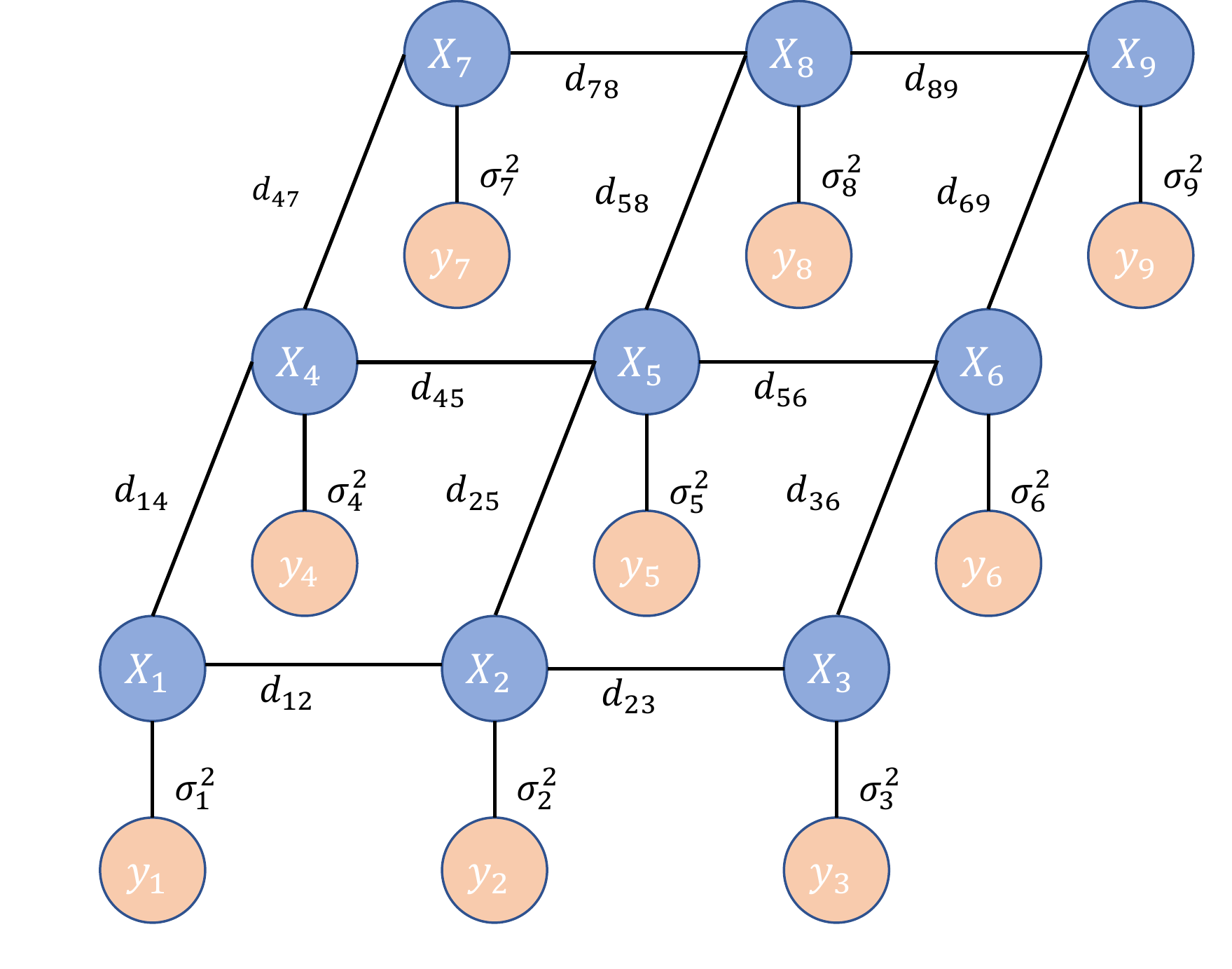}
		\caption{Two-dimensional GMRF.}	
		\label{fig: 2D}
	\end{center}
\end{figure}

In this case, the maximum a posteriori estimate of the true values of $V$ 
can be found by solving the optimization problem
\begin{equation}\label{eq:map}
\min_x\;\sum_{i\in N}\frac{1}{\sigma_i^2}(y_i-x_i)^2+\sum_{(i,j)\in E}\frac{1}{d_{ij}}(x_i-x_j)^2.
\end{equation}
Problem \eqref{eq:map} can be solved in closed form when there are no additional restrictions on the random variable. However, we consider the
situation where the random variable is also assumed to be sparse 
\cite{atamturk2021sparse}.  For example, few pixels in an image may be salient from the
background, few geographic locations may be affected by an epidemic, or the underlying 
value of a financial instrument may change sparingly over time.  Moreover, models such as
\eqref{eq:map} with sparsity have also been proposed to estimate precision matrices of 
time-varying Gaussian processes~\cite{fattahi2021scalable}. In all cases, the sparsity prior can 
be included in model \eqref{eq:map} with the inclusion of the $\ell_0$ term 
$\sum_{i\in N} a_iz_i$, where $a$ is a penalty vector and binary variable $z_i$ indicates whether the corresponding continuous variable $x_i$ is nonzero, for $i\in N$ . This results in an optimization problem of the form \eqref{eq:miqo}:
\begin{subequations}\label{eq:map_miqo}
\begin{align}
\min_{x,z}\;&\sum_{i\in N}\frac{1}{\sigma_i^2}(y_i-x_i)^2+\sum_{(i,j)\in E}\frac{1}{d_{ij}}(x_i-x_j)^2+\sum_{i\in N} a_iz_i\label{eq:map_miqo_obj}\\
\text{s.t.}\;&-Mz_i\leq x_i\leq Mz_i\qquad \forall i\in N\label{eq:map_miqo_M}\\
&x\in \R^N,\; z\in \{0,1\}^N.
\end{align}
\end{subequations}
Note that constraint \eqref{eq:map_miqo_M} corresponds to the popular big-M linearization of the complementarity constraints \eqref{eq:miqo_complementary}. In this case, it can be shown that setting $M=\max_{i\in N}y_i-\min_{i\in N}y_i$ results in a valid mixed-integer optimization formulation. Therefore, it is safe to assume that $(x,z)$ belongs to a compact set $\mathcal{X}=\{(x,z)\in \R^N\times [0,1]^N: -Mz\leq x\leq Mz\}$.

\subsection{Background} \label{sec:background}
Despite problem \eqref{eq:miqo} being NP-hard \cite{chen2017strong}, there has been tremendous progress towards solving it to optimality. Due to its worst case complexity, a common theme for successfully solving \eqref{eq:miqo} is the development of theory and methods for special cases of the problem, where matrix $Q$ is assumed to have a special structure, providing insights for the general case. For example, if matrix $Q$ is diagonal (resulting in a fully separable problem), then problem \eqref{eq:miqo} can be cast as a convex optimization problem via the perspective reformulation \cite{Ceria1999}. This convex hull characterization has led to the development of several techniques for problem \eqref{eq:miqo} with general $Q$, including cutting plane methods \cite{Frangioni2006,frangioni2017improving}, strong MISOCP formuations \cite{akturk2009strong,Gunluk2010}, approximation algorithms \cite{xie2020scalable}, specialized branching methods \cite{hazimeh2020sparse}, and presolving methods \cite{atamturk2020safe}. Recently, problem \eqref{eq:miqo} has been studied under other structural assumptions, including: quadratic terms involving two variables only \cite{anstreicher2021quadratic,atamturk2018strong,frangioni2018decompositions,hga:2x2,Jeon2017}, rank-one quadratic terms \cite{atamturk2019rank,atamturk2020supermodularity,wei2020convexification,wei2020ideal}, and quadratic terms with Stieltjes matrices \cite{atamturk2021sparse}. If the matrix can be factorized as $Q=Q_0^\top Q_0$ where $Q_0$ is sparse (but $Q$ is dense), then problem \eqref{eq:miqo} can be solved (under appropriate conditions) in polynomial time \cite{del2020subset}. Finally, in \cite{das2008algorithms}, the authors show that if the sparsity pattern of $Q$ corresponds to a tree with maximum degree $d$, and all coefficients $a_i$ are identical, then a cardinality-constrained version of problem \eqref{eq:miqo} can be solved in $\mathcal{O}(n^3d)$ time---immediately leading to an $\mathcal{O}(n^4d)$ algorithm for the regularized version considered in this paper.

We focus on the case where matrix $Q$ is sparse, and explore efficient methods to solve  problem \eqref{eq:miqo}. Our analysis is closely related to the support graph of $Q$, defined below.
\begin{definition}
Given matrix $Q\in \R^{n\times n}$, the support graph of $Q$ is an undirected graph $\mathcal{G}=(N,E)$, where $N=\{1,\dots,n\}$ and, for $i<j$, $(i,j)\in E\Leftrightarrow Q_{ij}\neq 0$.
\end{definition}
 Note that we may assume without loss of generality that graph $\mathcal{G}$ is connected, because otherwise problem \eqref{eq:miqo} decomposes into independent subproblems, one for each connected component of $\mathcal{G}$. 

\subsection{Contributions and outline} 
In this paper, we propose new algorithms and convexifications for problem \eqref{eq:miqo} when $Q$ is sparse. First, in Section~\ref{sec:path}, we focus on the case when $\mathcal{G}$ is a path. We propose an $\mathcal{O}(n^2)$ algorithm for this case, which improves upon the complexity resulting from the algorithm in \cite{das2008algorithms} without requiring any assumption on vector $a$. Moreover, we provide a compact extended formulation for the closure of the convex hull of $$X=\left\{(x,z,t)\in \R^n\times\{0,1\}^n\times \R: t\geq x^\top Qx,\;x_i(1-z_i)=0, \;\forall i\in N \right\}$$  for cases where  $\mathcal{G}$ is a path, requiring $\mathcal{O}(n^2)$ additional variables. In Section~\ref{sec:fenchel}, we propose a new method for general (sparse) $Q$, which leverages the efficient algorithm for the path case. In particular, using Fenchel duality, we relax selected quadratic terms in the objective \eqref{eq:miqo_obj}, ensuring that the resulting quadratic matrix has a favorable structure. In Section~\ref{sec:decomposition}, we elaborate on how to select the quadratic terms to relax. Finally, in Section~\ref{sec:computations}, we present computational results illustrating that the proposed method can significantly outperform off-the-shelf mixed-integer optimization solvers.  

\subsection{Notation} 
Given a matrix $Q\in \R^{n\times n}$ and indices $0\leq i<j\leq n+1$, we denote by $Q[i,j]\in\mathbb{R}^{(j-i-1)\times (j-i-1)}$ the submatrix of $Q$ from indices $i+1$ to $j-1$. Similarly, given any vector $c\in \R^n$, we denote by $c[i,j]\in\mathbb{R}^{j-i-1}$ the subvector of a vector $c$ from indices $i+1$ to $j-1$. Given a set $S\subseteq \R^{n}$, we denote by $\conv(S)$ its convex hull and by $\text{cl}\ \conv(S)$ the closure of its convex hull.

\section{Path Graphs}\label{sec:path}

In this section, we focus on the case where graph $\mathcal{G}$ is a path, that is, there exists a permutation function $\pi:\{1,\dots,n\}\to \{1,\dots,n\}$ such that $(i,j)\in E$ if and only if $i=\pi(k)$ and $j=\pi(k+1)$ for some $k=1,2,\dots, n-1$.
Without loss of generality, we assume variables are indexed such that $\pi(k)=k$, in which case matrix $Q$ is tridiagonal and problem \eqref{eq:miqo} reduces to
\begin{subequations}\label{eq:tridiagonalMIQO}
\begin{align}
\zeta=\min_{x\in \R^n,z\in \{0,1\}^n}\;&a^\top z+c^\top x+\frac{1}{2}\sum_{i=1}^{n}Q_{ii}{x_i}^2+\sum_{i=1}^{n-1}{Q_{i,i+1}}x_ix_{i+1}\\
    \text{s.t.}\;& x_i(1-z_i)=0\qquad\qquad\qquad i=1,\dots,n.
\end{align}
\end{subequations}

Problem~\eqref{eq:tridiagonalMIQO} is interesting in its own right: it has immediate applications  in the estimation of one-dimensional graphical models~\cite{fattahi2021scalable} (such as time-varying signals), as well as sparse and smooth signal recovery~\cite{mao2018spatio,atamturk2021sparse,ziniel2010tracking}. In particular, suppose that our goal is to estimate a sparse and smoothly-changing signal $\{x_t\}_{t=1}^n$ from observational data $\{y_t\}_{t=1}^n$. This problem can be written as the following optimization:
\begin{subequations}\label{eq:time-series}
\begin{align}
\min_{x\in \R^n,z\in \{0,1\}^n}\;&a^\top z+\sum_{t=1}^n(x_t-y_t)^2 + \sum_{t=1}^{n-1}(x_{t+1}-x_{t})^2\\
    \text{s.t.}\;& x_t(1-z_t)=0&\hspace{-4cm}t=1,\dots,n.
\end{align}
\end{subequations}
The first term in the objective promotes sparsity in the estimated signal, while the second and third terms promote the closeness of the estimated signal to the observational data and its temporal smoothness, respectively. It is easy to see that~\eqref{eq:time-series} can be written as a special case of~\eqref{eq:tridiagonalMIQO}.

First, we discuss how to solve \eqref{eq:tridiagonalMIQO} efficiently as a shortest path problem. For simplicity, we assume that $Q\succ 0$ (unless stated otherwise).

\subsection{A shortest path formulation}\label{sec:shortestPath}
In this section, we explain how to solve~\eqref{eq:tridiagonalMIQO} by solving a shortest path problem on an auxiliary directed acyclic graph (DAG). Define for $0\leq i<j\leq n+1$
{\small
\begin{align}
    w_{ij}&\defeq\sum_{k=i+1}^{j-1}a_k+\min_{x[i,j]\in\mathbb{R}^{j-i-1}}\;\left\{ \sum_{k=i+1}^{j-1}c_kx_k+\frac{1}{2}\sum_{k=i+1}^{j-1}Q_{kk}{x_k}^2+\sum_{k=i+1}^{j-2}{Q_{k,k+1}}x_kx_{k+1}\right\}\notag\\
    &=\sum_{k=i+1}^{j-1}a_k-\frac{1}{2}c[i,j]^\top Q[i,j]^{-1}c[i,j],\label{eq:definition_w_ij}
\end{align}
}
where the equality follows from the fact that \begin{equation}\label{eq:defXstar}x^*(i,j)=-Q[i,j]^{-1}c[i,j]\end{equation} is the corresponding optimal solution. By convention, we let 
$w_{i,i+1}=0$ for all $i=0,\ldots,n$.

We start by discussing how to solve a restriction of problem \eqref{eq:tridiagonalMIQO} involving only continuous variables. Given any fixed $\bar z\in \{0,1\}$, let $x(\bar z)$ be the unique minimizer of the optimization problem
\begin{subequations}\label{eq:tridiagonalMIQOFixedZ}
\begin{align}
\zeta(\bar z)=a^\top \bar z+\min_{x\in \R^n}\;&\left\{c^\top x+\frac{1}{2}\sum_{i=1}^{n}Q_{ii}{x_i}^2+\sum_{i=1}^{n-1}{Q_{i,i+1}}x_ix_{i+1}\right\}\\
    \text{s.t.}\;& x_i(1-\bar z_i)=0&\hspace{-5cm}i=1,\dots,n.
\end{align}
\end{subequations}
Lemma~\ref{lem:fixedZ} discusses the structure of the optimal solution $x(\bar z)$ in \eqref{eq:tridiagonalMIQOFixedZ}, which can be expressed using the optimal solutions $x^*(i,j)$ of subproblems given in \eqref{eq:defXstar}.
\begin{lemma}\label{lem:fixedZ}
Let $0=v_0< v_1<v_2<\dots<v_\ell<v_{\ell+1}= n+1$ be the indices such that $\bar z_{j}=0$ if and only if  $j=v_k$ for some $1\le k \le \ell$ and $\ell\in \{0,\dots,n\}$. Then $x(\bar z)_{v_k}=0$ for $k=1,\dots,\ell$, and $x(\bar z)[v_k,v_{k+1}]=x^*(v_k,v_{k+1})$. Finally, the optimal objective value is $\zeta(\bar z)=\sum_{k=0}^\ell w_{v_k,v_{k+1}}$.
\end{lemma}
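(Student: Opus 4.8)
The plan is to leverage the tridiagonal structure of $Q$: once the complementarity constraints in \eqref{eq:tridiagonalMIQOFixedZ} pin down a subset of the variables to zero, the remaining quadratic optimization separates into independent blocks, each of which is exactly one of the subproblems defining the weights $w_{ij}$.

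First I would dispense with the first claim, which is immediate: since $\bar z_{v_k}=0$ by the definition of $v_k$, the constraint $x_{v_k}(1-\bar z_{v_k})=0$ forces $x(\bar z)_{v_k}=0$ for each $k=1,\dots,\ell$. The substantive step is the decomposition. After substituting $x_{v_k}=0$ for all $k$, I would observe that the only terms of the objective coupling variables of distinct indices are the quadratic cross-terms $Q_{i,i+1}x_ix_{i+1}$, as $Q$ is tridiagonal. At every cut point $v_k$, both incident cross-terms $Q_{v_k-1,v_k}x_{v_k-1}x_{v_k}$ and $Q_{v_k,v_k+1}x_{v_k}x_{v_k+1}$ vanish because $x_{v_k}=0$. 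Consequently no term links a variable indexed below $v_k$ to one indexed above $v_k$, and the objective (ignoring the constant $a^\top\bar z$) decouples into a sum over $k=0,\dots,\ell$ of quadratic forms in the disjoint variable blocks $x[v_k,v_{k+1}]$.

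Next I would identify each block subproblem with the minimization in the definition \eqref{eq:definition_w_ij} of $w_{v_k,v_{k+1}}$. Because $Q\succ 0$, each principal submatrix $Q[v_k,v_{k+1}]$ is positive definite, so the block minimizer is unique and equals $x^*(v_k,v_{k+1})=-Q[v_k,v_{k+1}]^{-1}c[v_k,v_{k+1}]$ via \eqref{eq:defXstar}; this establishes the second claim, with empty blocks ($v_{k+1}=v_k+1$) contributing nothing, consistent with the convention $w_{i,i+1}=0$. Finally, for the objective value I would account for the linear term: since $\bar z_j=1$ precisely for $j\notin\{v_1,\dots,v_\ell\}$, we have $a^\top\bar z=\sum_{k=0}^\ell\sum_{j=v_k+1}^{v_{k+1}-1}a_j$, which is exactly the sum of the $a$-contributions appearing in the $w_{v_k,v_{k+1}}$. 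Adding this to the summed block minima yields $\zeta(\bar z)=\sum_{k=0}^\ell w_{v_k,v_{k+1}}$.

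The argument involves no deep difficulty; the main point requiring care is making the separability across the cut points $v_k$ fully rigorous and correctly handling the index bookkeeping—in particular the degenerate empty blocks and the boundary conventions $v_0=0$ and $v_{\ell+1}=n+1$—so that the $a$-terms and the quadratic minima line up precisely with $\sum_{k=0}^\ell w_{v_k,v_{k+1}}$.
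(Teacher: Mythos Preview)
Your proposal is correct and follows essentially the same approach as the paper: force $x_{v_k}=0$ from the complementarity constraints, use the tridiagonal structure to decompose the remaining quadratic into independent blocks, identify each block with the subproblem defining $w_{v_k,v_{k+1}}$, and sum. Your write-up is in fact more detailed than the paper's, which simply asserts the decomposition and leaves the bookkeeping (empty blocks, the $a^\top\bar z$ accounting) to the reader.
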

\begin{proof}
Constraints $x_{v_k}(1-\bar z_{v_k})=0$ and $\bar z_{v_k}=0$ imply that $x_{v_k}=0$ in any feasible solution. Moreover, note that since $x_{v_k}=0$ for all $k=1\dots,\ell$, problem \eqref{eq:tridiagonalMIQOFixedZ} decomposes into $\ell+1$ independent subproblems, each involving variables $x[v_k,v_{k+1}]$ for $k=0,\dots,\ell$. Note that some problems may contain no variables and are thus trivial. Finally, by definition, the optimal solution of those subproblems is precisely $x^*(v_k,v_{k+1})$. The optimal objective value can be verified simply by substituting $x$ with its optimal value. 
\end{proof}

Lemma~\ref{lem:fixedZ} shows that, given the optimal values for the indicator variables,  problem \eqref{eq:tridiagonalMIQO} is decomposable into smaller subproblems, each with a closed-form solution. This key property suggests that \eqref{eq:tridiagonalMIQO} can be cast as a shortest path (SP) problem.

\begin{definition}[SP graph]\label{def:SPgraph}
Define the weighted directed acyclic graph \Gsp\ with vertex set $N\cup\{0,n+1\}$, arc set $A=\{(i,j)\in \mathbb{Z}_+^2: 0\leq i<j\leq n+1\}$ and weights $w$ given in \eqref{eq:definition_w_ij}. Figure~\ref{fig:SPgraph} depicts a graphical representation of \Gsp.
\end{definition}

\begin{figure}[h!]
\centering
\includegraphics[trim={0cm 4.5cm 1cm 0cm}, clip, width=0.9\textwidth]{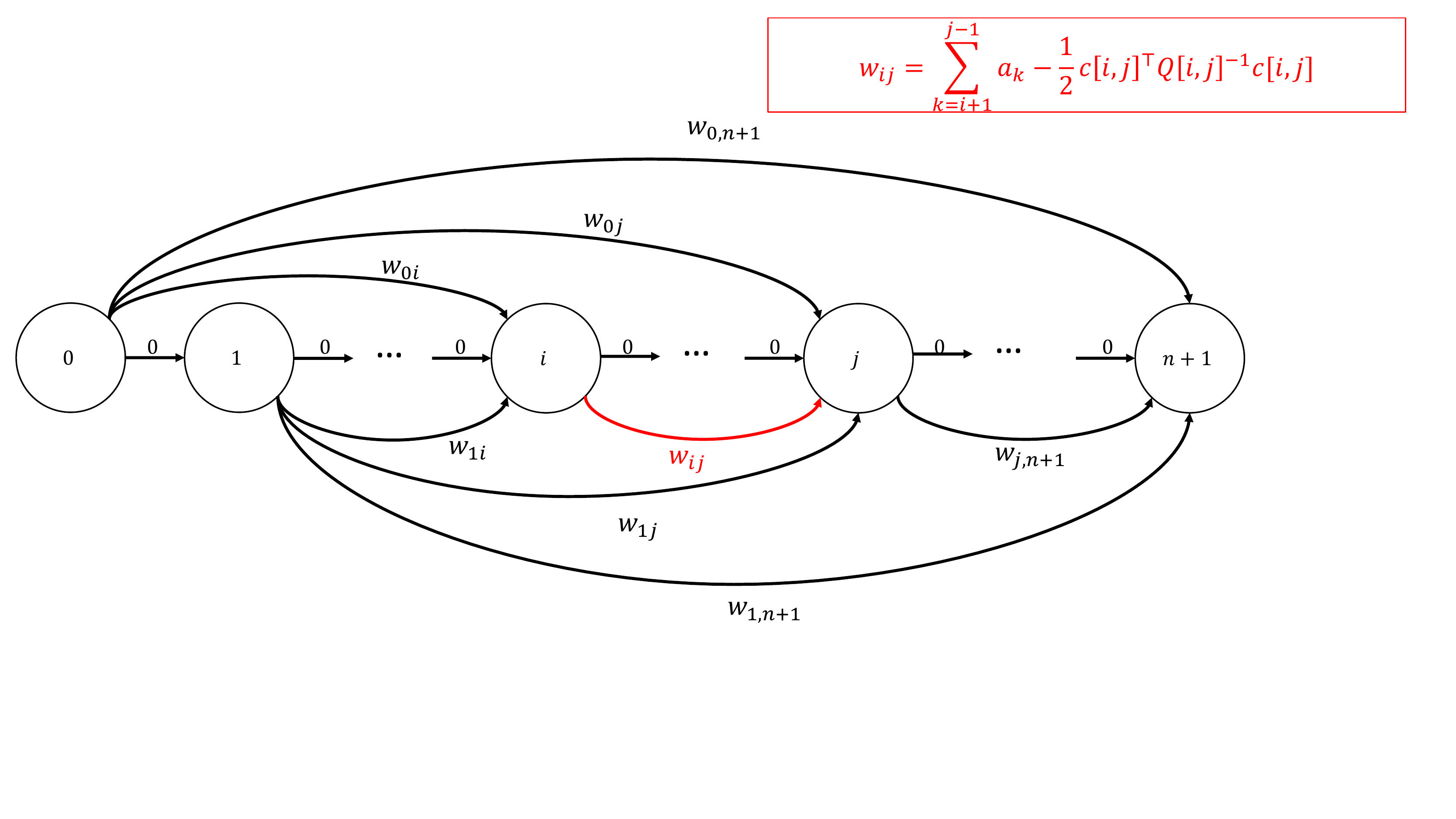}
\caption{Graphical depiction of \Gsp.}
\label{fig:SPgraph}
\end{figure}

\begin{proposition}\label{prop:tridiagoanl_equal_SP}
The length of any $(0,n+1)$-path $p=\{0,v_1,\ldots,v_\ell,n+1\}$ on \Gsp\ is the objective value of the solution of problem  \eqref{eq:tridiagonalMIQO} corresponding to setting $\bar z_v=0$ if and only if $v\in p$, and setting $x=x(\bar z)$.
\end{proposition}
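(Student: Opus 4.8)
The plan is to show that both quantities---the length of the path $p$ and the objective value of the indicated solution---reduce to the same sum $\sum_{k=0}^{\ell} w_{v_k,v_{k+1}}$, with the correspondence supplied directly by Lemma~\ref{lem:fixedZ}. First I would unpack the left-hand side. Since every arc of \Gsp\ goes from a smaller to a larger index (by the arc set $A$ in Definition~\ref{def:SPgraph}), any $(0,n+1)$-path necessarily visits its internal vertices in increasing order, so we may write $p=\{0=v_0,v_1,\dots,v_\ell,v_{\ell+1}=n+1\}$ with $0=v_0<v_1<\cdots<v_\ell<v_{\ell+1}=n+1$. The path then uses exactly the arcs $(v_k,v_{k+1})$ for $k=0,\dots,\ell$, so by definition its length is $\sum_{k=0}^{\ell} w_{v_k,v_{k+1}}$.

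Next I would identify the right-hand side with the same expression. The prescription $\bar z_v=0$ if and only if $v\in p$ means that the zero-indices of $\bar z$ are precisely $v_1,\dots,v_\ell$, recalling that $0$ and $n+1$ are auxiliary vertices of \Gsp\ and not coordinates of $z$. This is exactly the hypothesis of Lemma~\ref{lem:fixedZ} with the same breakpoints $v_1,\dots,v_\ell$. Hence, setting $x=x(\bar z)$, that lemma yields the objective value $\zeta(\bar z)=\sum_{k=0}^{\ell} w_{v_k,v_{k+1}}$. Comparing the two expressions then finishes the argument.

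The only point requiring care is the bookkeeping between the path vertices and the zero-pattern of $\bar z$---in particular, verifying that the ordering of the $v_k$ forced by the DAG matches the ordering assumed in Lemma~\ref{lem:fixedZ}, and that the two sentinel indices $v_0=0$ and $v_{\ell+1}=n+1$ are treated consistently on both sides. No genuine difficulty arises here, because the weights $w_{ij}$ were defined in \eqref{eq:definition_w_ij} precisely so that each arc encodes the optimal cost of the corresponding continuous subproblem; the proposition is thus essentially a restatement of Lemma~\ref{lem:fixedZ} in the language of paths on \Gsp.
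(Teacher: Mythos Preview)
Your proposal is correct and follows essentially the same approach as the paper: both arguments set up the bijection between internal path vertices and the zero-pattern of $\bar z$, then invoke Lemma~\ref{lem:fixedZ} to identify the path length $\sum_{k=0}^{\ell} w_{v_k,v_{k+1}}$ with the objective value $\zeta(\bar z)$. Your write-up is in fact more explicit than the paper's (which is only two sentences), particularly in justifying the increasing order of the $v_k$ from the DAG structure and in handling the sentinel indices.
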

\begin{proof}
There is a one-to-one correspondence between any path $p$ on \Gsp, and the solution $(x(\bar z),\bar z)$ where $\bar z$ is given as in Lemma~\ref{lem:fixedZ}, that is, $\bar z_{j}=0\Leftrightarrow j=v_k$ for some $k=1,\dots,\ell$. By construction, the length of the path is precisely the objective value associated with $(x(\bar z),\bar z)$. 
\end{proof}

Proposition~\ref{prop:tridiagoanl_equal_SP} immediately implies that the solution with smallest cost corresponds to a shortest path, which we state next as a corollary.

\begin{corollary}\label{cor:shortestPath}
An optimal solution $(x(z^*),z^*)$ of \eqref{eq:tridiagonalMIQO} can be found by computing a $(0,n+1)$-shortest path on \Gsp. Moreover, the solution found satisfies $z_i^*=0$ if and only if vertex $i$ is visited by the shortest path.
\end{corollary}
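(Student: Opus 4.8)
The plan is to reduce the mixed-integer problem to a pure minimization over the binary variables, and then identify that minimization with the shortest-path computation through the bijection established in Proposition~\ref{prop:tridiagoanl_equal_SP}. First I would observe that problem \eqref{eq:tridiagonalMIQO} can be solved by nesting the two minimizations, writing $\zeta = \min_{\bar z \in \{0,1\}^n} \zeta(\bar z)$, where $\zeta(\bar z)$ is the optimal value of the continuous restriction \eqref{eq:tridiagonalMIQOFixedZ}. This is valid because the objective is minimized jointly over $(x,z)$, and for each fixed $\bar z$ the inner minimization over $x$ is exactly \eqref{eq:tridiagonalMIQOFixedZ}, whose unique minimizer (using $Q\succ 0$) is $x(\bar z)$.

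Next I would invoke Lemma~\ref{lem:fixedZ}, which gives $\zeta(\bar z) = \sum_{k=0}^\ell w_{v_k,v_{k+1}}$, the length of the $(0,n+1)$-path on \Gsp\ whose interior vertices $v_1 < \dots < v_\ell$ are precisely the indices where $\bar z$ vanishes. Combined with Proposition~\ref{prop:tridiagoanl_equal_SP}, this shows that the map sending $\bar z$ to the path $\{0, v_1, \dots, v_\ell, n+1\}$ is a bijection from $\{0,1\}^n$ onto the set of $(0,n+1)$-paths on \Gsp, under which objective values and path lengths coincide. Minimizing over $\bar z$ on one side is therefore identical to minimizing path length on the other, so $\zeta$ equals the length of a shortest $(0,n+1)$-path, and any shortest path $p^\ast = \{0, v_1^\ast, \dots, v_\ell^\ast, n+1\}$ yields an optimal $(x(z^\ast), z^\ast)$, with $z^\ast$ read off from its interior vertices.

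The characterization $z_i^\ast = 0$ if and only if vertex $i$ is visited by the shortest path would then be immediate from this bijection: by construction $\bar z_j = 0$ exactly when $j \in \{v_1, \dots, v_\ell\}$, i.e.\ exactly when $j$ is an interior vertex of the corresponding path.

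I do not anticipate a genuine obstacle, since the corollary is essentially a repackaging of Proposition~\ref{prop:tridiagoanl_equal_SP} and Lemma~\ref{lem:fixedZ}. The only point requiring a line of care is confirming that the correspondence between binary vectors and paths is \emph{onto}---that every $(0,n+1)$-path, including the one with empty interior ($\ell = 0$, corresponding to $\bar z$ being the all-ones vector) arises from a feasible $\bar z$---so that the shortest path is not minimizing over a strict subset of feasible solutions. This follows directly from the arc set $A$ in Definition~\ref{def:SPgraph} containing every arc $(i,j)$ with $0 \le i < j \le n+1$, which allows any increasing sequence of interior vertices to be realized as a path.
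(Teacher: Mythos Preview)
Your proposal is correct and follows exactly the approach the paper intends; in fact the paper does not give a separate proof of this corollary at all, but simply states it as an immediate consequence of Proposition~\ref{prop:tridiagoanl_equal_SP}. Your write-up fills in the natural details (the nested minimization, the bijection, and the surjectivity check) that the paper leaves implicit.
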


\subsection{Algorithm}\label{sec:efficient_alg}

Observe that since graph \Gsp\ is acyclic, a shortest path can be directly computed by a labeling algorithm in complexity linear in the number of arcs $|A|$, which in this case is $\mathcal{O}(n^2)$. Moreover, computing the cost $w_{ij}$ of each arc requires solving the system of equalities $Q[i,j]x[i,j]=-c[i,j]$, which can be done in $\mathcal{O}(n)$ time using Thomas algorithm~\cite[Chapter 9.4]{datta2010numerical}. Thus, the overall complexity of this direct method is $\mathcal{O}(n^3)$ time, and it requires $\mathcal{O}(n^2)$ memory to store graph \Gsp. We now show that this complexity can in fact be improved.

\begin{algorithm}[h]
	\caption{Algorithm for problem \eqref{eq:tridiagonalMIQO}}
	\label{alg:SP} \small
	\begin{algorithmic}[1]
		\renewcommand{\algorithmicrequire}{\textbf{Input:}}
		\renewcommand{\algorithmicensure}{\textbf{Output:}}
		\Require $a,c\in \R^n$, $Q\in R^{n\times n}$ tridiagonal positive definite.
		\Ensure Optimal objective value $\zeta$ of \eqref{eq:tridiagonalMIQO}.
		
		\State $\ell_0\leftarrow 0$
		\State $\ell_k\leftarrow \infty$ for $k=1,\dots,n+1$ \Comment{Shortest path labels, initially $\infty$}
		
		\For{$i= 0, \ldots, n$}\label{line:outer}
		\State $\ell_{i+1}\leftarrow \min\{\ell_{i+1},\ell_i\}$ \Comment{$w_{i,i+1}=0$}
		\State $\bar c\leftarrow 0$, $\bar q\leftarrow \infty$ \Comment{Stores linear and quadratic coefficients}
		\State $\bar w\leftarrow 0$ 
		\For{$j=i+2, \ldots, n+1$}\label{line:inner}
		\State $\bar c \leftarrow c_{j-1}-\frac{Q_{j-2,j-1}}{\bar q}\bar c$ \Comment{Assume $Q_{0,1}=0$}
		\State $\bar q \leftarrow Q_{j-1,j-1}-\frac{\left(Q_{j-2,j-1}\right)^2}{\bar q}$ \Comment{Assume $Q_{0,1}=0$}
		\State $\bar w\leftarrow \bar w -\frac{1}{2}\frac{\bar c^2}{\bar q}+a_{j-1}$ \Comment{$\bar w=w_{ij}$}\label{line:barK}
		\State $\ell_{j}=\min\{\ell_j, \ell_i+\bar w\}$\label{line:labelUpdate}
		\EndFor
		\EndFor  \label{line:endFor}
		\State \Return $\ell_{n+1}$
		
	\end{algorithmic}
\end{algorithm}

\begin{proposition}\label{prop:complexity}
Algorithm~\ref{alg:SP} solves problem \eqref{eq:tridiagonalMIQO} in $\mathcal{O}(n^2)$ time and using $\mathcal{O}(n)$ memory. 
\end{proposition}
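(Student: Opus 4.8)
The plan is to establish the two assertions of the proposition separately: (i) \emph{correctness}, that Algorithm~\ref{alg:SP} returns $\ell_{n+1}=\zeta$, and (ii) the stated \emph{time and space bounds}. For correctness, the main reduction is already in place: by Corollary~\ref{cor:shortestPath} it suffices to show that $\ell_{n+1}$ equals the length of a shortest $(0,n+1)$-path on $\Gsp$. The labeling update on line~\ref{line:labelUpdate} is the standard relaxation $\ell_j\leftarrow\min\{\ell_j,\ell_i+w_{ij}\}$, and the special update before the inner loop relaxes the weight-$0$ arc $(i,i+1)$; hence the whole procedure is a single-pass shortest-path labeling over the DAG $\Gsp$. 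The only two things left to verify are therefore that (a) the scalar $\bar w$ equals $w_{ij}$ (as claimed by the comment on line~\ref{line:barK}) each time line~\ref{line:labelUpdate} is reached, and (b) the vertices are relaxed in a valid topological order.

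The heart of the argument — and the step I expect to be the main obstacle — is claim (a), i.e.\ that the constant-time recurrences for $\bar c$, $\bar q$, $\bar w$ correctly evaluate the closed form \eqref{eq:definition_w_ij} incrementally in $j$. I would prove this by viewing the inner loop as an in-place $LDL^\top$ (Thomas) factorization of the nested tridiagonal submatrices $Q[i,i+2], Q[i,i+3], \dots$, which for fixed $i$ are leading principal submatrices of one another and hence share factorizations. Concretely, fix $i$ and eliminate the continuous variables $x_{i+1},x_{i+2},\dots$ one at a time from the quadratic defining $w_{ij}$; because $Q$ is tridiagonal, after eliminating $x_{i+1},\dots,x_{m-1}$ the objective reduces to a univariate quadratic $\tfrac12 q_m x_m^2+\tilde c_m x_m$ plus a constant, and a one-line partial-minimization (Schur-complement) computation gives the recurrences $q_m=Q_{mm}-Q_{m-1,m}^2/q_{m-1}$ and $\tilde c_m=c_m-(Q_{m-1,m}/q_{m-1})\tilde c_{m-1}$, matching the updates of $\bar q$ and $\bar c$ on the two lines preceding line~\ref{line:barK} (the initialization $\bar q=\infty$ supplies the correct base case, and the convention $Q_{0,1}=0$ handles the boundary $i=0$). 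Since eliminating the final variable $x_{j-1}$ contributes exactly $-\tfrac12\tilde c_{j-1}^2/q_{j-1}$ to the optimum, I would then show by induction on $j$ that after the update on line~\ref{line:barK} one has $\bar w=\sum_{k=i+1}^{j-1}\bigl(a_k-\tfrac12\tilde c_k^2/q_k\bigr)=w_{ij}$, precisely the value in \eqref{eq:definition_w_ij}. Here the hypothesis $Q\succ 0$ is essential: every principal submatrix $Q[i,j]$ is then positive definite, so all pivots $q_k$ are strictly positive and the divisions are well defined, and the subproblem minima are attained.

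For claim (b), I would note that every arc of $\Gsp$ points from a lower to a higher index, so processing $i=0,1,\dots,n$ in increasing order (line~\ref{line:outer}) is a topological order; one checks that all arcs into a vertex $i+1$ — namely $(i,i+1)$ and $(i',i+1)$ for $i'<i$ — are relaxed no later than iteration $i$, so $\ell_{i+1}$ is final before its outgoing arcs are relaxed. Combining (a) and (b) with the correctness of shortest-path labeling in topological order on a DAG gives $\ell_{n+1}=\zeta$ via Proposition~\ref{prop:tridiagoanl_equal_SP} and Corollary~\ref{cor:shortestPath}. Finally, for the complexity I would simply count: the inner loop on line~\ref{line:inner} executes $n-i$ times for each $i$, for a total of $\sum_{i=0}^{n}(n-i)=\mathcal{O}(n^2)$ constant-work iterations, giving $\mathcal{O}(n^2)$ time; and the algorithm stores only the $\mathcal{O}(n)$ labels $\ell_0,\dots,\ell_{n+1}$, the three scalars $\bar c,\bar q,\bar w$, and the $\mathcal{O}(n)$ nonzero entries of the tridiagonal $Q$ together with $a,c$, for $\mathcal{O}(n)$ memory. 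Crucially, the incremental factorization is what removes a factor of $n$ relative to the naive $\mathcal{O}(n^3)$ scheme that recomputes each $w_{ij}$ from scratch.
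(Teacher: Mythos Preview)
Your proposal is correct and follows essentially the same route as the paper: the paper also argues correctness by first showing (via a one-variable elimination/Schur-complement step, its Lemma~\ref{lem:elimination}) that the recurrences for $\bar c,\bar q,\bar w$ produce $\bar w=w_{ij}$ at line~\ref{line:barK}, then observing that line~\ref{line:labelUpdate} is the standard topological-order label update on $\Gsp$, and finally counting loop iterations and stored scalars for the $\mathcal{O}(n^2)$/$\mathcal{O}(n)$ bounds. Your explicit use of $Q\succ 0$ to guarantee positive pivots is a welcome detail the paper leaves implicit.
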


Observe that since Algorithm~\ref{alg:SP} has two nested loops (lines~\ref{line:outer} and \ref{line:inner}) and each operation inside the loop can be done in $\mathcal{O}(1)$, the stated time complexity of $\mathcal{O}(n^2)$ follows. Moreover, Algorithm~\ref{alg:SP} only uses variables $\bar c, \bar q,\bar k,\ell_0,\dots,\ell_{n+1}$, thus the stated memory complexity of $\mathcal{O}(n)$ follows. Therefore, to prove Proposition~\ref{prop:complexity}, it suffices to show that Algorithm~\ref{alg:SP} indeed solves problem \eqref{eq:tridiagonalMIQO}. 

Algorithm~\ref{alg:SP} is based on the forward elimination of variables. Consider the optimization problem \eqref{eq:definition_w_ij}, which we repeat for convenience 
\begin{align}\label{eq:repeated}
w_{ij}&=\sum_{k=i+1}^{j-1}a_k+\min_{x[i,j]\in\mathbb{R}^{j-i-1}}\;\Big\{\sum_{k=i+1}^{j-1} c_kx_k\notag\\
&\qquad+\frac{1}{2}\sum_{k=i+1}^{j-1}Q_{kk}{x_k}^2+\sum_{k=i+1}^{j-2}{Q_{k,k+1}}x_kx_{k+1}\Big\}.
\end{align}
Lemma~\ref{lem:elimination} shows how we can eliminate the first variable, that is, variable $x_{i+1}$,  in \eqref{eq:repeated}.

\begin{lemma}\label{lem:elimination}
	If $j=i+2$, then $w_{ij}=a_{i+1}-\frac{c_{i+1}^2}{2Q_{i+1,i+1}}$. Otherwise, 
	\begin{align*}w_{ij}=&\;a_{i+1}-\frac{c_{i+1}^2}{2Q_{i+1,i+1}}+\sum_{k=i+2}^{j-1}a_k\\
	&+\min_{x[i+1,j]\in\mathbb{R}^{j-i-2}}\;\left\{\sum_{k=i+2}^{j-1} \tilde c_kx_k+\frac{1}{2}\sum_{k=i+2}^{j-1}\tilde Q_{kk}{x_k}^2+\sum_{k=i+2}^{j-2}{Q_{k,k+1}}x_kx_{k+1}\right\},\end{align*}
	where $\tilde c_{i+2}=c_{i+2}-c_{i+1}\frac{Q_{i+1,i+2}}{Q_{i+1,i+1}}$, $\tilde c_k=c_k$ for $k>i+2$, $ \tilde Q_{i+2,i+2}=Q_{i+2,i+2}-\frac{Q_{i+1,i+2}^2}{Q_{i+1,i+1}}$, and $\tilde Q_{kk}=Q_{kk}$ for $k>i+2$. 
\end{lemma}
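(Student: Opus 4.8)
The plan is to prove the lemma by \emph{partial minimization}: since the objective in \eqref{eq:repeated} is minimized jointly over $x[i,j]$, I would first minimize over the single variable $x_{i+1}$ with all remaining variables held fixed, and then minimize over $x[i+1,j]$. The identity $\inf_{(x',x'')}f=\inf_{x''}\inf_{x'}f$ holds for any function, so no convexity argument is needed for the interchange itself; I only need $Q_{i+1,i+1}>0$ so that the inner univariate minimization attains a finite minimum, and this holds because $Q\succ 0$ forces every diagonal entry $Q_{kk}=e_k^\top Q e_k$ to be positive.

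The key structural observation is that, because $Q$ is tridiagonal (the path structure), the variable $x_{i+1}$ enters the objective only through its own linear term $c_{i+1}x_{i+1}$, its own quadratic term $\tfrac12 Q_{i+1,i+1}x_{i+1}^2$, and the single cross term $Q_{i+1,i+2}x_{i+1}x_{i+2}$ coupling it to $x_{i+2}$. Collecting these, the inner problem is
\[
\min_{x_{i+1}}\Big\{\tfrac12 Q_{i+1,i+1}x_{i+1}^2+\big(c_{i+1}+Q_{i+1,i+2}x_{i+2}\big)x_{i+1}\Big\},
\]
a univariate convex quadratic whose minimizer is $-\big(c_{i+1}+Q_{i+1,i+2}x_{i+2}\big)/Q_{i+1,i+1}$ and whose optimal value is $-\big(c_{i+1}+Q_{i+1,i+2}x_{i+2}\big)^2/(2Q_{i+1,i+1})$. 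For the base case $j=i+2$ there is no interior neighbor $x_{i+2}$, so the cross-term sum is empty, the inner value reduces to $-c_{i+1}^2/(2Q_{i+1,i+1})$, and together with the lone term $a_{i+1}$ this yields the claimed closed form immediately.

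For $j>i+2$ I would expand the square into a constant, a term linear in $x_{i+2}$, and a term quadratic in $x_{i+2}$, and add it back to the terms of \eqref{eq:repeated} that do not involve $x_{i+1}$. The remaining work is bookkeeping: the constant $-c_{i+1}^2/(2Q_{i+1,i+1})$ together with the split $\sum_{k=i+1}^{j-1}a_k=a_{i+1}+\sum_{k=i+2}^{j-1}a_k$ produces the leading terms; the linear-in-$x_{i+2}$ piece $-\big(c_{i+1}Q_{i+1,i+2}/Q_{i+1,i+1}\big)x_{i+2}$ merges with $c_{i+2}x_{i+2}$ to give $\tilde c_{i+2}$; and the quadratic-in-$x_{i+2}$ piece $-\big(Q_{i+1,i+2}^2/(2Q_{i+1,i+1})\big)x_{i+2}^2$ merges with $\tfrac12 Q_{i+2,i+2}x_{i+2}^2$ to give $\tilde Q_{i+2,i+2}$. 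Every coefficient with index strictly greater than $i+2$, and every remaining cross term $Q_{k,k+1}$ with $k\ge i+2$, is untouched, which accounts for $\tilde c_k=c_k$ and $\tilde Q_{kk}=Q_{kk}$ for $k>i+2$.

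I do not anticipate a genuine obstacle, since the tridiagonal coupling is exactly what makes the elimination local and exact. The only place a sign or a factor of $\tfrac12$ could slip is in expanding the square and matching it against the $\tfrac12$ multiplying the quadratic terms: I would check carefully that the cross term $2c_{i+1}Q_{i+1,i+2}x_{i+2}$ in the expansion, after division by $2Q_{i+1,i+1}$, reproduces $\tilde c_{i+2}$ with no stray factor of $\tfrac12$, and that the $x_{i+2}^2$ coefficient lands cleanly inside the $\tfrac12(\cdot)$ to form $\tilde Q_{i+2,i+2}$. This verification is the full content of the proof.
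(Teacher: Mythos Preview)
Your proposal is correct and follows essentially the same approach as the paper: the paper also eliminates $x_{i+1}$ by writing its KKT condition $x_{i+1}=-(c_{i+1}+Q_{i+1,i+2}x_{i+2})/Q_{i+1,i+1}$, substituting back, and simplifying term by term to obtain $\tilde c_{i+2}$ and $\tilde Q_{i+2,i+2}$. Your framing as partial minimization and the paper's framing via the first-order condition are the same computation, and your explicit remark that $Q_{i+1,i+1}>0$ follows from $Q\succ 0$ is a small clarification the paper leaves implicit.
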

\begin{proof}
	If $j=i+2$ then the optimal solution of $\min_{x_{i+1}\in\mathbb{R}} \{a_{i+1}+c_{i+1}x_{i+1}+\frac{1}{2}Q_{i+1,i+1}x_{i+1}^2\}$ is given by $x_{i+1}^*=-c_{i+1}/Q_{i+1,i+1}$, with objective value $a_{i+1}-\frac{c_{i+1}^2}{2Q_{i+1,i+1}}$. Otherwise, from the KKT conditions corresponding to $x_{i+1}$, we find that 
	$$c_{i+1}+Q_{i+1,i+1}x_{i+1}+Q_{i+1,i+2}x_{i+2}=0\implies x_{i+1}=\frac{-c_{i+1}-Q_{i+1,i+2}x_{i+2}}{Q_{i+1,i+1}}.$$
	Substituting out $x_{i+1}$ in the objective value, we obtain the equivalent form 
	{\small
	\begin{align*}
	&\sum_{k=i+1}^{j-1}a_k-\frac{c_{i+1}^2}{Q_{i+1,i+1}}-\frac{c_{i+1}Q_{i+1,i+2}x_{i+2}}{Q_{i+1,i+1}}+\sum_{k=i+2}^{j-1} c_kx_k+\frac{1}{2}\frac{\left(-c_{i+1}-Q_{i+1,i+2}x_{i+2}\right)^2}{Q_{i+1,i+1}}\\&\qquad+\frac{1}{2}\sum_{k=i+2}^{j-1}Q_{kk}x_k^2
	-\frac{Q_{i+1,i+2}}{Q_{i+1,i+1}}c_{i+1}x_{i+2}-\frac{\left(Q_{i+1,i+2}x_{i+2}\right)^2}{Q_{i+1,i+1}}+\sum_{k=i+2}^{j-2}Q_{k,k+1}x_kx_{k+1}\\
	=&\sum_{k=i+1}^{j-1}a_k-\frac{c_{i+1}^2}{2Q_{i+1,i+1}}+\left(c_{i+2}-c_{i+1}\frac{Q_{i+1,i+2}}{Q_{i+1,i+1}}\right)x_{i+2}+\sum_{k=i+3}^{j-1}c_kx_k\\
	&\qquad+\frac{1}{2}\left(Q_{i+2,i+2}-\frac{Q_{i+1,i+2}^2}{Q_{i+1,i+1}}\right)x_{i+2}^2+\sum_{k=i+3}^nQ_{kk}x_k^2+\sum_{k=i+2}^{j-2}Q_{k,k+1}x_kx_{k+1}. 
	\end{align*} 

	} 
\end{proof}
The critical observation from Lemma~\ref{lem:elimination} is that, after elimination of the first variable, only the linear coefficient $c_{i+2}$ and diagonal term $Q_{i+2,i+2}$ need to be updated. From Lemma~\ref{lem:elimination}, we can deduce the correctness of Algorithm~\ref{alg:SP}, as stated in Proposition~\ref{prop:arcCosts} and Corollary~\ref{corr:correct} below.

\begin{proposition}\label{prop:arcCosts}
Given any pair of indices $i$ and $j$ corresponding to the outer (line~\ref{line:outer}) and inner (line~\ref{line:inner}) loops of Algorithm~\ref{alg:SP}, respectively, $\bar w=w_{ij}$ in line~\ref{line:barK}.
\end{proposition}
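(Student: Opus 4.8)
The plan is to prove the proposition by induction, using Lemma~\ref{lem:elimination} as the engine for the inductive step. The guiding intuition is that the inner loop of Algorithm~\ref{alg:SP} performs the forward elimination (Thomas sweep) of the variables $x_{i+1},x_{i+2},\dots,x_{j-1}$ one at a time, and that each elimination peels off exactly one closed-form constant from the objective while updating the coefficients of the next variable. I would first introduce the forward-eliminated coefficients, defined by $\tilde c_{i+1}=c_{i+1}$, $\tilde Q_{i+1,i+1}=Q_{i+1,i+1}$, and for $k\geq i+2$,
\[
\tilde c_k = c_k - \frac{Q_{k-1,k}}{\tilde Q_{k-1,k-1}}\tilde c_{k-1}, \qquad \tilde Q_{kk} = Q_{kk} - \frac{Q_{k-1,k}^2}{\tilde Q_{k-1,k-1}}.
\]
Since $Q\succ 0$, every principal submatrix $Q[i,j]$ is positive definite, so all pivots $\tilde Q_{kk}$ are strictly positive and every division above (and in the algorithm) is well defined.

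The first step is to show, by repeatedly invoking Lemma~\ref{lem:elimination}, that
\[
w_{ij} = \sum_{k=i+1}^{j-1}\left(a_k - \frac{\tilde c_k^2}{2\tilde Q_{kk}}\right).
\]
I would argue this by induction on the number of variables $j-i-1$. The base case $j=i+2$ is precisely the first case of Lemma~\ref{lem:elimination}. For the inductive step, Lemma~\ref{lem:elimination} eliminates $x_{i+1}$, peeling off the constant $a_{i+1}-c_{i+1}^2/(2Q_{i+1,i+1}) = a_{i+1}-\tilde c_{i+1}^2/(2\tilde Q_{i+1,i+1})$ and leaving a problem of the same tridiagonal form in $x_{i+2},\dots,x_{j-1}$ whose first linear and diagonal coefficients are exactly $\tilde c_{i+2}$ and $\tilde Q_{i+2,i+2}$. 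The key observation is that one elimination step leaves the off-diagonal entries $Q_{k,k+1}$ and all coefficients beyond the second untouched, so the coefficients fed to successive eliminations obey exactly the recurrence defining $\tilde c_k,\tilde Q_{kk}$; applying the induction hypothesis to the reduced $(j-i-2)$-variable problem then yields the claimed sum.

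The second step is to show, by induction on $j$ in the inner loop, that after the updates for a given $j$ the algorithm maintains $\bar c=\tilde c_{j-1}$ and $\bar q=\tilde Q_{j-1,j-1}$. At $j=i+2$, the initialization $\bar c=0$, $\bar q=\infty$ together with the convention $Q_{0,1}=0$ makes the updates yield $\bar c=c_{i+1}=\tilde c_{i+1}$ and $\bar q=Q_{i+1,i+1}=\tilde Q_{i+1,i+1}$; for larger $j$, the update formulas for $\bar c$ and $\bar q$ are literally the recurrence above with $k=j-1$. Granting this, the accumulation $\bar w\leftarrow \bar w-\tfrac12\bar c^2/\bar q+a_{j-1}$ adds precisely $a_{j-1}-\tilde c_{j-1}^2/(2\tilde Q_{j-1,j-1})$ at iteration $j$, so after the iteration indexed by $j$ we obtain $\bar w=\sum_{k=i+1}^{j-1}\bigl(a_k-\tilde c_k^2/(2\tilde Q_{kk})\bigr)=w_{ij}$, which is the desired identity at line~\ref{line:barK}.

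I expect the main obstacle to be bookkeeping rather than anything conceptual: one must carefully align the running variables $\bar c,\bar q$ (which carry no explicit index in the pseudocode) with the indexed quantities $\tilde c_{j-1},\tilde Q_{j-1,j-1}$, and verify that the reduced problem produced by one application of Lemma~\ref{lem:elimination} feeds the next elimination with exactly the coefficients prescribed by the forward-elimination recurrence — in particular that the off-diagonal terms are never modified. Handling the boundary conventions ($\bar q=\infty$ and $Q_{0,1}=0$) cleanly in the base case is the only other delicate point.
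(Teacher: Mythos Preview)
Your proposal is correct and follows essentially the same approach as the paper: both arguments establish the base case $j=i+2$ directly from Lemma~\ref{lem:elimination} and then argue inductively that successive inner-loop iterations implement the recursive variable eliminations of Lemma~\ref{lem:elimination}, with $\bar c,\bar q$ tracking the updated coefficients $\tilde c_{j-1},\tilde Q_{j-1,j-1}$. Your version is simply a fuller, more explicit write-up of the induction that the paper sketches in two sentences.
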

\begin{proof}
If $j=i+2$, then $\bar c=c_{i+1}$, $\bar q=Q_{i+1,i+1}$, $\bar w=a_{i+1}-\frac{c_{i+1}^2}{2Q_{i+1,i+1}}$ and the conclusion follows from Lemma~\ref{lem:elimination}. If $j=i+3$, then $\bar c=\tilde c_{i+2}$, $\bar q=\tilde Q_{i+2,i+2}$, and the conclusion follows from a recursive application of Lemma~\ref{lem:elimination} to the reduced problem, after the elimination of variable $x_{i+1}$. Similarly, cases $j>i+3$ follow from recursive applications of Lemma~\ref{lem:elimination}. 
\end{proof}

\begin{corollary}\label{corr:correct}
At the end of Algorithm~\ref{alg:SP}, label $\ell_k$ corresponds to the length of the shortest $(0,k)$-path. In particular, Algorithm~\ref{alg:SP} returns the length of the shortest $(0,n+1)$ path.
\end{corollary}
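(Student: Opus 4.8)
The plan is to reduce the statement to the textbook correctness of a topological-order labeling scheme for shortest paths on a DAG, using Proposition~\ref{prop:arcCosts} to certify that the weights appearing in the label updates are the true arc costs. By that proposition, the quantity $\bar w$ used in line~\ref{line:labelUpdate} equals $w_{ij}$ for the corresponding loop indices $i$ and $j$, and the auxiliary update $\ell_{i+1}\leftarrow\min\{\ell_{i+1},\ell_i\}$ realizes the convention $w_{i,i+1}=0$. Hence every assignment $\ell_j\leftarrow\min\{\ell_j,\ell_i+\bar w\}$ is exactly the relaxation of arc $(i,j)$ of \Gsp\ with its correct weight, and Algorithm~\ref{alg:SP} is precisely the relaxation procedure whose outer loop (line~\ref{line:outer}) visits the arc tails $i=0,1,\dots,n$ in increasing order. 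Since all arcs of \Gsp\ point from a smaller vertex to a larger one, this increasing order is a topological ordering of the DAG, which is the structural fact that makes a single pass suffice.

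First I would record two invariants, both immediate from the form of the relaxations. (i) The labels are monotonically nonincreasing, and whenever $\ell_k<\infty$ the value $\ell_k$ is the length of some $(0,k)$-path in \Gsp; consequently $\ell_k$ is at all times an upper bound on the shortest $(0,k)$-path length $d_k$. (ii) The outer iteration with index $i$ modifies only the labels $\ell_{i+1},\dots,\ell_{n+1}$ (the inner loop of line~\ref{line:inner} touches indices $j\ge i+2$, and the preliminary update handles $j=i+1$); in particular $\ell_k$ is never modified during or after the iteration $i=k$, so $\ell_k$ is frozen once iteration $i=k-1$ completes. The main step is then an induction on $k$ showing that, at the instant all outer iterations with index $<k$ have finished, $\ell_k=d_k$. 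The base case $k=0$ holds because $\ell_0=0=d_0$ before any iteration. For the inductive step, fix a shortest $(0,k)$-path and let $(m,k)$ be its final arc, so $m<k$ and its prefix is a shortest $(0,m)$-path of length $d_m$. By the induction hypothesis $\ell_m=d_m$ when iteration $i=m$ begins, and by invariant (ii) $\ell_m$ stays equal to $d_m$ throughout that iteration; thus when arc $(m,k)$ is relaxed during iteration $m$ we get $\ell_k\le d_m+w_{mk}=d_k$. Combined with invariant (i), which forbids $\ell_k$ from ever dropping below $d_k$, this yields $\ell_k=d_k$ by the time all iterations $i<k$ have completed.

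Specializing to $k=n+1$ gives the conclusion: every arc into $n+1$ has tail $m\le n$, so the last-arc argument applies verbatim and, after the final outer iteration $i=n$, we have $\ell_{n+1}=d_{n+1}$, the shortest $(0,n+1)$-path length returned in the last line. I do not expect a genuine obstacle here, since Proposition~\ref{prop:arcCosts} has already dispatched the only nontrivial ingredient—the correctness of the incrementally maintained weights. The two points that merely require care are (a) recognizing that the increasing vertex order is a topological order, so that each tail label is finalized before it is used, and (b) verifying that the tail label $\ell_m$ is both correct \emph{and} frozen at the moment arc $(m,k)$ is relaxed, which is exactly what invariant (ii) guarantees.
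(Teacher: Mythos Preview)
Your proposal is correct and follows essentially the same approach as the paper, which simply states that line~\ref{line:labelUpdate} is the shortest-path label update performed in the natural topological order of \Gsp. You have merely spelled out in detail the standard induction that underlies this one-line appeal, invoking Proposition~\ref{prop:arcCosts} exactly as the paper intends.
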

\begin{proof}
The proof follows due to the fact that line~\ref{line:labelUpdate} corresponds to the update of the shortest path labels using the natural topological order of \Gsp. 
\end{proof}

\begin{remark}
Algorithm~\ref{alg:SP} can be easily modified to recover, not only the optimal objective value, but also the optimal solution. This can be done by maintaining the list of predecessors $p$ of each node (initially, $p_k\leftarrow \emptyset$) throughout the algorithm; if label $\ell_j$ is updated at line~\ref{line:labelUpdate}, then set $p_j\leftarrow i$. The solution can then be recovered by backtracking, starting from $p_{n+1}$.\hfill \hfill \qed
\end{remark}

\subsection{Convexification}\label{sec:convexHull}

The polynomial time solvability of problem \eqref{eq:tridiagonalMIQO} suggests that it may be possible to find a tractable representation of the convex hull of $X$ when $Q$ is tridiagonal. Moreover, given a shortest path  (or, equivalently, dynamic programming) formulation of a pure integer linear optimization problem, it is often possible to construct an extended formulation of the convex hull of the feasible region, e.g., see \cite{lozano2020consistent,em87,w02,GK13}. 
There have been recent efforts to generalize such methods to nonlinear integer problems \cite{davarnia2021outer}, but few authors have considered using such convexification techniques in nonlinear mixed-integer problems as the ones considered here. Next, using lifting \cite{richard2010lifting} and the equivalence of optimization over $X$ to a shortest path problem proved in Section~\ref{sec:shortestPath}, we derive a compact extended formulation for $\text{cl conv}(X)$ in the tridiagonal case. 

The lifting approach used here is similar to the approach used recently in \cite{atamturk2020supermodularity,gomez2021outlier}: the continuous variables are projected out first, then a convex hull description is obtained for the resulting projection in the space of discrete variables, and finally the description is lifted back to the space of continuous variables. Unlike \cite{atamturk2020supermodularity,gomez2021outlier}, the convexification in the discrete space is obtained using an extended formulation (instead of finding the description in the original space of variables). 

In particular, to construct valid inequalities for $X$
 we observe that for any $(x,z,t)\in X$ and any $\theta\in \R^n$, 
\begin{align}
&\frac{1}{2}t\geq \frac{1}{2}x^\top Qx\notag\\
\Leftrightarrow &\frac{1}{2}t-\theta^\top x\geq -\theta^\top x+\frac{1}{2}x^\top Qx\notag\\
\implies &\frac{1}{2}t-\theta^\top x\geq g_\theta(z)\defeq \min_{x}\left\{-\theta^\top x+\frac{1}{2}x^\top Qx:x_i(1-z_i)=0,\; i\in N\right\}.\label{eq:valid0}
\end{align}

We now discuss the convexification in the space of the discrete variables, that is, describing the convex envelope of function $g_\theta$.

\subsubsection{Convexification of the projection in the $z$ space}

We study  the epigraph of function $g_\theta$, given by 
$$G_\theta=\left\{(z,s)\in  \{0,1\}^n\times \R: s\geq g_\theta(z)\right\}.$$ 
Note that $\conv(G_\theta)$ is polyhedral. Using the results from Section~\ref{sec:shortestPath}, we now give an extended formulation for $G_\theta$. Given two indices $0\leq i< j\leq n+1$ and vector $\theta\in \R^{n}$, define the function
\begin{align*}
    g_{ij}(\theta)&\defeq-\frac{1}{2}\theta[i,j]^\top Q[i,j]^{-1}\theta[i,j],
\end{align*}
Observe that $g_{ij}(\theta)=g_{ij}(-\theta)$ for any $\theta\in \R^n$, and that weights $w_{ij}$ defined in \eqref{eq:definition_w_ij} are given by $w_{ij}=g_{ij}(-c[i,j])+\sum_{k=i+1}^{j-1}a_k$.
Moreover, for $0\leq i<j\leq n+1$, consider variables $u_{ij}$ intuitively defined as ``$u_{ij}=1$ if and only if arc $(i,j)$ is used in a shortest $(0,n+1)$ path in \Gsp."  Consider the constraints
\begin{subequations}\label{eq:SP_extended}
\begin{align}
&\sum_{i=0}^n\sum_{j=i+1}^{n+1}g_{ij}(\theta)u_{ij}\leq s\label{eq:SP_extended_epi}\\
&\sum_{i=0}^{k-1}u_{ik}-\sum_{j=k+1}^{n+1}u_{kj}=\begin{cases}-1&\text{if }k=0\\
1&\text{if }k=n+1\\
0&\text{otherwise.}\end{cases}&k=0,\dots,n+1\label{eq:SP_extended_flow}\\
&\sum_{i=0}^{k-1}u_{ik}=1-z_k&k=1,\dots,n.\\
&u\geq 0, 0\leq z\leq 1. \label{eq:SP_extended_bounds}
\end{align}
\end{subequations}

\begin{proposition}\label{prop:extendedFormulation}
If $Q$ is tridiagonal, then the system \eqref{eq:SP_extended} is an extended formulation of $\conv(G_\theta)$ for any $\theta\in \R^n$.
\end{proposition}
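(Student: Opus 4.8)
The plan is to show that the projection of the polyhedron defined by \eqref{eq:SP_extended} onto the $(z,s)$-space is exactly $\conv(G_\theta)$, by combining the integrality of the underlying network-flow polytope with Lemma~\ref{lem:fixedZ}. First I would observe that $g_\theta$ is precisely the optimal value of a problem of the form \eqref{eq:tridiagonalMIQOFixedZ} with linear coefficient $c=-\theta$ and $a=0$. Hence Lemma~\ref{lem:fixedZ} applies verbatim: if $0=v_0<v_1<\dots<v_\ell<v_{\ell+1}=n+1$ are the indices with $z_{v_k}=0$, then the relevant arc weights reduce to $g_{v_k,v_{k+1}}(\theta)$, so that $g_\theta(z)=\sum_{k=0}^\ell g_{v_k,v_{k+1}}(\theta)$. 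This identity is what links the objective coefficients in \eqref{eq:SP_extended_epi} to the value of $g_\theta$.

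Next I would analyze the flow polytope $U$ carved out by \eqref{eq:SP_extended_flow} together with $u\ge 0$. Since its constraint matrix is the node-arc incidence matrix of the DAG \Gsp\ (totally unimodular) and the right-hand side is integral, $U$ is an integral polytope; it is bounded because a unit $(0,n+1)$-flow on an acyclic graph assigns each arc a value in $[0,1]$. As \Gsp\ has no cycles, its vertices are exactly the incidence vectors $\chi^p$ of $(0,n+1)$-paths $p$. Under the substitution $\sum_{i<k}u_{ik}=1-z_k$, each vertex maps to the point $z^p\in\{0,1\}^n$ with $z^p_k=0$ iff $k\in p$, and by the previous paragraph the objective \eqref{eq:SP_extended_epi} evaluated at $\chi^p$ equals $g_\theta(z^p)$; thus $(z^p,g_\theta(z^p))\in G_\theta$.

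With these pieces in place the two inclusions follow. For the inclusion of $\conv(G_\theta)$ in the projection, I would take any $(z,s)\in G_\theta$ and set $u=\chi^p$ for the path $p$ visiting exactly $\{k:z_k=0\}$; this $u$ satisfies \eqref{eq:SP_extended_flow}--\eqref{eq:SP_extended_bounds}, and \eqref{eq:SP_extended_epi} holds because $s\ge g_\theta(z)=\sum_{ij}g_{ij}(\theta)\chi^p_{ij}$. Since the projection is convex and contains $G_\theta$, it contains $\conv(G_\theta)$. For the reverse inclusion, given a feasible $(u,z,s)$ I would write $u=\sum_p\lambda_p\chi^p$ as a convex combination of the path-vertices; then $z=\sum_p\lambda_p z^p$ and $\sum_{ij}g_{ij}(\theta)u_{ij}=\sum_p\lambda_p g_\theta(z^p)\le s$. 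The point $(z,\sum_p\lambda_p g_\theta(z^p))$ is a convex combination of members of $G_\theta$, hence lies in $\conv(G_\theta)$; and because $G_\theta$ is an epigraph, $\conv(G_\theta)$ is upward closed in $s$, so the larger value $s$ keeps $(z,s)$ inside $\conv(G_\theta)$.

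The crux of the argument is establishing that $U$ is integral with vertices in exact correspondence with $(0,n+1)$-paths, and matching each path's cost $\sum_{ij}g_{ij}(\theta)\chi^p_{ij}$ to $g_\theta(z^p)$ through Lemma~\ref{lem:fixedZ}; once these are in hand, the rest is bookkeeping over the flow decomposition. The one technical point I would verify carefully is the upward-closedness of $\conv(G_\theta)$ in the $s$-coordinate, since this is precisely what lets the slack in $s\ge\sum_p\lambda_p g_\theta(z^p)$ be absorbed without leaving the convex hull.
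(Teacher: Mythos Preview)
Your proof is correct, but it takes a different route from the paper. The paper argues via optimization equivalence: it fixes an arbitrary linear objective $\beta^\top z+s$, rewrites the minimization over \eqref{eq:SP_extended} (after eliminating $z$ and $s$) as a shortest-path problem on \Gsp\ with arc costs $g_{ij}(\theta)+\sum_{\ell=i+1}^{j-1}\beta_\ell$, and then invokes Corollary~\ref{cor:shortestPath} to match this with $\min_{(z,s)\in G_\theta}\beta^\top z+s$. Since the two convex sets support the same linear functionals, they coincide.

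Your argument is instead a direct polyhedral one: you identify the vertices of the flow polytope with path indicators via total unimodularity, match each vertex's cost to $g_\theta(z^p)$ through Lemma~\ref{lem:fixedZ}, and then establish both set inclusions by convex decomposition and upward closedness of $\conv(G_\theta)$ in $s$. This is more explicit and self-contained---it does not need the ``equal optimization implies equal sets'' principle---at the price of having to verify boundedness of $U$, the vertex characterization, and the upward-closedness step. The paper's approach is shorter because it offloads the polyhedral structure to the shortest-path equivalence already proved, but your argument makes the extended-formulation mechanism more transparent. Both hinge on the same key identity $\sum_{(i,j)\in p}g_{ij}(\theta)=g_\theta(z^p)$.
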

\begin{proof}
It suffices to show that optimization over $G_{\theta}$ is equivalent to optimization over constraints \eqref{eq:SP_extended}. Optimization over $G_\theta$ corresponds to
\begin{align*}
&\min_{(z,s)\in G_\theta}\{\beta^\top z+s\}\\
\Leftrightarrow& \min_{x,z}\left\{-\theta^\top x+\beta^\top z+\frac{1}{2}x^\top Qx\right\}\;\text{  s.t.  }\;x_i(1-z_i)=0,\; z\in \{0,1\}^n
\end{align*}
for an arbitrary vector $\beta\in \R^n$. 
On the other hand, optimization over \eqref{eq:SP_extended} is equivalent, after projecting out variables $z$ and $s$, to 
\begin{align}
&\min_{u\geq 0}\sum_{i=1}^n\beta_i\left(1-\sum_{\ell=0}^{i-1}u_{\ell i}\right)+\sum_{i=1}^n\sum_{j=i+1}^{n+1}g_{ij}(\theta)u_{ij}\;\text{  s.t.  }\;\eqref{eq:SP_extended_flow},\eqref{eq:SP_extended_bounds}\notag\\
\Leftrightarrow&\min_{u\geq 0}\sum_{i=1}^n\beta_i\left(\sum_{\ell=0}^{i-1}\sum_{j=i+1}^{n+1}u_{\ell j}\right)+\sum_{i=1}^n\sum_{j=i+1}^{n+1}g_{ij}(\theta)u_{ij}\;\text{  s.t.  }\;\eqref{eq:SP_extended_flow},\eqref{eq:SP_extended_bounds}\label{eq:bypass}
\\
\Leftrightarrow&\min_{u\geq 0}\sum_{i=1}^n\sum_{j=i+1}^{n+1}\left(g_{ij}(\theta)+\sum_{\ell=i+1}^{j-1}\beta_\ell\right)u_{ij}\;\text{  s.t.  }\;\eqref{eq:SP_extended_flow},\eqref{eq:SP_extended_bounds},\notag
\end{align}
where \eqref{eq:bypass} follows from the observation that if node $i$ is not visited by the path ($\sum_{\ell=0}^{i-1}u_{\ell i}=0$), then one arc bypassing node $i$ is used. The equivalence between the two problems follows from Corollary~\ref{cor:shortestPath} and the fact that \eqref{eq:SP_extended_flow},\eqref{eq:SP_extended_bounds} are precisely the constraints corresponding to a shortest path problem. 
\end{proof}

\subsubsection{Lifting into the space of continuous variables}

From inequality \eqref{eq:valid0} and Proposition~\ref{prop:extendedFormulation}, we find that for any $\theta\in \R^n$ the linear inequality
\begin{equation}\label{eq:validLinear}\frac{1}{2}t-\theta^\top x\geq \sum_{i=0}^n\sum_{j=i+1}^{n+1}g_{ij}(\theta)u_{ij}\end{equation}
 is valid for $X$, where $(u,z)$ satisfy the constraints in \eqref{eq:SP_extended}. Of particular interest is choosing $\theta$ 
 that maximizes the strength of \eqref{eq:validLinear}:
 \begin{equation}
 \label{eq:validNonLinear}
 \frac{1}{2}t\geq \max_{\theta\in \R^n}\left\{ \sum_{i=0}^n\sum_{j=i+1}^{n+1}g_{ij}(\theta)u_{ij}+\theta^\top x\right\}.
\end{equation}

\begin{proposition}\label{prop:convexHull}If $Q$ is tridiagonal, then inequality \eqref{eq:validNonLinear} and constraints \eqref{eq:SP_extended_flow}-\eqref{eq:SP_extended_bounds} are sufficient to describe $\text{cl}\ \conv(X)$ (in an extended formulation). 
\end{proposition}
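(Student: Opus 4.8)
The plan is to show that the set $\tilde P$ defined by \eqref{eq:validNonLinear} together with the flow constraints \eqref{eq:SP_extended_flow}--\eqref{eq:SP_extended_bounds}, after projecting out the auxiliary variables $u$, coincides with $\text{cl}\ \conv(X)$. Denote this projection by $P$. Since \eqref{eq:validNonLinear} is a supremum of the inequalities \eqref{eq:validLinear} (each affine in $(x,t,u)$) it is convex, and the flow constraints are linear, so $P$ is a closed convex set. By the derivation \eqref{eq:valid0}--\eqref{eq:validLinear} every such inequality is valid for $X$, and lifting any $(x,z,t)\in X$ with the indicator path $u$ associated to $z$ (as in Lemma~\ref{lem:fixedZ}) shows $X\subseteq P$; hence $\text{cl}\ \conv(X)\subseteq P$. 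The work is the reverse inclusion, which I would obtain by showing that the infimum of every linear objective agrees over $P$ and over $X$: if $\inf_P\langle d,\cdot\rangle=\inf_X\langle d,\cdot\rangle$ for all directions $d$, then the two closed convex sets coincide (a strict containment would be contradicted by a separating hyperplane). Because $X$ is an epigraph in $t$, the only nontrivial objectives are those with a positive coefficient on $t$, which I normalize to $\tfrac12 t+\gamma^\top x+\beta^\top z$; directions with negative $t$-coefficient give $-\infty$ on both sides, and the degenerate case of zero $t$-coefficient is handled directly (either both infima are $-\infty$ when $\gamma\neq 0$, or the problem reduces to the $g\equiv 0$ flow polytope when $\gamma=0$).

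For the main case, minimizing $\tfrac12 t+\gamma^\top x+\beta^\top z$ over $\tilde P$, the coefficient of $t$ is positive, so at optimality \eqref{eq:validNonLinear} is tight and $\tfrac12 t=\phi_u(x)$, where $\phi_u(x)\defeq\max_{\theta}\{\sum_{i,j}g_{ij}(\theta)u_{ij}+\theta^\top x\}$. The key step is to eliminate $x$ by Fenchel conjugacy. Writing $G_u(\theta)\defeq\sum_{i,j}g_{ij}(\theta)u_{ij}$, each $g_{ij}(\theta)=-\tfrac12\theta[i,j]^\top Q[i,j]^{-1}\theta[i,j]$ is a concave quadratic because $Q\succ0$ forces $Q[i,j]^{-1}\succ0$, so for $u\geq0$ the function $G_u$ is concave and $-G_u$ is a finite (hence closed, proper) convex quadratic. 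Consequently $\phi_u=(-G_u)^*$, and the biconjugate theorem gives $\phi_u^*=-G_u$, so that $\min_x\{\phi_u(x)+\gamma^\top x\}=-\phi_u^*(-\gamma)=G_u(-\gamma)=\sum_{i,j}g_{ij}(\gamma)u_{ij}$, using $g_{ij}(\gamma)=g_{ij}(-\gamma)$. Thus, after eliminating $t$ and $x$, minimizing over $\tilde P$ reduces to $\min_{u,z}\{\sum_{i,j}g_{ij}(\gamma)u_{ij}+\beta^\top z\}$ subject to \eqref{eq:SP_extended_flow}--\eqref{eq:SP_extended_bounds}.

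It remains to identify this value with the optimum over $X$. Substituting $z_k=1-\sum_{i<k}u_{ik}$ and applying the ``bypass'' simplification from the proof of Proposition~\ref{prop:extendedFormulation} turns the objective into $\sum_{i,j}(g_{ij}(\gamma)+\sum_{\ell=i+1}^{j-1}\beta_\ell)u_{ij}$, i.e.\ a $(0,n+1)$ shortest-path problem on \Gsp\ with arc weights $w_{ij}=g_{ij}(\gamma)+\sum_{\ell=i+1}^{j-1}\beta_\ell$; since the shortest-path LP is integral, its value equals the integer shortest-path length, which by Proposition~\ref{prop:tridiagoanl_equal_SP} and Corollary~\ref{cor:shortestPath} is exactly $\min_{(x,z,t)\in X}\{\tfrac12 t+\gamma^\top x+\beta^\top z\}$ (this is problem \eqref{eq:tridiagonalMIQO} with $c=\gamma$, $a=\beta$). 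Hence the infima agree for every direction and $P=\text{cl}\ \conv(X)$. I expect the main obstacle to be the conjugacy/min-max exchange step: one must verify that the identity $\min_x\{\phi_u(x)+\gamma^\top x\}=G_u(-\gamma)$ holds for \emph{all} feasible, possibly fractional, $u\geq0$ (not only for integral path-incidence vectors), which relies precisely on concavity of $G_u$ being preserved under nonnegative combinations. A secondary technical point is justifying that the projection yields a closed set, so that the ``closure'' in $\text{cl}\ \conv(X)$ is captured exactly, together with the careful bookkeeping of the recession directions in the degenerate objective cases.
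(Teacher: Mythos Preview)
Your argument is correct and follows essentially the same strategy as the paper: show that minimizing an arbitrary linear objective $a^\top z+c^\top x+\tfrac12 t$ over the proposed relaxation reduces to the shortest-path problem on $\Gsp$, hence matches the optimum over $X$. The only notable difference is in how the continuous variable $x$ is eliminated. You compute $\min_x\{\phi_u(x)+\gamma^\top x\}$ via the biconjugate identity $\phi_u^*=(-G_u)^{**}=-G_u$, which is clean and makes explicit why the reduction works for \emph{all} feasible $u\geq 0$. The paper instead takes a shortcut: it simply plugs $\theta=-c$ into the inner maximum, so that $c^\top x+\theta^\top x=0$ and $x$ drops out, yielding a further relaxation whose value already equals that of the shortest-path problem; since this relaxation sits below the full $\max_\theta$ relaxation which in turn sits below $\zeta^*$, equality throughout follows. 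The two computations are equivalent (your conjugate calculation shows precisely that $\theta=-\gamma$ is the minimizing dual choice), but the paper's version avoids invoking Fenchel duality explicitly. Your treatment of the degenerate objective directions and the closedness of the projection (using boundedness of $u$ in the flow polytope) fills in details the paper leaves implicit by deferring to Theorem~1 of \cite{richard2010lifting}.
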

Proposition~\ref{prop:convexHull} is a direct consequence of Theorem~1 in \cite{richard2010lifting}. Nonetheless, for the sake of completeness, we include a short proof.
\begin{proof}[Proof of Proposition~\ref{prop:convexHull}]
	Consider the optimization problem \eqref{eq:miqo}, and its relaxation given by
	\begin{subequations}\label{eq:fullRelax}
	\begin{align}
	    \min_{x,z,u}\;&a^\top z+c^\top x+\max_{\theta\in \R^n}\left\{ \sum_{i=0}^n\sum_{j=i+1}^{n+1}g_{ij}(\theta)u_{ij}+\theta^\top x\right\}\\
	    \text{s.t.}\;&\eqref{eq:SP_extended_flow}-\eqref{eq:SP_extended_bounds}.
	\end{align}
	\end{subequations}
It suffices to show that there exists an optimal solution of \eqref{eq:fullRelax} which is feasible for \eqref{eq:miqo} with the same objective value, and thus it is optimal for \eqref{eq:miqo} as well. We consider a further relaxation of \eqref{eq:fullRelax}, obtained by fixing $\theta=-c$ in the inner maximization problem:
\begin{subequations}\label{eq:specificRelax}
	\begin{align}
	    \min_{x,z,u}\;&a^\top z+ \sum_{i=0}^n\sum_{j=i+1}^{n+1}g_{ij}(-c)u_{ij}\\
	    \text{s.t.}\;&\eqref{eq:SP_extended_flow}-\eqref{eq:SP_extended_bounds}.
	\end{align}
	\end{subequations}
 In particular, the objective value of \eqref{eq:specificRelax} is the same for all values of $x\in \R^n$. Moreover, using identical arguments to Proposition~\ref{prop:extendedFormulation}, we find that the two problems have in fact the same objective value. Finally, given any optimal solution $z^*$ to \eqref{eq:specificRelax}, the point $(x(z^*),z^*)$ is feasible for \eqref{eq:miqo} and optimal for its relaxation \eqref{eq:specificRelax} (with the same objective value), and thus this point is optimal for \eqref{eq:miqo} as well.
\end{proof}

We close this section by presenting an explicit form of inequalities \eqref{eq:validNonLinear}. Define $\bar Q(i,j)\in \R^{n\times n}$ as the matrix obtained by completing $Q[i,j]$ with zeros, that is, $\bar Q(i,j)[i,j]=Q[i,j]$ and $\bar Q(i,j)_{k\ell}=0$ otherwise. Moreover, by abusing notation, we define $\bar Q(i,j)^{-1}\in \R^{n\times n}$ similarly, that is, $\bar Q(i,j)^{-1}[i,j]=Q[i,j]^{-1}$ and $\bar Q(i,j)_{k\ell}^{-1}=0$ otherwise.
\begin{proposition}
For every $(x,u)\in \text{cl}\ \conv(X)$, inequality \eqref{eq:validNonLinear} is equivalent to 
\begin{equation}\label{eq:sdpRepresentation}
\begin{pmatrix}
t & x^\top\\
x & \left(\sum_{i=0}^n\sum_{j=i+1}^{n+1}\bar Q(i,j)^{-1}u_{ij}\right)
\end{pmatrix}\succeq 0. 
\end{equation}
\end{proposition}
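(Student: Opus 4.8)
The plan is to evaluate the inner maximization in \eqref{eq:validNonLinear} in closed form and then recognize the resulting condition as a generalized Schur complement. First I would introduce the shorthand $P\defeq\sum_{i=0}^n\sum_{j=i+1}^{n+1}\bar Q(i,j)^{-1}u_{ij}$ and observe that, since each $Q[i,j]$ is positive definite, each completed matrix $\bar Q(i,j)^{-1}$ is positive semidefinite; as $u\geq 0$ on $\text{cl}\ \conv(X)$, the matrix $P$ is a nonnegative combination of positive semidefinite matrices and hence $P\succeq 0$. The key algebraic identity is that $g_{ij}(\theta)u_{ij}=-\tfrac12 u_{ij}\,\theta^\top\bar Q(i,j)^{-1}\theta$, because the entries of $\bar Q(i,j)^{-1}$ outside the block $[i,j]$ vanish and therefore $\theta^\top\bar Q(i,j)^{-1}\theta=\theta[i,j]^\top Q[i,j]^{-1}\theta[i,j]$. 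Summing over all arcs, the inner objective collapses to the concave quadratic $x^\top\theta-\tfrac12\theta^\top P\theta$, so that the right-hand side of \eqref{eq:validNonLinear} equals $\max_{\theta\in\R^n}\{x^\top\theta-\tfrac12\theta^\top P\theta\}$.

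Next I would compute this maximum, which is exactly the Fenchel conjugate of the quadratic $\theta\mapsto\tfrac12\theta^\top P\theta$ evaluated at $x$. Splitting $\theta$ into its components in $\text{range}(P)$ and $\ker(P)$, the quadratic term only sees the range component while the linear term sees both; consequently the maximum is $+\infty$ when $x\notin\text{range}(P)$ and equals $\tfrac12 x^\top P^\dagger x$ when $x\in\text{range}(P)$, where $P^\dagger$ denotes the Moore--Penrose pseudoinverse. Hence inequality \eqref{eq:validNonLinear} is finite and satisfied precisely when $x\in\text{range}(P)$ together with $t\geq x^\top P^\dagger x$.

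Finally I would invoke the generalized Schur complement lemma: for $P\succeq 0$, the block matrix in \eqref{eq:sdpRepresentation} is positive semidefinite if and only if $x\in\text{range}(P)$ and $t- x^\top P^\dagger x\geq 0$. Matching this with the condition derived in the previous paragraph yields the claimed equivalence. I expect the main obstacle to be the treatment of the singular case: the matrices $\bar Q(i,j)^{-1}$ are rank-deficient by construction, and for any path that skips at least one vertex the matrix $P$ inherits a nontrivial kernel (the rows and columns indexed by the visited vertices vanish), so $P$ is generically not invertible. The argument therefore cannot rely on the ordinary Schur complement with $P^{-1}$ and must carefully track the range condition $x\in\text{range}(P)$---which on $\text{cl}\ \conv(X)$ encodes that $x$ vanishes on the support selected by $z=0$---and replace $P^{-1}$ by the pseudoinverse throughout.
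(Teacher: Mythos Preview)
Your proposal is correct and follows essentially the same route as the paper: collapse the sum $\sum_{i,j}g_{ij}(\theta)u_{ij}$ into the single quadratic $-\tfrac12\theta^\top P\theta$ with $P=\sum_{i,j}\bar Q(i,j)^{-1}u_{ij}\succeq 0$, evaluate the resulting concave maximization as $\tfrac12 x^\top P^\dagger x$ when $x\in\text{range}(P)$ and $+\infty$ otherwise, and conclude via the generalized Schur complement. Your explicit discussion of the singular case and the role of the range condition is, if anything, slightly more careful than the paper's version, which handles the same point more tersely.
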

\begin{proof}
Note that 
 \begin{align}
&\frac{1}{2}t\geq \max_{\theta\in \R^n}\left\{ \sum_{i=0}^n\sum_{j=i+1}^{n+1}g_{ij}(\theta)u_{ij}+\theta^\top x\right\}\notag\\
\Leftrightarrow\;&\frac{1}{2}t\geq \max_{\theta\in \R^n}\left\{ -\sum_{i=0}^n\sum_{j=i+1}^{n+1}\left(\frac{1}{2}\theta[i,j]^\top Q[i,j]^{-1}\theta[i,j]\right)u_{ij}+\theta^\top x\right\}\notag\\
\Leftrightarrow\;&\frac{1}{2}t\geq \max_{\theta\in \R^n}\left\{ -\frac{1}{2}\theta^\top\left(\underbrace{\sum_{i=0}^n\sum_{j=i+1}^{n+1}\bar Q(i,j)^{-1}u_{ij}}_{M(u)}\right)\theta+\theta^\top x\right\}.\label{eq:explicit}
\end{align}
Observe that matrix $M(u)$ is positive semidefinite (since it is a nonnegative sum of psd matrices), thus the maximization \eqref{eq:explicit} is a convex optimization problem, and its optimal value takes the form
\begin{align}
    \begin{cases}
    \frac{1}{2}x^\top M(u)^{\dagger}x & \text{if}\ \ {x\in \mathrm{Range}(M(u))},\\
    +\infty & \text{if \ \ otherwise},
    \end{cases}\nonumber
\end{align}
where $M(u)^{\dagger}$ and $\mathrm{Range}(M(u))$ denote the pseudo-inverse and range of $M(u)$, respectively. If $x\not\in \mathrm{Range}(M(u))$, then inequality \eqref{eq:validNonLinear} is violated, and hence, $(x,u)\not\in \text{cl}\ \conv(X)$. Therefore, we must have $x\in \mathrm{Range}(M(u))$, or equivalently, $M(u)M(u)^\dagger x = x$. In other words,
\begin{align}
    t\geq x^\top M(u)^{\dagger}x\qquad \text{and}\qquad {M(u)M(u)^\dagger x = x}.\nonumber
\end{align}
Invoking the Schur complement~\cite[Appendix A.5]{boyd2004convex} completes the proof.

\end{proof}

\section{General (Sparse) Graphs}\label{sec:fenchel}

In this section, we return  our attention to problem
\eqref{eq:miqo} where graph $\mathcal G$ is not a path (but is nonetheless assumed to be sparse), and matrix $Q$ is diagonally dominant: 
\begin{subequations}\label{eq:ddMiqoOrdered}
\begin{align}
    \zeta^*=\min_{x\in \R^n,z\in \{0,1\}^n}\;&a^\top z+c^\top x+\frac{1}{2}\sum_{i=1}^n  D_{ii}x_i^2+\frac{1}{2}\sum_{i=1}^n\sum_{j=i+1}^n|Q_{ij}|(x_i\pm x_{j})^2\label{eq:ddMiqoOrderedobj}\\
    \text{s.t.}\;&-Mz\leq x\leq Mz,
\end{align}
\end{subequations}
where $D_{ii} = Q_{ii}-\sum_{j\not= i}|Q_{ij}|\ge 0$. 
Note that \eqref{eq:map_miqo} is a special case of \eqref{eq:ddMiqoOrdered} with $n=|N|$, $c_i=-2y_i/\sigma_i^2$, $Q_{ij}=-2/d_{ij}$ if $(i,j)\in E$ and $Q_{ij}=0$ otherwise, $D_{ii}=2/\sigma_i^2$, and every ``$\pm$" sign corresponds to a minus sign. 

A natural approach to leverage the efficient $\mathcal{O}(n^2)$ algorithm for the tridiagonal case given in Section~\ref{sec:efficient_alg} is to simply drop terms $|Q_{ij}|(x_i\pm x_{j})^2$ whenever $j>i+1$, and solve the relaxation with objective 
$$a^\top z+c^\top x+\frac{1}{2}\sum_{i=1}^n  D_{ii}x_i^2+\frac{1}{2}\sum_{i=1}^n|Q_{i,i+1}|(x_i\pm x_{i+1})^2.$$
Intuitively, if matrix $Q$ is ``close" to tridiagonal, the resulting relaxation could be a close approximation to \eqref{eq:ddMiqoOrdered}. Nonetheless, as Example~\ref{ex:tree} below shows, the relaxation can in fact be quite loose. 

\begin{example}\label{ex:tree}

Consider the optimization problem with support graph given in Figure~\ref{fig:4pointgraph}:
\begin{subequations}\label{example1}
\begin{align}
    \zeta^*=\min &-1.3x_1-2.5x_2+4.6x_3-7.8x_4+3x_1^2+6x_2^2+3x_3^2+2x_4^2\notag\\
    &-1.5x_1x_2-x_2x_3-0.8x_2x_4+2(z_1+z_2+z_3+z_4)\label{example1_obj}\\
    \text{s.t.}\;& x_i(1-z_i)=0,\;z_i\in\{0,1\}\qquad i=1,\dots,4.
\end{align}
\end{subequations}

\begin{figure}[htbp]
\centering
\includegraphics[width=0.6\textwidth]{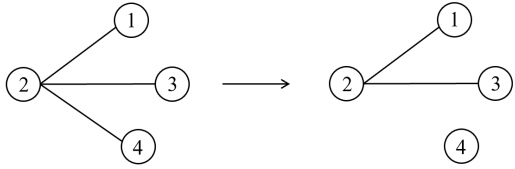}
\caption{Support graphs for Example~\ref{ex:tree}. Left: original; right: after dropping $0.4(x_2-x_4)^2$.}
\label{fig:4pointgraph}
\end{figure}

The optimal solution of \eqref{example1} is $x^*=(0,0,-1.53,3.9)$ with objective value $\zeta^*\approx-14.74$. After deletion of the term $0.4(x_2-x_4)^2$ from \eqref{example1_obj},  we obtain the tridiagonal problem
\begin{subequations}\label{0relaxation}
\begin{align*}
    \zeta_0=\min &-1.3x_1-2.5x_2+4.6x_3-7.8x_4+3x_1^2+5.6x_2^2+3x_3^2+1.6x_4^2\notag \\
    &-1.5x_1x_2-x_2x_3+2(z_1+z_2+z_3+z_4)\\
    \text{s.t.}\;& x_i(1-z_i)=0,\;z_i\in\{0,1\}\qquad i=1,\dots,4,
\end{align*}
\end{subequations}
with optimal solution $x^*_0=(0,0,-1.53,6.5)$ and $\zeta_0=-24.88$. The optimality gap is $\frac{\zeta^*-\zeta_0}{|\zeta^*|}=68.8\%$. \hfill \hfill \qed
\end{example}
We now discuss how to obtain improved relaxations of \eqref{eq:ddMiqoOrdered}, which can result in much smaller optimality gaps.

\subsection{Convex relaxation via path and rank-one convexifications}

The large optimality gap in Example~\ref{ex:tree} can be attributed to the large effect of completely ignoring some terms $|Q_{ij}|(x_i\pm x_{j})^2$. To obtain a better relaxation, we use the following  convexification of rank-one terms for set 
$$X_{2}=\left\{(x,z,t)\in \R^2\times \{0,1\}^2\times \R: (x_1\pm x_2)^2\leq t,\; x_{i}(1-z_i)=0,\; i=1,2\right\}.$$

\begin{proposition}[Atamt\"urk and G\'omez 2019 \cite{atamturk2019rank}]\label{prop:rank-one}
$$\text{cl }\conv(X_{2})=\left\{(x,z,t)\in \R^2\times [0,1]^2\times \R: \frac{(x_1\pm x_2)^2}{\min\{1,z_1+z_2\}}\leq t\right\}.$$
\end{proposition}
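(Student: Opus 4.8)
The plan is to establish the convex hull characterization by proving the two inclusions separately, using the standard technique of comparing optimization of an arbitrary linear objective over both sets. First I would observe that $X_2$ is the epigraph-type set obtained from the rank-one quadratic $(x_1\pm x_2)^2$ together with the indicator complementarity constraints, so $\mathrm{cl\,conv}(X_2)$ is a closed convex set whose validity I can verify by checking that the proposed set, call it $P$, contains $X_2$ and is itself closed and convex. The forward inclusion $X_2\subseteq P$ is the easy direction: for any integral $(x,z,t)\in X_2$, the complementarity constraints force that if $z_1=z_2=0$ then $x_1=x_2=0$, making the inequality in $P$ read $0\le t$ (interpreting $0/0$ appropriately as the limit), while if $z_1+z_2\ge 1$ the denominator $\min\{1,z_1+z_2\}=1$ and the constraint reduces to $(x_1\pm x_2)^2\le t$, which holds by definition of $X_2$. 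Since $P$ is convex (the perspective of a convex quadratic is convex) and closed, and contains $X_2$, we get $\mathrm{cl\,conv}(X_2)\subseteq P$.

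The nontrivial direction is $P\subseteq\mathrm{cl\,conv}(X_2)$. I would prove this by a separation/extreme-point argument: take an arbitrary point $(\bar x,\bar z,\bar t)\in P$ and exhibit it as a limit of convex combinations of points in $X_2$. The natural approach is to fix the continuous-variable ratios and write the point as a convex combination over the four binary patterns $(z_1,z_2)\in\{0,1\}^2$, choosing the convex multipliers to match the relaxed $\bar z$ and distributing the continuous mass so that each atom satisfies complementarity. Concretely, when $0<\bar z_1+\bar z_2<1$ the denominator $\min\{1,\bar z_1+\bar z_2\}=\bar z_1+\bar z_2$, and one writes $\bar x = (\bar z_1+\bar z_2)\hat x$ where $\hat x$ carries the full continuous value on the ``active'' atom; the perspective structure of the constraint $\frac{(x_1\pm x_2)^2}{z_1+z_2}\le t$ is exactly the one that makes this convex combination feasible and value-matching.

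Because the explicit decomposition can be delicate, I would instead prefer to invoke Proposition~\ref{prop:rank-one} as a known result (it is cited to Atamt\"urk and G\'omez), so the cleanest route is to defer the detailed verification to the cited reference and present only the two-inclusion skeleton above. Alternatively, a self-contained proof proceeds by computing $\max\{\beta^\top z+\gamma^\top x - t\}$ over $X_2$ and over $P$ for arbitrary $(\beta,\gamma)$ and showing the optimal values agree; the optimization over $X_2$ splits into the two regimes $z=0$ (forcing $x=0$) and $z\neq 0$ (unconstrained rank-one), and one checks that the concave envelope matches the support function of $P$.

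The hard part will be the reverse inclusion, specifically handling the boundary behavior at $z_1+z_2=0$ where the perspective term $\frac{(x_1\pm x_2)^2}{\min\{1,z_1+z_2\}}$ degenerates to $0/0$; this is exactly why the statement uses $\mathrm{cl\,conv}$ rather than $\mathrm{conv}$, and the closure is needed to capture the limiting recession directions. Care must be taken to interpret the constraint as the closed perspective function, which equals $0$ when $z_1+z_2=0$ and $x_1\pm x_2=0$, and $+\infty$ when $z_1+z_2=0$ but $x_1\pm x_2\neq 0$, ensuring that points forced to have $x=0$ are correctly handled. I expect the remaining algebra to be routine once this perspective interpretation is fixed.
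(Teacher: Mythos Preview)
The paper does not prove this proposition at all: it is stated as a result of Atamt\"urk and G\'omez~\cite{atamturk2019rank} and used as a black box, so there is no ``paper's own proof'' to compare against. You already identify this in your proposal when you note that ``the cleanest route is to defer the detailed verification to the cited reference''---that is precisely what the paper does, and it is the appropriate treatment here.

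Your independent proof sketch is reasonable as far as it goes, but since it is not needed, a brief remark on its status: the forward inclusion is fine. For the reverse inclusion, the convex-combination construction you outline is essentially the right idea, but the version you write (``$\bar x=(\bar z_1+\bar z_2)\hat x$ where $\hat x$ carries the full continuous value on the active atom'') is too vague to be a proof; one must actually specify which atoms get which coordinates of $x$ so that complementarity holds atom-by-atom, and verify that the epigraph variable $t$ decomposes correctly under the perspective structure. The support-function route you mention is cleaner and is closer to how the original reference argues. Either way, this is tangential to the present paper.
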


We propose to solve the convexification of \eqref{eq:ddMiqoOrdered} where binary variables are relaxed to $0\leq z\leq 1$, complementarity constraints are removed, each term $|Q_{ij}|(x_i\pm x_{j})^2$ with $j>i+1$ is replaced with the convexification in Proposition~\ref{prop:rank-one}, and the rest of the terms are convexified using the results of Section~\ref{sec:convexHull}. Formally, define the ``zero"-indices $1=\tau_1<\dots<\tau_{\ell}<\tau_{\ell+1}=n+1$  such that 
$$\{\tau_2,\dots,\tau_{\ell}\}=\left\{i\in N:Q_{i-1,i}=0\right\}.$$ 
Moreover, given indices $0\leq i<j\leq n+1$, define \begin{align*}X(i,j)=\Big\{(x,z,t)\in& \R^{j-i-1}\times\{0,1\}^{j-i-1}\times \R: t\geq x^\top \hat Q(i,j)x,\\
&x_t(1-z_t)=0, \;t=1,\dots,j-i-1 \Big\},\end{align*}
where $\hat Q(i,j)$ is the tridiagonal $(j-i-1)\times(j-i-1)$ matrix corresponding to indices $i+1$ to $j-1$ in problem \eqref{eq:ddMiqoOrdered}, that is, $$\hat Q(i,j)_{t k}=\begin{cases}D_{t+i, t+i}+Q_{t+i,t+i+1}&\text{if }t=k\\
Q_{t+i, k+i} &\text{if }t-k= \pm 1\\
0&\text{otherwise}.\end{cases}$$
A description of $\text{cl }\conv(X(i,j))$ is given in Proposition~\ref{prop:convexHull}. We propose to solve the relaxation of \eqref{eq:ddMiqoOrdered} given by 
\begin{subequations}\label{eq:convexification}
\begin{align}
    \zeta_{p}=\min_{x,z,t}\;&a^\top z+c^\top x+\frac{1}{2}\sum_{k=1}^{\ell}t_k+
    \frac{1}{2}\sum_{i=1}^n\sum_{j=i+2}^n|Q_{i,j}|\frac{(x_i\pm x_{j})^2}{\min\{1,z_i+z_j\}}\\
    \text{s.t.}\;&(x[\tau_{k}-1,\tau_{k+1}],z[\tau_{k}-1,\tau_{k+1}],t_k)\in \text{cl}\ \conv(X(\tau_{k}-1,\tau_{k+1}))\notag\\
    &\hspace{6cm}  k=1,\dots,\ell\label{eq:convexification_sdp}\\
    &x\in \R^n, z\in [0,1]^n, t\in \R^{\ell}.
\end{align}
\end{subequations}
Note that the strength of relaxation \eqref{eq:convexification} depends on the order in which variables $x_1,\dots,x_n$ are indexed. We discuss how to choose an ordering in Section~\ref{sec:decomposition}. In the rest of this section, we assume that the order is fixed. 

We first establish that relaxation \eqref{eq:convexification} is stronger than the pure rank-one relaxation 
\begin{align}\label{eq:ddMiqoOrdered_R1}
    \zeta_{R1}=\min_{x\in \R^n,z\in [0,1]^n}\;&a^\top z+c^\top x+\frac{1}{2}\sum_{i=1}^n  D_{ii}\frac{x_i^2}{z_i}+\frac{1}{2}\sum_{i=1}^n\sum_{j=i+1}^n|Q_{ij}|\frac{(x_i\pm x_{j})^2}{\min\{1,z_i+z_j\}}
\end{align}
used in \cite{atamturk2019rank}.
\begin{proposition}
Given any vectors $a,c$ and matrix $Q$, $\zeta_{R_1}\leq \zeta_p\leq \zeta^*$.
\end{proposition}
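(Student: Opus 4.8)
The plan is to prove the two inequalities separately: the right inequality $\zeta_p\le\zeta^*$ is a direct relaxation argument, while the left inequality $\zeta_{R1}\le\zeta_p$ carries the main content. For $\zeta_p\le\zeta^*$, I would take an optimal solution $(x^*,z^*)$ of \eqref{eq:ddMiqoOrdered}, so that $z^*$ is binary with $x^*_i(1-z^*_i)=0$ for all $i$, and exhibit a feasible point of \eqref{eq:convexification} of the same objective value. For each block $k$ set $t_k^*\defeq x^*[\tau_k-1,\tau_{k+1}]^\top \hat Q(\tau_k-1,\tau_{k+1})\,x^*[\tau_k-1,\tau_{k+1}]$; then $(x^*[\tau_k-1,\tau_{k+1}],z^*[\tau_k-1,\tau_{k+1}],t_k^*)\in X(\tau_k-1,\tau_{k+1})\subseteq \text{cl}\ \conv(X(\tau_k-1,\tau_{k+1}))$, so the point is feasible. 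On it, $\tfrac12\sum_k t_k^*$ reproduces exactly the diagonal and consecutive-pair terms of the objective of \eqref{eq:ddMiqoOrdered}, while integrality of $z^*$ together with complementarity forces $(x^*_i\pm x^*_j)^2/\min\{1,z^*_i+z^*_j\}=(x^*_i\pm x^*_j)^2$ for every remaining cross term. Hence the objective of \eqref{eq:convexification} at this point equals $\zeta^*$, and since \eqref{eq:convexification} is a minimization, $\zeta_p\le\zeta^*$.

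For $\zeta_{R1}\le\zeta_p$, I would first observe that the objectives of \eqref{eq:convexification} and \eqref{eq:ddMiqoOrdered_R1} share the terms $a^\top z+c^\top x$ and the rank-one cross terms with $j\ge i+2$, and differ only in the treatment of the block (diagonal and consecutive-pair) terms: \eqref{eq:convexification} uses the exact block convexification through $\tfrac12 t_k$ with $(x,z,t_k)\in\text{cl}\ \conv(X(\tau_k-1,\tau_{k+1}))$, whereas \eqref{eq:ddMiqoOrdered_R1} uses the separable underestimators $D_{ii}x_i^2/z_i$ and $|Q_{i,i+1}|(x_i\pm x_{i+1})^2/\min\{1,z_i+z_{i+1}\}$. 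A feasible point $(\bar x,\bar z,\bar t)$ of \eqref{eq:convexification} has $\bar x\in\R^n$, $\bar z\in[0,1]^n$ and is therefore feasible for \eqref{eq:ddMiqoOrdered_R1}, whose only constraints are box constraints. Thus it suffices to show that, for each block, every $(x,z,t_k)\in\text{cl}\ \conv(X(\tau_k-1,\tau_{k+1}))$ satisfies
\[
t_k\ \ge\ h(x,z),\qquad h(x,z)\defeq\sum_{i}D_{ii}\frac{x_i^2}{z_i}+\sum_{i}|Q_{i,i+1}|\frac{(x_i\pm x_{i+1})^2}{\min\{1,z_i+z_{i+1}\}},
\]
the sums ranging over the block, since then $\tfrac12 t_k$ dominates the corresponding block terms of \eqref{eq:ddMiqoOrdered_R1} termwise, and summing over blocks and adding the shared terms gives that the objective of \eqref{eq:ddMiqoOrdered_R1} at $(\bar x,\bar z)$ is at most that of \eqref{eq:convexification} at $(\bar x,\bar z,\bar t)$; minimizing over the latter yields $\zeta_{R1}\le\zeta_p$.

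To establish the domination inequality, I would argue that $h$ is convex and tight on integer points. By the perspective reformulation \cite{Ceria1999} each term $D_{ii}x_i^2/z_i$ is convex, and by Proposition~\ref{prop:rank-one} each term $|Q_{i,i+1}|(x_i\pm x_{i+1})^2/\min\{1,z_i+z_{i+1}\}$ is convex, so $h$, being a finite sum, is convex. Moreover, on the integer-feasible points of the block, i.e.\ binary $z$ satisfying complementarity, each term coincides with the corresponding monomial (under the usual $0/0=0$ convention when an indicator vanishes), so that $h(x,z)=x^\top\hat Q(\tau_k-1,\tau_{k+1})x$ there. Consequently the closed convex set $\{(x,z,t_k):t_k\ge h(x,z),\ 0\le z\le 1\}$ contains every integer-feasible triple of the block (where $t_k\ge x^\top\hat Q(\tau_k-1,\tau_{k+1})x=h$), hence contains $X(\tau_k-1,\tau_{k+1})$ and therefore its closed convex hull, giving $t_k\ge h(x,z)$ on $\text{cl}\ \conv(X(\tau_k-1,\tau_{k+1}))$.

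The main obstacle is precisely this domination inequality. Its delicate points are establishing convexity and exactness-on-integer-points of the separable underestimator $h$ simultaneously across the perspective and rank-one pieces, particularly the boundary behaviour under the $0/0$ convention when an indicator equals zero, and then invoking minimality of the closed convex hull to pass from \emph{contains all integer-feasible triples} to \emph{contains} $\text{cl}\ \conv$. A routine but necessary bookkeeping check is that the zero-indices $\tau_1<\dots<\tau_{\ell+1}$ partition the diagonal and consecutive-pair terms exactly across the blocks, so that no term is dropped or double-counted when the two objectives are compared.
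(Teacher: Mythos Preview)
Your proof is correct and follows the same route as the paper: both establish $\zeta_p\le\zeta^*$ by the relaxation argument, and $\zeta_{R1}\le\zeta_p$ by observing that the block convex hull is the tightest convex relaxation of the tridiagonal part and therefore dominates the perspective/rank-one surrogate used in \eqref{eq:ddMiqoOrdered_R1}. The paper states the latter tersely via the word ``ideal''; you unpack it by showing explicitly that the epigraph $\{t_k\ge h(x,z)\}$ is a closed convex set containing $X(\tau_k-1,\tau_{k+1})$ and hence its closed convex hull, which is a welcome clarification rather than a different argument.
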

\begin{proof}
Since both $\eqref{eq:convexification}$ and \eqref{eq:ddMiqoOrdered_R1} are relaxations of \eqref{eq:ddMiqoOrdered}, it follows that $\zeta_{R1}\leq \zeta^*$ and $\zeta_{p}\leq \zeta^*$. Moreover, note that 
\begin{align*}
    \min_{x,z,t}\;&a^\top z+c^\top x+\frac{1}{2}\sum_{k=1}^{\ell}t_k\\
    \text{s.t.}\;&(x[\tau_{k}-1,\tau_{k+1}],z[\tau_{k}-1,\tau_{k+1}],t_k)\in \text{cl}\ \conv(X(\tau_{k}-1,\tau_{k+1}))\notag\\
    &\hspace{6cm}  k=1,\dots,\ell\\
    &x\in \R^n, z\in [0,1]^n, t\in \R^{\ell}
\end{align*}
is an \emph{ideal} relaxation of the discrete problem 
\begin{align*}
\min_{x\in \R^n,z\in \{0,1\}^n}\;&a^\top z+c^\top x+\frac{1}{2}\sum_{i=1}^n  D_{ii}x_i^2+\frac{1}{2}\sum_{i=1}^n|Q_{i,i+1}|(x_i\pm x_{i+1})^2\\
    \text{s.t.}\;&-Mz\leq x\leq Mz,
\end{align*}
whereas \eqref{eq:ddMiqoOrdered_R1} 
is not necessarily an ideal relaxation of the same problem. Since the two relaxations coincide in how they handle terms $|Q_{ij}|(x_i\pm x_j)^2$ for $j>i+1$, it follows that $\zeta_{R1}\leq \zeta_{p}$.
\end{proof}

While relaxation \eqref{eq:convexification} is indeed strong and is conic-representable, it may be difficult to solve using off-the-shelf solvers. Indeed, the direct implementation of \eqref{eq:convexification_sdp} requires the addition of $\mathcal{O}(n^2)$ additional variables \emph{and} the introduction of the positive-semidefinite constraints \eqref{eq:sdpRepresentation}, resulting in a large-scale SDP. We now develop a tailored decomposition algorithm to solve \eqref{eq:convexification} based on Fenchel duality.

Decomposition methods for mixed-integer \emph{linear} programs, such as Lagrangian decomposition, have been successful in solving large-scale instances. In these frameworks, typically a set of ``complicating" constraints that tie a large number of variables are relaxed using a Lagrangian relaxation scheme. In this vein, in
  \cite{Fang2019}, the authors propose a Lagrangian relaxation-based decomposition method for  spatial graphical model estimation problems under cardinality constraints. The Lagrangian relaxation of the cardinality constraint results in a Lagrangian dual problem  that decomposes into smaller  mixed-integer subproblems, which are solved using optimization solvers. In contrast, in this paper, we use Fenchel duality to relax the ``complicating" terms in the objective. This results in subproblems that can be solved in polynomial time, in parallel. Furthermore, the strength of the Fenchel dual results in a highly scalable algorithm that converges to an optimal solution of the relaxation fast. Before we describe the algorithm, we first introduce the Fenchel dual problem.

\subsection{Fenchel duality}\label{sec:fenchel_dual}
The decomposition algorithm we propose relies on the Fenchel dual of terms resulting from the rank-one convexification.
\begin{proposition}[Fenchel dual] \label{prop:fenchel}For all $(x,z)$ satisfying $0\leq z\leq 1$ and each $\alpha,\beta_1,\beta_2\in\mathbb{R}$
\begin{equation}\label{fencheldual}
    \frac{(x_1 \pm x_2)^2}{\min\{1,z_1+z_2\}} \geq \alpha (x_1\pm x_2)-\beta_1z_1-\beta_2z_2-f^*(\alpha,\beta_1,\beta_2), 
\end{equation}
where \begin{align*}f^*(\alpha,\beta_1,\beta_2)\defeq& \max_{x,z}\;\alpha(x_1\pm x_2)-\beta_1z_1-\beta_2z_2-\frac{(x_1\pm x_2)^2}{\min\{1,z_1+z_2\}}\\
=&\max\{0,\frac{\alpha^2}{4}-\min\{\beta_1,\beta_2\}\}-\min\{\max\{\beta_1,\beta_2\},0\}.\end{align*}

Moreover, for any fixed $(x,z)$, there exists  $\alpha,\beta_1,\beta_2$ such that the inequality is tight.
\end{proposition}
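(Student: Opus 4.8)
The plan is to read the claimed identity for $f^*$ as a convex-conjugate computation and to read inequality \eqref{fencheldual} as the associated Fenchel--Young inequality. The inequality itself is immediate: by the very definition of $f^*$ as a maximum over $(x,z)$, for every feasible $(x,z)$ and every $(\alpha,\beta_1,\beta_2)$ one has $\alpha(x_1\pm x_2)-\beta_1 z_1-\beta_2 z_2-\frac{(x_1\pm x_2)^2}{\min\{1,z_1+z_2\}}\le f^*(\alpha,\beta_1,\beta_2)$, and rearranging is exactly \eqref{fencheldual}. The real content is therefore the closed form of $f^*$ and the tightness (attainment) claim.

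To compute $f^*$, I would first note that the objective depends on $x$ only through the scalar $s\defeq x_1\pm x_2$ (with the sign fixed throughout). For fixed $z$, writing $m=\min\{1,z_1+z_2\}$ and maximizing $\alpha s-s^2/m$ over $s\in\R$ gives value $\tfrac{\alpha^2}{4}m$ (attained at $s=\alpha m/2$) when $m>0$, while $m=0$ forces $s=0$ and value $0$, which also equals $\tfrac{\alpha^2}{4}m$. Hence
\[
f^*(\alpha,\beta_1,\beta_2)=\max_{0\le z\le 1}\Big\{\tfrac{\alpha^2}{4}\min\{1,z_1+z_2\}-\beta_1 z_1-\beta_2 z_2\Big\}.
\]
The remaining maximization is of a concave piecewise-linear function on $[0,1]^2$ whose only kink is the segment $z_1+z_2=1$; this segment splits the square into the triangles $\{z_1+z_2\le 1\}$ and $\{z_1+z_2\ge 1\}$, on each of which the objective is affine. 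Since an affine function on a triangle attains its maximum at a vertex, and all such vertices are corners of the square, it suffices to evaluate the four corners $(0,0),(1,0),(0,1),(1,1)$, giving $0$, $\tfrac{\alpha^2}{4}-\beta_1$, $\tfrac{\alpha^2}{4}-\beta_2$, and $\tfrac{\alpha^2}{4}-\beta_1-\beta_2$. Thus $f^*=\max\{0,\tfrac{\alpha^2}{4}-\beta_1,\tfrac{\alpha^2}{4}-\beta_2,\tfrac{\alpha^2}{4}-\beta_1-\beta_2\}$. To reach the stated form I would assume without loss of generality $\beta_1=\min\{\beta_1,\beta_2\}\le\beta_2$ (the expression is symmetric); then $\tfrac{\alpha^2}{4}-\beta_2$ is dominated, and I would verify $\max\{0,u,u-\beta_2\}=\max\{0,u\}+\max\{0,-\beta_2\}$ with $u=\tfrac{\alpha^2}{4}-\beta_1$ by the two cases $\beta_2\ge 0$ and $\beta_2<0$, the latter using the key observation that $\beta_2<0$ forces $\beta_1<0\le\tfrac{\alpha^2}{4}$ and hence $u>0$. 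This yields precisely $\max\{0,\tfrac{\alpha^2}{4}-\min\{\beta_1,\beta_2\}\}-\min\{\max\{\beta_1,\beta_2\},0\}$.

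For tightness I would fix any $(\bar x,\bar z)$ with $\frac{(\bar x_1\pm\bar x_2)^2}{\min\{1,\bar z_1+\bar z_2\}}$ finite and exhibit $(\alpha,\beta_1,\beta_2)$ attaining equality. This is equivalent to producing a subgradient of the closed proper convex function $\phi(s,z_1,z_2)=s^2/\min\{1,z_1+z_2\}$---convexity being supplied by Proposition~\ref{prop:rank-one}---at $(\bar s,\bar z)$, so that its supporting-hyperplane inequality coincides with \eqref{fencheldual}. I would construct it by cases: when $\min\{1,\bar z_1+\bar z_2\}>0$, take $\alpha=2\bar s/\min\{1,\bar z_1+\bar z_2\}$ and read off $\beta_1,\beta_2$ from the $z$-derivatives of $\phi$ (namely $\bar s^2/(\bar z_1+\bar z_2)^2$ when $\bar z_1+\bar z_2<1$, and $0$ when $\bar z_1+\bar z_2>1$); when $\bar z_1=\bar z_2=0$, feasibility gives $\bar s=0$ and $\alpha=\beta_1=\beta_2=0$ works. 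The main obstacle is the kink $\bar z_1+\bar z_2=1$, where $\phi$ is nondifferentiable in $z$ and a valid subgradient must be selected from the normal cone; alternatively one could invoke Fenchel--Moreau ($\phi=\phi^{**}$) and argue attainment of the biconjugate supremum, but either route still requires carrying out this boundary case carefully.
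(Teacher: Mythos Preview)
Your argument is correct and, for the computation of $f^*$, genuinely cleaner than the paper's. The paper does a direct case split on $z$ (the cases $z_1=z_2=0$, $0<z_1+z_2\le 1$, and $z_1+z_2>1$) and within each case optimizes over $x$ and then over $z$, eventually comparing the three candidate values $0$, $\alpha^2/4-\min\{\beta_1,\beta_2\}$, and $\alpha^2/4-\beta_1-\beta_2$. Your route---first eliminate $x$ via the scalar $s$ to get $\tfrac{\alpha^2}{4}\min\{1,z_1+z_2\}-\beta_1 z_1-\beta_2 z_2$, then observe this is piecewise affine on $[0,1]^2$ with the kink segment hitting the square only at corners, so the four corner values suffice---reaches the same four-term maximum more transparently and also makes the subsequent algebraic simplification easier.

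For tightness, the paper simply writes down explicit $(\alpha,\beta_1,\beta_2)$ in four cases, and your gradient-based construction produces exactly the same formulas in the smooth regions. The ``obstacle'' you flag at $\bar z_1+\bar z_2=1$ is not one: either of your two adjacent formulas is a valid subgradient there, and indeed the paper just folds this boundary into the case $z_1+z_2\ge 1$ with $\alpha=2\bar s$, $\beta_1=\beta_2=0$, which you can verify directly gives equality. One case you omit by assuming the left side is finite is $\bar z_1=\bar z_2=0$ with $\bar s\neq 0$ (left side $+\infty$); the paper handles this with a limiting choice $\alpha=\rho\bar s$, $\beta_1=\beta_2=\alpha^2/4$, $\rho\to\infty$, so ``tightness'' there is in the supremum sense. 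With these two small additions your proof is complete.
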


\begin{proof}
For simplicity, we first assume that the ``$\pm$" sign is a minus. Define function $f$ as
\begin{equation*}
f(\alpha_1,\alpha_2,\beta_1,\beta_2,x_1,x_2,z_1,z_2)=\alpha_1x_1+\alpha_2x_2-\beta_1z_1-\beta_2z_2-\frac{{(x_1-x_2)}^2}{\min\{ 1,z_1+z_2\}},\\
\end{equation*}
and consider the maximization problem given by
\begin{subequations} \label{fenchelmax}
    \begin{align}
    f^*(\alpha_1,\alpha_2,\beta_1,\beta_2)=
    &\max_{x,z}\  f(\alpha_1,\alpha_2,\beta_1,\beta_2,x_1,x_2,z_1,z_2)\quad\\
    &\text{s.t.}\;x_1,x_2\in\mathbb{R},z_1, z_2 \in [0,1].
    \end{align}
\end{subequations}
We now compute an explicit form of $f^*(\alpha_1,\alpha_2,\beta_1,\beta_2)$. First, observe that if $\alpha_1+\alpha_2 \neq 0$, then $f^*(\alpha_1,\alpha_2,\beta_1,\beta_2) = +\infty$,   (with $z_1=z_2=1,x_1=x_2=\pm\infty$). Thus we assume without loss of generality that $\alpha_1+\alpha_2 = 0$, let $\alpha=\alpha_1$ and use the short notation $f^*(\alpha,\beta_1,\beta_2)$ and $ f(\alpha,\beta_1,\beta_2,x_1,x_2,z_1,z_2)$ instead of $f^*(\alpha,-\alpha,\beta_1,\beta_2)$ and $f(\alpha,-\alpha,\beta_1,\beta_2,x_1,x_2,z_1,z_2)$, respectively. 

To compute a maximum of \eqref{fenchelmax}, we consider three classes of candidate solutions. 
\begin{enumerate}
    \item Solutions with $z_1=z_2=0$. If $x_1=x_2$, then $f(\alpha,\beta_1,\beta_2,x_1,x_2,z_1,z_2)=0$. Otherwise, if $x_1\neq x_2$, then $f(\alpha,\beta_1,\beta_2,x_1,x_2,z_1,z_2)=-\infty$. 
    \item Solutions with $0<z_1+z_2\leq1$. 
    We claim that we can assume without loss of generality that $z_1+z_2=1$. First, if $z_1=0$, then the objective is homogeneous in $z_2$, thus there exists an optimal solution where either $z_2=0$ (but this case has already been considered) or $z_2=1$. The case $z_2=0$ is identical. Finally, if both $0<z_1,z_2$ and $z_1+z_2<1$, then from the optimality conditions we find that
    $$0=-\beta_1+\left(\frac{x_1-x_2}{z_1+z_2}\right)^2=-\beta_2+\left(\frac{x_1-x_2}{z_1+z_2}\right)^2,$$
    and in particular this case only happens if $\beta_1=\beta_2$. In this case the objective is homogeneous in $z_1+z_2$, 
thus there exists another optimal solution where either $z_1+z_2\to 0$ (already considered) or $z_1+z_2=1$. 

Moreover, in the $z_1+z_2=1$ case,
    \begin{align*}
    f(\alpha,\beta_1,\beta_2,x_1,x_2,z_1,z_2)&=\alpha(x_1-x_2)-{(x_1-x_2)}^2-\beta_1z_1-\beta_2z_2\\
    &\leq\frac{\alpha^2}{4}-\beta_1z_1-\beta_2z_2\tag{equal if $x_1-x_2=\alpha/2$}\\
    &\leq\frac{\alpha^2}{4}-\min\{\beta_1,\beta_2\}.
    \end{align*}
    \item Solutions with $z_1+z_2>1$. In this case, the objective is linear in $z$, thus in an optimal solution, $z$ is at its bound. The only case not considered already is $z_1=z_2=1$, where
    \begin{align*}
    f(\alpha,\beta_1,\beta_2,x_1,x_2,z_1,z_2)&=\alpha(x_1-x_2)-{(x_1-x_2)}^2-\beta_1z_1-\beta_2z_2\\
    &\leq\frac{\alpha^2}{4}-\beta_1-\beta_2.\tag{equal if $x_1-x_2=\alpha/2$}
    \end{align*}
\end{enumerate}
Thus, to compute an upper bound on $f^*(\alpha,\beta_1,\beta_2)$, it suffices to compare the three values $0$, $\alpha^2/4-\min\{\beta_1,\beta_2\}$, and $\alpha^2/4-\beta_1-\beta_2$ and choose the largest one. 
The result can be summarized as
\begin{equation*}
    f^*(\alpha,\beta_1,\beta_2)\leq\max\{0,\frac{\alpha^2}{4}-\min\{\beta_1,\beta_2\}\}-\min\{\max\{\beta_1,\beta_2\},0\}.
\end{equation*}

Finally, for a given $(x,z)$, we discuss how to choose $(\alpha,\beta_1,\beta_2)$ so that inequality \eqref{fencheldual} is tight. 
\begin{itemize}
    \item[$\bullet$]  If $z=0$ and $x_1-x_2=0$, then set $\alpha=\beta_1=\beta_2=0$.
    \item[$\bullet$] If $z=0$ and $x_1-x_2\neq 0$, then set $\alpha=\rho (x_1-x_2)$ with $\rho\to \infty$, and set $\beta_1=\beta_2=\alpha^2/4$.
    \item[$\bullet$] If $0<z_1+z_2<1$, then set $\alpha=2 (x_1-x_2)$ and $\beta_1=\beta_2=(x_1-x_2)^2/(z_1+z_2)$.
    \item[$\bullet$] If $1\leq z_1+z_2$, then set $\alpha=2 (x_1-x_2)$ and $\beta_1=\beta_2=0$. 
\end{itemize}

\end{proof}

\begin{remark}\label{rem:fenchelCases}
Function $f^*$ is defined by pieces, corresponding to a maximum of convex functions. By analyzing under which cases the maximum is attained, we find that
$$f^*(\alpha,\beta_1,\beta_2)=\begin{cases}0&\text{if }\min\{\beta_1,\beta_2\}>\alpha^2/4\\
\alpha^2/4-\beta_1-\beta_2&\text{if }\max\{\beta_1,\beta_2\}<0\\
\alpha^2/4-\min\{\beta_1,\beta_2\}&\text{otherwise}.
\end{cases}$$
Indeed:
\begin{enumerate}
    \item Case $\min\{\beta_1,\beta_2\}>\alpha^2/4$. Since $\alpha^2/4-\beta_1-\beta_2<\alpha^2/4-\min\{\beta_1,\beta_2\}<0$, we conclude that $f^*(\alpha,\beta_1,\beta_2)=0$ with $x_1^*,x_2^*,z_1^*,z_2^*=0$.
     \item Case $\max\{\beta_1,\beta_2\}<0$. Since $0<\alpha^2/4-\min\{\beta_1,\beta_2\}<\alpha^2/4-\beta_1-\beta_2$,  we conclude that $f^*(\alpha,\beta_1,\beta_2)=\alpha^2/4-\beta_1-\beta_2$, with $x_1^*-x_2^*=\alpha/2,z_1^*=z_2^*=1$.
    \item Case $\min\{\beta_1,\beta_2\}\leq\alpha^2/4$ and $\max\{\beta_1,\beta_2\}\geq0$. Since $0,  \alpha^2/4-\beta_1-\beta_2\leq \alpha^2/4-\min\{\beta_1,\beta_2\}$, we conclude that $f^*(\alpha,\beta_1,\beta_2)=\alpha^2/4-\min\{\beta_1,\beta_2\}$ with $x_1^*-x_2^*=\alpha/2,z_1^*+z_2^*=1$.\hfill \hfill \qed
\end{enumerate}
\end{remark}

From Proposition~\ref{prop:fenchel}, it follows that problem \eqref{eq:convexification} can be written as \begin{subequations}\label{eq:convexification_fenchel}
\begin{align}
    \zeta_{p}=\min_{x,z,t}\;&\max_{\alpha,\beta}\;a^\top z+c^\top x+\frac{1}{2}\sum_{k=1}^{\ell}t_k\notag\\
    &\hspace{-0.9cm}+
    \frac{1}{2}\sum_{i=1}^n\sum_{j=i+2}^n|Q_{ij}|\Big(\alpha_{ij}(x_i\pm x_j)-\beta_{ij,i}z_i-\beta_{ij,j}z_j-f^*(\alpha_{ij},\beta_{ij,i},\beta_{ij,j})\Big)\\
    \text{s.t.}\;&(x[\tau_{k}-1,\tau_{k+1}],z[\tau_{k}-1,\tau_{k+1}],t_k)\in \text{cl}\ \conv(X(\tau_{k}-1,\tau_{k+1}))\notag\\
    &\hspace{6cm}  k=1,\dots,\ell\\
    &(x,z)\in \mathcal{X}, t\in \R^{\ell},
\end{align}
\end{subequations}
where $\mathcal{X}=\{(x,z)\in \R^n\times [0,1]^n: -Mz\leq x\leq Mz\}$. Define
\begin{align*}\psi(x,z,t;\alpha,\beta)\defeq&\sum_{i=1}^n\left(a_i-\frac{1}{2}\sum_{j=i+2}^n|Q_{ij}|\beta_{ij,i}-\frac{1}{2}\sum_{j=1}^{i-2}|Q_{ji}|\beta_{ji,i}\right)z_i\notag\\
+&\sum_{i=1}^n\left(c_i+\frac{1}{2}\sum_{j=i+2}^n|Q_{ij}|\alpha_{ij}+\frac{1}{2}\sum_{j=1}^{i-2}(\pm|Q_{ji}|\alpha_{ji})\right)x_i+\frac{1}{2}\sum_{k=1}^{\ell}t_k.
\end{align*}

\begin{proposition}[Strong duality]\label{prop:strong duality}
Strong duality holds for problem \eqref{eq:convexification_fenchel}, that is, 
\begin{align*}
    \zeta_{p}=\max_{\alpha,\beta}\;h(\alpha,\beta),
\end{align*}
where
{
\begin{subequations}\label{eq:convexification_dual}
\begin{align}
    h(\alpha,\beta)\defeq &-\frac{1}{2}\sum_{i=1}^n\sum_{j=i+2}^n|Q_{ij}| f^*(\alpha_{ij},\beta_{ij,i},\beta_{ij,j})
+\min_{x,z,t}\;\Big\{\psi(x,z,t,\alpha,\beta)\label{eq:convexification_obj}\Big\}\\
    \text{s.t.}\;&(x[\tau_{k}-1,\tau_{k+1}],z[\tau_{k}-1,\tau_{k+1}],t_k)\in \text{cl}\ \conv(X(\tau_{k}-1,\tau_{k+1}))\notag\\
    &\hspace{6cm}  k=1,\dots,\ell\\
    &(x,z)\in \mathcal{X}, t\in \R^{\ell}.
\end{align}
\end{subequations}
}

\end{proposition}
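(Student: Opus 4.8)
The plan is to read \eqref{eq:convexification_fenchel} as a convex--concave saddle-point problem and to interchange the inner minimization with the outer maximization via a minimax theorem. Write the objective of \eqref{eq:convexification_fenchel} as $\Phi(x,z,t;\alpha,\beta)$, collecting the linear part $a^\top z+c^\top x+\frac12\sum_{k=1}^\ell t_k$ together with the Fenchel terms $\frac12\sum_{i=1}^n\sum_{j=i+2}^n|Q_{ij}|\big(\alpha_{ij}(x_i\pm x_j)-\beta_{ij,i}z_i-\beta_{ij,j}z_j-f^*(\alpha_{ij},\beta_{ij,i},\beta_{ij,j})\big)$. First I would verify the two one-sided convexity requirements. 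For fixed $(\alpha,\beta)$, the map $\Phi(\cdot;\alpha,\beta)$ is affine (hence convex) in $(x,z,t)$, since every term is linear in these variables and $f^*$ does not depend on them. For fixed $(x,z,t)$, the map $\Phi(x,z,t;\cdot)$ is concave in $(\alpha,\beta)$: the terms $\alpha_{ij}(x_i\pm x_j)-\beta_{ij,i}z_i-\beta_{ij,j}z_j$ are affine in $(\alpha,\beta)$, while $-f^*$ is concave because $f^*$ is a conjugate function, i.e.\ a supremum of affine functions of $(\alpha,\beta)$ by its definition in Proposition~\ref{prop:fenchel}, and is finite everywhere on the reduced parametrization $\alpha_1+\alpha_2=0$ by the explicit piecewise formula in Remark~\ref{rem:fenchelCases}.

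Next I would check the feasibility sets and continuity. The minimizing variables $(x,z,t)$ range over $\mathcal{X}\times\R^\ell$ intersected with the constraints $(x[\tau_k-1,\tau_{k+1}],z[\tau_k-1,\tau_{k+1}],t_k)\in\text{cl}\ \conv(X(\tau_k-1,\tau_{k+1}))$, which is a convex set. Since $\mathcal{X}$ is compact, and each $\text{cl}\ \conv(X(\cdot))$ only forces $t_k$ to lie above a continuous (quadratic) function of the bounded variables $x$, the minimizing objective attains its value on the lower envelope of $t$; hence one may restrict $t$ to a compact box without changing $\zeta_p$, making the minimizing feasible set compact and convex. The maximizing domain for $(\alpha,\beta)$ is a Euclidean space, which is convex. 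All terms of $\Phi$, including the piecewise $f^*$, are continuous, so the required semicontinuity holds and Sion's minimax theorem applies, yielding $\min_{x,z,t}\max_{\alpha,\beta}\Phi=\max_{\alpha,\beta}\min_{x,z,t}\Phi$. The left-hand side equals $\zeta_p$ by \eqref{eq:convexification_fenchel}, since by Proposition~\ref{prop:fenchel} the inner maximization over $(\alpha,\beta)$ reconstructs the rank-one terms $|Q_{ij}|(x_i\pm x_j)^2/\min\{1,z_i+z_j\}$.

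Finally I would identify the right-hand side with $\max_{\alpha,\beta}h(\alpha,\beta)$. For fixed $(\alpha,\beta)$ the terms $-\frac12\sum_{i=1}^n\sum_{j=i+2}^n|Q_{ij}|f^*(\alpha_{ij},\beta_{ij,i},\beta_{ij,j})$ are constant in $(x,z,t)$ and pull out of the inner minimization, matching the leading term of $h$ in \eqref{eq:convexification_dual}. Collecting the coefficient of each $z_i$ gives $a_i-\frac12\sum_{j=i+2}^n|Q_{ij}|\beta_{ij,i}-\frac12\sum_{j=1}^{i-2}|Q_{ji}|\beta_{ji,i}$ and the coefficient of each $x_i$ gives $c_i+\frac12\sum_{j=i+2}^n|Q_{ij}|\alpha_{ij}+\frac12\sum_{j=1}^{i-2}(\pm|Q_{ji}|\alpha_{ji})$, which is precisely $\psi(x,z,t;\alpha,\beta)$; the two sums per variable separate the contributions where $i$ is the smaller versus the larger index of a relaxed pair. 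Hence $\min_{x,z,t}\Phi=h(\alpha,\beta)$ and the interchange gives $\zeta_p=\max_{\alpha,\beta}h(\alpha,\beta)$.

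I expect the main obstacle to be the rigorous verification of the minimax theorem's hypotheses, and in particular the compactness reduction for the $t$ variables: because $t$ is a priori only bounded below by the epigraph constraints, one must argue carefully that restricting to a compact feasible region leaves $\zeta_p$ unchanged before invoking Sion. A secondary but routine point is confirming that $f^*$ is genuinely convex and finite, so that $-f^*$ is a proper concave term on the reduced domain $\alpha_1+\alpha_2=0$, as guaranteed by Proposition~\ref{prop:fenchel} and Remark~\ref{rem:fenchelCases}.
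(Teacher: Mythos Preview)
Your proposal is correct and follows essentially the same approach as the paper, which simply invokes Sion's minimax theorem (Corollary~3.3 in \cite{Sion58}) using the convexity of $f^*$ and the compactness of $\mathcal{X}$. You are in fact more careful than the paper about the unboundedness of $t$: the paper cites only the compactness of $\mathcal{X}$, whereas you correctly note that one should first restrict $t$ to a compact box (harmlessly, since each $t_k$ appears with positive coefficient and is bounded below via the epigraph constraint over the bounded $x$).
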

\begin{proof}
Problem \eqref{eq:convexification_dual} is obtained from \eqref{eq:convexification_fenchel} by interchanging $\min$ and $\max$. Equality between the two problems follows from Corollary 3.3 in \cite{Sion58}, because $f^*$ is convex (since by definition it is a supremum of affine functions) and $\mathcal{X}$ is compact. 
\end{proof}

\subsection{Subgradient algorithm}\label{subsec3.2}

Note that for any fixed $(\alpha,\beta)$, the inner minimization problem in \eqref{eq:convexification_dual} decomposes into independent subproblems, each involving variables \linebreak $(x[\tau_{k}-1, \tau_{k+1}],  z[\tau_{k}-1,\tau_{k+1}], t_k)$. Moreover, each subproblem corresponds to an optimization problem over $\text{cl }\conv(X(\tau_{k}-1,\tau_{k+1}))$, equivalently, optimization over $X(\tau_{k}-1,\tau_{k+1})$. Therefore, it can be solved in $\mathcal{O}(n^2)$ time using Algorithm~\ref{alg:SP}. Furthermore, the outer maximization problem is concave in $(\alpha,\beta)$ since $h(\alpha,\beta)$ is the sum of the concave functions $- f^*(\alpha_{ij},\beta_{ij,i},\beta_{ij,j})$ and an infimum of affine functions, thus it can in principle be optimized efficiently. We now discuss how to solve the latter problem via a subgradient method.

Similar to Lagrangian decomposition methods for mixed-integer linear optimization \cite{wolsey1999integer}, subgradients of function $h$ can be obtained directly from optimal solutions of the inner minimization problem. Given any point $(\bar \alpha$, $\bar \beta_{1},\bar \beta_{2})\in \R^3$, denote by $\partial f^*(\bar \alpha, \bar \beta_{1},\bar \beta_{2}))$ the subdifferential of $f^*$ at that point. In other words, $(\xi(\bar \alpha),\xi(\bar \beta_{1}),\xi(\bar \beta_{2})))\in \partial f^*(\bar \alpha, \bar \beta_{1},\bar\beta_{2}))$ implies \begin{align}& \notag\\
 f^*(\alpha,\beta_{1},\beta_{2})\geq f^*(\bar \alpha,\bar \beta_{1},\bar \beta_{2})+\xi(\bar \alpha)(\alpha-\bar \alpha)+\xi(\bar \beta_{1})(\beta_1-\bar \beta_1)+\xi(\bar \beta_{2})(\beta_2-\bar \beta_2)\label{eq:subgradientF}
\end{align}
for all $(\alpha,\beta_1,\beta_2)\in \R^3$. The next proposition  shows that subgradients $\rho(\alpha,\beta)\in \R^{(3/2)(n-1)(n-2)}$ of function $h(\alpha,\beta)$ (for maximization) can be obtained from subgradients of $f^*$ and optimal solutions $(\bar x,\bar z)$ of the inner minimization problem in \eqref{eq:convexification_dual}, as
$$\begin{pmatrix}\rho(\alpha_{ij})\\
\rho(\beta_{ij,i})\\
\rho(\beta_{ij,j})
\end{pmatrix}=-\begin{pmatrix}\xi(\alpha_{ij})\\
\xi(\beta_{ij,i})\\
\xi(\beta_{ij,j})
\end{pmatrix}+\begin{pmatrix}
\bar x_i\pm\bar x_j\\
-\bar z_i\\
-\bar z_j
\end{pmatrix}.$$
Later, in Proposition~\ref{prop:subgradientF}, we explicitly describe the subgradients $\xi$ of $f^*$. 

\begin{proposition}\label{prop:subgradientH}
Given any $(\bar\alpha,\bar\beta)\in \R^{(3/2)(n-1)(n-2)}$, let $(\bar x,\bar z,\bar t)$ denote an optimal solution of the associated minimization problem \eqref{eq:convexification_dual}, and let $$(\xi(\bar \alpha_{ij}),\xi(\bar \beta_{ij,i}),\xi(\bar \beta_{ij,j}))\in \partial f^*(\bar \alpha_{ij}, \bar \beta_{ij,i},\bar\beta_{ij,j})$$ for all $j>i+1$. Then for any  $(\alpha,\beta)\in \R^{(3/2)(n-1)(n-2)}$, 
\begin{align*}h(\alpha,\beta)&\leq h(\bar\alpha,\bar\beta)+\frac{1}{2}\sum_{i=1}^n\sum_{j=i+2}^n|Q_{ij}|\left(-\xi(\bar \alpha_{ij})+\bar x_i\pm \bar x_j\right)(\alpha_{ij}-\bar \alpha_{ij})\\
+ &\frac{1}{2}\sum_{i=1}^n\sum_{j=i+2}^n |Q_{ij}| \left(-\xi(\bar \beta_{ij,i})-\bar z_i\right)(\beta_{ij,i}-\bar \beta_{ij,i})\\
+ &\frac{1}{2}\sum_{i=1}^n\sum_{j=i+2}^n |Q_{ij}| \left(-\xi(\bar \beta_{ij,j})-\bar z_j\right)(\beta_{ij,j}-\bar \beta_{ij,j}).
\end{align*}
\end{proposition}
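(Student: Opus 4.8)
The plan is to exploit the fact that $h$ is a sum of two concave pieces in $(\alpha,\beta)$ and to produce a supergradient (``subgradient for maximization'') for each piece separately, then add them. Writing $h(\alpha,\beta)=h_1(\alpha,\beta)+h_2(\alpha,\beta)$ with
\[
h_1(\alpha,\beta)=-\frac{1}{2}\sum_{i=1}^n\sum_{j=i+2}^n|Q_{ij}|\,f^*(\alpha_{ij},\beta_{ij,i},\beta_{ij,j}),\qquad
h_2(\alpha,\beta)=\min_{x,z,t}\,\psi(x,z,t,\alpha,\beta),
\]
the claimed inequality will follow once I bound $h_1$ and $h_2$ above by affine functions tangent at $(\bar\alpha,\bar\beta)$.

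For $h_1$, I would invoke the defining subgradient inequality \eqref{eq:subgradientF} for the convex function $f^*$ at each $(\bar\alpha_{ij},\bar\beta_{ij,i},\bar\beta_{ij,j})$. Because each scalar coefficient $-\tfrac{1}{2}|Q_{ij}|$ is nonpositive, multiplying \eqref{eq:subgradientF} by $-\tfrac{1}{2}|Q_{ij}|$ reverses the inequality and yields the upper bound
\[
h_1(\alpha,\beta)\leq h_1(\bar\alpha,\bar\beta)-\frac{1}{2}\sum_{i=1}^n\sum_{j=i+2}^n|Q_{ij}|\Big(\xi(\bar\alpha_{ij})(\alpha_{ij}-\bar\alpha_{ij})+\xi(\bar\beta_{ij,i})(\beta_{ij,i}-\bar\beta_{ij,i})+\xi(\bar\beta_{ij,j})(\beta_{ij,j}-\bar\beta_{ij,j})\Big),
\]
which contributes exactly the terms $-\xi(\bar\alpha_{ij})$, $-\xi(\bar\beta_{ij,i})$, $-\xi(\bar\beta_{ij,j})$ to the respective subgradient components.

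For $h_2$, the key observation is that the feasible set of the inner minimization does not depend on $(\alpha,\beta)$, so the optimizer $(\bar x,\bar z,\bar t)$ for $(\bar\alpha,\bar\beta)$ remains feasible for every $(\alpha,\beta)$; hence $h_2(\alpha,\beta)\leq\psi(\bar x,\bar z,\bar t,\alpha,\beta)$. Since $\psi$ is \emph{affine} in $(\alpha,\beta)$ for fixed $(x,z,t)$, and since $h_2(\bar\alpha,\bar\beta)=\psi(\bar x,\bar z,\bar t,\bar\alpha,\bar\beta)$ by optimality, this gives
\[
h_2(\alpha,\beta)\leq h_2(\bar\alpha,\bar\beta)+\big(\psi(\bar x,\bar z,\bar t,\alpha,\beta)-\psi(\bar x,\bar z,\bar t,\bar\alpha,\bar\beta)\big).
\]
The difference on the right is linear in $(\alpha-\bar\alpha,\beta-\bar\beta)$, and reading off the coefficients from the definition of $\psi$ shows that the coefficient of $\alpha_{ij}-\bar\alpha_{ij}$ is $\tfrac{1}{2}|Q_{ij}|(\bar x_i\pm\bar x_j)$, that of $\beta_{ij,i}-\bar\beta_{ij,i}$ is $-\tfrac{1}{2}|Q_{ij}|\bar z_i$, and that of $\beta_{ij,j}-\bar\beta_{ij,j}$ is $-\tfrac{1}{2}|Q_{ij}|\bar z_j$. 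Adding the bounds on $h_1$ and $h_2$ produces exactly the stated inequality.

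The only delicate point — and the main place to be careful rather than a genuine obstacle — is the bookkeeping of how each dual variable enters $\psi$: $\alpha_{ij}$ appears both in the coefficient of $x_i$ (with a $+$) and in the coefficient of $x_j$ (with the sign dictated by the ``$\pm$'' of the term $|Q_{ij}|(x_i\pm x_j)^2$), so its total coefficient is $\tfrac{1}{2}|Q_{ij}|(x_i\pm x_j)$, matching the $\pm$ convention in the statement, while $\beta_{ij,i}$ and $\beta_{ij,j}$ enter only through $z_i$ and $z_j$ respectively. Verifying that these coefficients combine correctly with the $-\xi$ terms from $h_1$ is the whole content of the argument; no compactness or minimax duality is needed here, since feasibility of $(\bar x,\bar z,\bar t)$ across all $(\alpha,\beta)$ already delivers the one-sided bound required for a supergradient.
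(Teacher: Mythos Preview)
Your proposal is correct and follows essentially the same approach as the paper: both use the subgradient inequality for $f^*$ to bound the first summand from above, then bound the inner minimization by evaluating $\psi$ at the fixed point $(\bar x,\bar z,\bar t)$ and exploit affineness of $\psi$ in $(\alpha,\beta)$. The only difference is presentational---you explicitly split $h=h_1+h_2$ and treat each piece, whereas the paper writes the same two steps as a single chain of inequalities.
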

\begin{proof}
Given $(\alpha,\beta)\in \R^{(3/2)(n-1)(n-2)}$, let $(x^*,y^*,t^*)$ be the associated solution of the inner minimization problem \eqref{eq:convexification_dual}. 
Then we deduce that
{\small
\begin{align*}
    h(\alpha,\beta)=&-\frac{1}{2}\sum_{i=1}^n\sum_{j=i+2}^n|Q_{ij}| f^*(\alpha_{ij},\beta_{ij,i},\beta_{ij,j})+\psi(x^*,z^*,t^*; \alpha,\beta) \\
    \leq& -\frac{1}{2}\sum_{i=1}^n\sum_{j=i+2}^n|Q_{ij}|\Bigg( f^*(\bar\alpha_{ij},\bar\beta_{ij,i},\bar\beta_{ij,j})+\xi(\bar \alpha_{ij})(\alpha_{ij}-\bar \alpha_{ij})+\xi(\bar \beta_{ij,i})(\beta_{ij,i}-\bar \beta_{ij,i})\\
    &\qquad+\xi(\bar \beta_{ij,j})(\beta_{ij,j}-\bar \beta_{ij,j})\Bigg)
    +\psi(x^*,z^*,t^*; \alpha,\beta)\\
    \leq& -\frac{1}{2}\sum_{i=1}^n\sum_{j=i+2}^n|Q_{ij}|\Bigg( f^*(\bar\alpha_{ij},\bar\beta_{ij,i},\bar\beta_{ij,j})+\xi(\bar \alpha_{ij})(\alpha_{ij}-\bar \alpha_{ij})+\xi(\bar \beta_{ij,i})(\beta_{ij,i}-\bar \beta_{ij,i})\\
    &\qquad+\xi(\bar \beta_{ij,j})(\beta_{ij,j}-\bar \beta_{ij,j})\Bigg)
    +\psi(\bar x,\bar z,\bar t; \alpha,\beta)\\
     =& -\frac{1}{2}\sum_{i=1}^n\sum_{j=i+2}^n|Q_{ij}|\Bigg( f^*(\bar\alpha_{ij},\bar\beta_{ij,i},\bar\beta_{ij,j})+\big(\xi(\bar \alpha_{ij})+\bar x_i\pm \bar x_j\big)(\alpha_{ij}-\bar \alpha_{ij})\\
    &\qquad+\big(\xi(\bar \beta_{ij,i})-\bar z_i\big)(\beta_{ij,i}-\bar \beta_{ij,i})
    +\big(\xi(\bar \beta_{ij,j})-\bar z_j\big)(\beta_{ij,j}-\bar \beta_{ij,j})\Bigg)
    +\psi(\bar x,\bar z,\bar t; \bar\alpha,\bar\beta),
\end{align*}}
where the first inequality follows from \eqref{eq:subgradientF}, the second inequality follows since $(x^*, z^*,t^*)$ is a minimizer of $\psi(\cdot, \alpha,\beta)$ whereas $(\bar x, \bar z,\bar t)$ may not be, and the last equality follows since $\psi$ is linear in $(\alpha,\beta)$ for fixed $(\bar x, \bar z,\bar t)$. The conclusion follows. 
\end{proof}

\begin{proposition}\label{prop:subgradientF}
A subgradient of function $f^*$ admits a closed form solution as
\begin{align*}
    &(\xi(\alpha_{ij}),\xi(\beta_{ij,i}),\xi(\beta_{ij,j}))\\
    &=\left\{
\begin{array}{l@{\quad}l}
(0,0,0) &\text{if }\beta_{ij,i},\beta_{ij,j}>\alpha_{ij}^2/4,\\
(\alpha_{ij}/2,-1,0)  &\text{if }\beta_{ij,i}\leq\alpha_{ij}^2/4\ \&\ \beta_{ij,j}\geq0\text\ \&\ \beta_{ij,j}\geq\beta_{ij,i},\\
(\alpha_{ij}/2,0,-1)  &\text{if }\beta_{ij,j}\leq\alpha_{ij}^2/4\ \&\ \beta_{ij,i}\geq0\ \&\ \beta_{ij,i}>\beta_{ij,j},\\
(\alpha_{ij}/2,-1,-1) &\text{if }\beta_{ij,i},\beta_{ij,j}<0.\\
\end{array}
\right.
\end{align*}
\end{proposition}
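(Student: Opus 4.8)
The plan is to derive the claimed subgradients directly from the piecewise description of $f^*$ given in Remark~\ref{rem:fenchelCases}. Since $f^*$ is a maximum of three smooth convex functions, namely $\phi_0 \equiv 0$, $\phi_1(\alpha,\beta_1,\beta_2) = \alpha^2/4 - \beta_1 - \beta_2$, and $\phi_2(\alpha,\beta_1,\beta_2) = \alpha^2/4 - \min\{\beta_1,\beta_2\}$, its subdifferential is the convex hull of the gradients of the pieces that attain the maximum at the given point. On the interior of each region identified in Remark~\ref{rem:fenchelCases}, exactly one piece is active, so the subgradient is simply the gradient of that active piece; I would verify that each claimed triple is exactly such a gradient.

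Concretely, I would proceed case by case, matching the four branches of the proposition to the three cases of Remark~\ref{rem:fenchelCases}. In the first case, $\min\{\beta_{ij,i},\beta_{ij,j}\} > \alpha_{ij}^2/4$, the active piece is $\phi_0 \equiv 0$, whose gradient is $(0,0,0)$, matching the first line. In the second case of the remark, $\max\{\beta_{ij,i},\beta_{ij,j}\} < 0$, the active piece is $\phi_1$, whose gradient is $(\alpha_{ij}/2, -1, -1)$, matching the fourth line of the proposition. In the remaining ``otherwise'' region the active piece is $\phi_2$, whose gradient depends on which of $\beta_{ij,i}, \beta_{ij,j}$ achieves the minimum: $\partial_{\alpha}\phi_2 = \alpha_{ij}/2$ always, while $\partial_{\beta_1}\phi_2 = -1, \partial_{\beta_2}\phi_2 = 0$ when $\beta_{ij,i} < \beta_{ij,j}$ and symmetrically otherwise. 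This produces the middle two lines, with the inequalities $\beta_{ij,j}\geq\beta_{ij,i}$ versus $\beta_{ij,i}>\beta_{ij,j}$ selecting the coordinate where $\min$ is attained. I would check that the union of the four regions described in the proposition covers $\R^3$ and that the boundary ties are resolved consistently (e.g.\ the tie $\beta_{ij,i}=\beta_{ij,j}$ is assigned to the second line), which is legitimate because at such boundaries the subdifferential is a whole segment and any selection from it is a valid subgradient.

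The one subtlety I would be careful about is the \emph{non-smoothness of $\min\{\beta_1,\beta_2\}$} inside the third piece $\phi_2$, together with the overlap conditions defining the regions. The regions in the proposition are stated with non-strict inequalities and therefore overlap on their boundaries; I must confirm that on every overlap the selected subgradient genuinely lies in $\partial f^*$. This is where I would appeal to the general fact that for a pointwise maximum of differentiable convex functions the subdifferential at a point is the convex hull of the gradients of the maximizing pieces, so any gradient of an active piece is an admissible subgradient. The main obstacle is thus purely bookkeeping: ensuring that the inequality system in each branch precisely isolates the corresponding active piece (or a valid boundary selection) and that the four branches are jointly exhaustive; the underlying differentiation is elementary.
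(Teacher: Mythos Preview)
Your proposal is correct and follows essentially the same approach as the paper: both derive the subgradients directly from the piecewise description of $f^*$ in Remark~\ref{rem:fenchelCases}. The paper's one-line proof phrases it as ``$f^*$ is the supremum of affine functions'' and reads off the subgradient from the maximizing $(x^*,z^*)$ listed in each case of the remark, whereas you differentiate the explicit piecewise formula; the resulting computation is identical, and your more detailed case analysis (including the handling of the non-smooth $\min$ in $\phi_2$ and the boundary overlaps) simply unpacks what the paper leaves implicit.
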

\begin{proof}
The result follows since $f^*$ is the supremum of affine functions described in Remark~\ref{rem:fenchelCases}. 
\end{proof}

Algorithm~\ref{alg:Decomposition} states the proposed method to solve problem \eqref{eq:convexification_dual}. Initially, $(\alpha,\beta)$ is set to zero (line~\ref{line:decomposition_init}). Then, at each iteration: a primal solution $(\bar x,\bar z,\bar t)$ of \eqref{eq:convexification_dual} is obtained  by solving $\ell$ independent problems of the form \eqref{eq:tridiagonalMIQO}, using Algorithm~\ref{alg:SP} (line~\ref{line:decomposition_primal}); a subgradient of function $h$ is obtained directly from $(\bar x,\bar z)$ using Propositions~\ref{prop:subgradientH} and \ref{prop:subgradientF} (line \ref{line:decomposition_subgradient}); finally, the dual solution is updated using first order information (line~\ref{line:decomposition_dual}). 

\begin{algorithm}[h]
	\caption{Subgradient ascent }	
	\label{alg:Decomposition} 
	\begin{algorithmic}[1]
	\renewcommand{\algorithmicrequire}{\textbf{Input:}}
	\renewcommand{\algorithmicensure}{\textbf{Output:}}
	\Require $a,c\in \R^n$, $Q\succ 0$ diagonally dominant.
	\Ensure $\zeta_p$.
		\State $(\alpha,\beta)\leftarrow (0,0)$ \Comment{Initialize}\label{line:decomposition_init}
		\State $k\leftarrow 1$ \Comment{Iteration counter}
		\Repeat
		\State $(\bar x,\bar z,\bar t)\in {\arg\min}_{x,z,t}\psi(x,z,t;\alpha,\beta)$\Comment{Algorithm~\ref{alg:SP}} \label{line:decomposition_primal}
		\State $\rho(\bar x,\bar z)\in \partial h(\alpha,\beta)$\Comment{Propositions~\ref{prop:subgradientH} and \ref{prop:subgradientF}} \label{line:decomposition_subgradient}
		\State $(\alpha,\beta)\leftarrow (\alpha,\beta)+s_k \rho(\bar x,\bar z)/\|\rho(\bar x,\bar z)\|_2$ \Comment{$s_k=$Step size at iteration $k$} \label{line:decomposition_dual}
		\State $k\leftarrow k+1$ \Comment{Iteration counter}
		\Until{Termination criterion is met}\label{line:decomposition_termination}
		\State \Return $h(\alpha,\beta)$
	\end{algorithmic}
\end{algorithm}

We now discuss some implementation details. First, note that in line~\ref{line:decomposition_init}, $(\alpha,\beta)$ can in fact be initialized to an arbitrary point without affecting correctness of the algorithm. Nonetheless, by initializing at zero, we ensure that the first iteration of Algorithm~\ref{alg:Decomposition} corresponds to solving the relaxation obtained by completely dropping the complicating quadratic terms; see Example~\ref{ex:tree}. Second, each time a primal solution is obtained (line~\ref{line:decomposition_primal}), a lower bound $h(\alpha,\beta)\leq\zeta_p\leq \zeta^*$ can be computed. Moreover, since the solution $(\bar x,\bar z)$ is feasible for \eqref{eq:ddMiqoOrdered}, an upper bound $\bar \zeta(\bar x,\bar z)\geq \zeta^*$ can be obtained by simply evaluating the objective function \eqref{eq:ddMiqoOrderedobj}. Thus, at each iteration of Algorithm~\ref{alg:Decomposition}, an estimate of the optimality gap of $(\bar x,\bar z)$ can be computed as $\texttt{gap}=(\bar \zeta(\bar x,\bar z)-h(\alpha,\beta))/\bar \zeta(\bar x,\bar z)$. Third, a natural termination criterion (line~\ref{line:decomposition_termination}) we use in our experiments is to terminate if $\texttt{gap}\leq \epsilon$ for some predefined optimality tolerance $\epsilon$, or if $k\geq \bar k$ for some iteration limit $\bar k$. Observe that a termination criterion in addition to the optimality gap is indeed required: if $\zeta_p+\epsilon<\zeta^*$, then the estimated optimality gap will never be less than $\epsilon$. 
Nonetheless, it is worth pointing out that under mild conditions (such as diminishing nature of the step sizes), the subgradient method is guaranteed to converge to the global maximizer of $h(\alpha,\beta)$ at a sublinear rate; see~\cite{nesterov1983method,boyd2003subgradient,nesterov2009primal} for more details on the convergence rate of subgradient method.

We close this section by revisiting Example~\ref{ex:tree}, demonstrating that Algorithm~\ref{alg:Decomposition} can indeed achieve substantially improved optimality gaps.
\setcounter{example}{0}
\begin{example}[Continued]
 The Fenchel dual of \eqref{example1} is
\begin{subequations}\label{fenchelrelaxation}
\begin{align}
    \zeta_p=\max_{\alpha,\beta}&-0.4f^*(\alpha,\beta_1,\beta_2)+\min_{x,z}\bigg\{-1.3x_1+(-2.5+0.4\alpha)x_2+4.6x_3\\
    &-(7.8+0.4\alpha)x_4+3x_1^2+5.6x_2^2+3x_3^2+1.6x_4^2-1.5x_1x_2\\
    &-x_2x_3+2z_1+(2-0.4\beta_1)z_2+2z_3+(2-0.4\beta_2)z_4\bigg\}\\
    \text{s.t.}\;& x_i(1-z_i)=0,z_i\in\{0,1\}, \qquad i=1,\dots,4.
\end{align}
\end{subequations}
Table~\ref{tableFenchelexample1} shows the first nine iterations of Algorithm~\ref{alg:Decomposition}.
\begin{table}[htb]
\centering
\caption{Algorithm~\ref{alg:Decomposition} applied to problem \eqref{fenchelrelaxation} with $s_k=\frac{1}{1.01^k}$.}\label{tableFenchelexample1}
\begin{tabular}{c|c|c|c|c|c|c} \hline
Iteration & $\bar x$ & $\alpha$ & $\beta_{1}$ & $\beta_{2}$ & $\zeta_p$ & Gap\\ \hline
1 & (0,0,-1.53,6.50) & 0 & 0 & 0 & -24.87 & 67.93\%\\ 
2 & (0,0,-1.53,6.16) & -1.0 & 0.25 & 0 & -22.44 & 57.23\%\\ 
3 & (0,0,-1.53,5.83) & -1.99 & 0.25 & 0 & -20.36 & 46.04\%\\ 
4 & (0,0,-1.53,5.50) & -2.97 & 0.25 & 0 & -18.62 & 34.78\%\\ 
5 & (0,0,-1.53,5.18) & -3.94 & 0.25 & 0 & -17.21 & 24.02\%\\ 
6 & (0,0,-1.53,4.86) & -4.90 & 0.25 & 0 & -16.13 & 14.45\%\\ 
7 & (0,0,-1.53,4.54) & -5.85 & 0.25 & 0 & -15.36 & 6.84\%\\ 
8 & (0,0,-1.53,4.23) & -6.79 & 0.25 & 0 & -14.90 & 1.88\%\\ 
9 & (0,0,-1.53,3.92) & -7.72 & 0.25 & 0 & -14.73 & $<$0.01\%\\ \hline
\end{tabular} 
 
\end{table}
\end{example}

\section{Path Decomposition}\label{sec:decomposition}

In the previous section, we showed that if $Q$ does not possess a tridiagonal structure, then it is possible to relax its ``problematic'' elements via their Fenchel duals, and leverage Algorithm~\ref{alg:SP} to solve the resulting relaxation. In this section, our goal is to explain how to select the nonzero elements of $Q$ to be relaxed via our proposed method. In particular, our goal is to obtain the best permutation matrix $P$ such that $PQP^\top$ is close to tridiagonal.

To achieve this goal, we propose a path decomposition method over $\mathcal{G}$, where the problem of finding the best permutation matrix for $Q$ is reformulated as finding a maximum weight subgraph of $\mathcal{G}$, denoted as  $\widetilde{\mathcal{G}}$, that is a union of paths. In particular, define $y_{ij}$ as an indicator variable that takes the value 1 if and only if  edge $(i,j)$ is included in the subgraph. Therefore, the problem of finding $\widetilde{\mathcal{G}}$  reduces to:
\begin{subequations}\label{path_decomp}
\begin{align}
    p^*=\max &\sum_{(i,j) \in E}{|Q_{ij}|y_{ij}}\\
s.t.&\sum_{j \in \delta (i)}{y_{ij}} \leq 2, && \forall i = 1,2,\dots, n\label{eq_degree2}\\
&\sum_{i,j\in S}y_{ij}\leq |S|-1 && \forall S\subseteq \{1,2,\dots,n\}\label{eq_noclyce}\\
 &y_{ij} \in \{0,1\}, &&  \forall (i,j) \in E.
\end{align}
\end{subequations}
where $\delta(i)$ denotes the neighbors of node $i$ in $\mathcal{G}$. Let the objective function evaluated at a given $y$ be denoted as $p(y)$. Moreover, let $y^*$ and $p^*$  be an optimal solution and its corresponding objective value respectively.
Constraints~\eqref{eq_degree2} ensure that the constructed graph is the union of cycles and paths, whereas  constraints~\eqref{eq_noclyce} are cycle-breaking constraints~\cite{kruskal1956shortest,magnanti1995optimal,wolsey2020integer}. Despite the exponential number of constraints \eqref{eq_noclyce}, it is known that cycle elimination constraints can often be efficiently separated~\cite{wolsey2020integer}. Nonetheless, our next result shows that  problem~\eqref{path_decomp} is indeed NP-hard.
\begin{theorem}
Problem~\eqref{path_decomp} is NP-hard.
\end{theorem}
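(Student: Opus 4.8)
The plan is to reduce from the \textsc{Hamiltonian Path} problem, which is NP-complete on general undirected graphs. First I would establish the structural fact that the feasible region of \eqref{path_decomp} is precisely the set of \emph{linear forests} of $\mathcal{G}$, that is, the spanning subgraphs that are vertex-disjoint unions of simple paths. Indeed, constraint \eqref{eq_degree2} caps every vertex degree at two, while the cycle-breaking constraints \eqref{eq_noclyce} forbid every cycle; a subgraph with maximum degree two and no cycle is exactly a disjoint union of simple paths (isolated vertices counting as trivial paths). No minimum-degree requirement is imposed, so isolated vertices are permitted.

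Given an arbitrary instance $G=(V,E)$ of \textsc{Hamiltonian Path} with $|V|=n$, I would construct the instance of \eqref{path_decomp} by setting $\mathcal{G}=G$ and assigning unit weights $|Q_{ij}|=1$ to every edge $(i,j)\in E$. Under unit weights the objective $\sum_{(i,j)\in E}|Q_{ij}|y_{ij}$ simply counts the number of selected edges, so $p^*$ equals the maximum number of edges attainable by any linear forest of $G$. Here the key identity is that a linear forest on $n$ vertices decomposing into $c$ path components contains exactly $n-c$ edges, because an acyclic subgraph with $c$ components on $n$ vertices has $n-c$ edges. Since $c\ge 1$, this quantity is at most $n-1$, with equality attained precisely when the forest consists of a single path visiting all $n$ vertices, i.e.\ a Hamiltonian path of $G$.

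Consequently $p^*=n-1$ if and only if $G$ admits a Hamiltonian path, and otherwise $p^*\le n-2$. A polynomial-time algorithm for \eqref{path_decomp} would then decide \textsc{Hamiltonian Path} in polynomial time by comparing the optimal value $p^*$ against $n-1$. The construction itself is clearly polynomial (it copies $G$ and assigns $|E|$ unit weights), so this is a valid polynomial-time reduction and \eqref{path_decomp} is NP-hard.

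The only step demanding real care is the first one: verifying that \eqref{eq_degree2}--\eqref{eq_noclyce} together carve out exactly the linear forests of $\mathcal{G}$, rather than some larger or smaller family. Once this characterization is in place, the edge-count identity $n-c$ and its equivalence with Hamiltonicity are immediate. I do not expect a genuine obstacle here, since the correspondence between acyclic degree-two subgraphs and vertex-disjoint path unions is standard; the main point to state precisely is the relation between the number of selected edges and the number of path components, from which the whole argument follows.
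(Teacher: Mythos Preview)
Your proposal is correct and follows essentially the same approach as the paper: both reduce from \textsc{Hamiltonian Path} by assigning unit weights $|Q_{ij}|=1$ to all edges of the input graph and argue that the optimal value equals $n-1$ if and only if a Hamiltonian path exists. Your version is slightly more explicit in characterizing the feasible region as the set of linear forests and invoking the edge-count identity $n-c$, whereas the paper argues the same equivalence more directly via the component count of the optimal subgraph; the underlying idea is identical.
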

\begin{proof}
We use a reduction from Hamiltonian path problem: given an arbitrary (unweighted) graph $\mathcal{G}$, the Hamiltonian path problem asks whether there exists a simple path that traverses every node in $\mathcal{G}$. It is known that Hamiltonian path problem is NP-complete~\cite{garey1979computers}. 

Given an arbitrary graph $\mathcal{G}(N,E)$, construct an instance of~\eqref{path_decomp} with $|Q_{ij}| = 1$ if $(i,j)\in E$, and $|Q_{ij}|=0$ otherwise. 
Let us denote the optimal solution to the constructed problem as $y^*$, and the graph induced by this solution as ${\mathcal{G}^*}(N, E^*)$. In other words, $(i,j)\in E^*$ if and only if $y^*_{ij}=1$. It is easy to see that $\sum_{(i,j)\in E}y^*_{ij}\leq n-1$; otherwise, the graph ${\mathcal{G}}^*$ contains a cycle, which is a contradiction. Therefore, we have $p(y^*)\leq n-1$.  We show that, we have $p(y^*) = n-1$ if and only if $\mathcal{G}$ contains a Hamiltonian path. This immediately completes the proof.

First, suppose that $\mathcal{G}$ has a Hamiltonian path. Therefore, there exists a path in $\mathcal{G}$ with exactly $n-1$ edges. A solution $\tilde{y}$ defined as  $\tilde y_{ij}=1$ for every edge $(i,j)$ in the path, and $\tilde y_{ij}=0$ otherwise is feasible for the constructed instance of~\eqref{path_decomp}, and it has the objective value $p(\tilde y) = p(y^*) = n-1$. Conversely, suppose that $p(y^*) = n-1$. Then, the graph ${\mathcal{G}}^*$ has exactly $n-1$ edges, and it is a union of paths. It is easy to see that if ${\mathcal{G}}^*$ has at least two components, then $|E^*|<n-1$, which is a contradiction. Therefore, ${\mathcal{G}}^*$ is a Hamiltonian path.
\end{proof}

Due to hardness of~\eqref{path_decomp}, we propose in this section an approximation algorithm based on the following idea:
\begin{enumerate}
\item Find a vertex disjoint path/cycle cover of $\mathcal{G}$, that is, a subset $\widetilde{E}$ of the edges of $E$ such that, in the induced subgraph of $\mathcal{G}$, each connected component is either a cycle or a path.
\item From each cycle, remove the edge $(i,j)\in \widetilde{E}$ with least value $|Q_{ij}|$.
\end{enumerate}

Note that a vertex disjoint cycle cover can be found by solving a bipartite matching problem \cite{tutte1954short} on an auxiliary graph, after using a node splitting technique. Specifically, create graph $\mathcal{G}_M=(V_M,E_M)$ with $V_M=N\cup \left\{1',2',\dots,n'\right\}$ and $E_M$ that is determined as follows: if $(i,j)\in E$, then $(i,j')\in E_M$ and $(i',j)\in E_M$. Then any matching on $\mathcal{G}_M$ corresponds to a cycle cover in $\mathcal{G}$, with edge $(i,j')$ in the matching encoding that ``$j$ follows $i$ in a cycle." Figure~\ref{fig:matching} illustrates how to obtain cycle covers via bipartite matchings. 

\begin{figure}[h!]
\centering
\includegraphics[trim={6cm 0.5cm 11cm 0.5cm}, clip, width=0.3\textwidth]{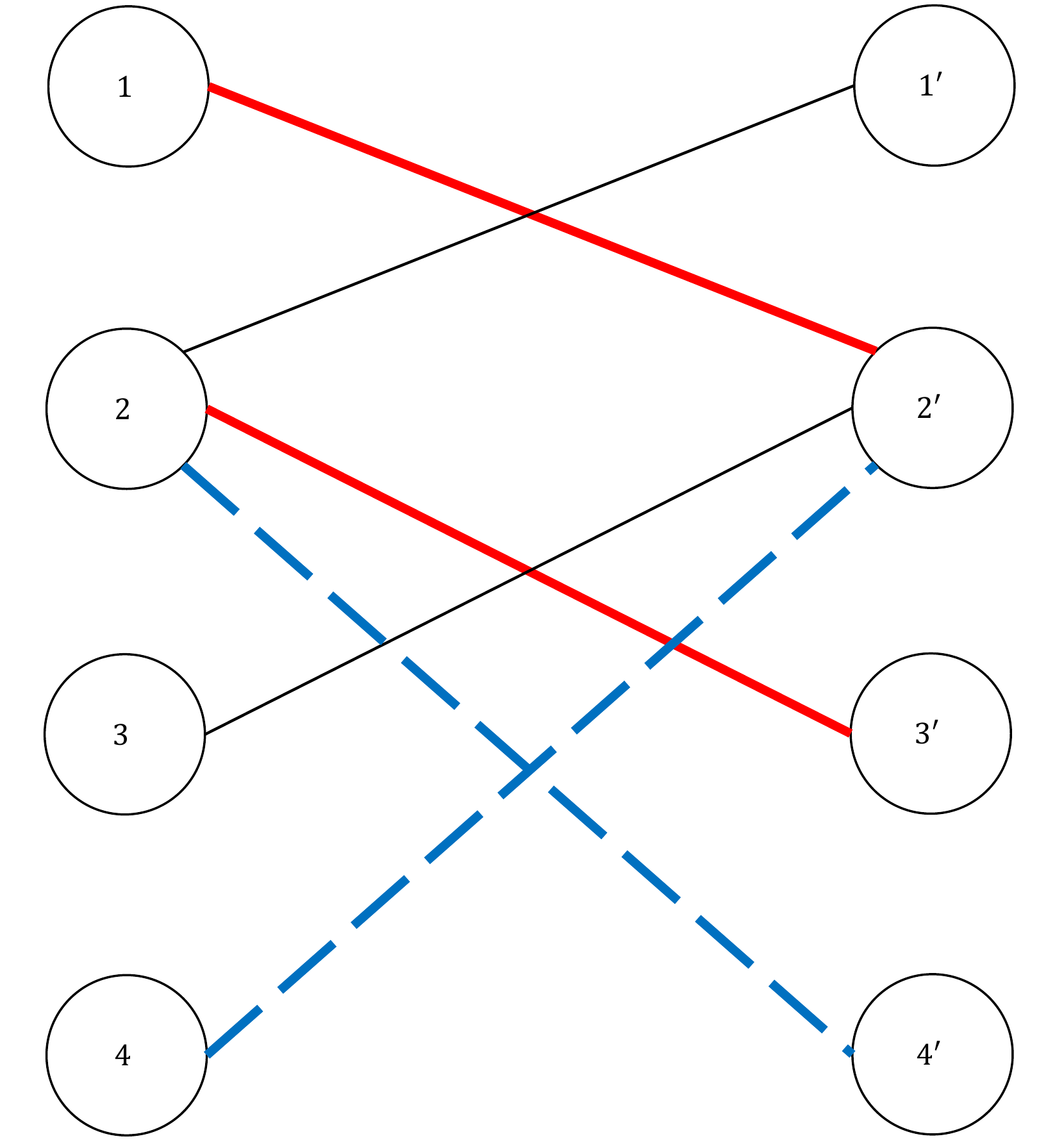}
\caption{Graph $\mathcal{G}_M$ corresponding to the support graph in Figure~\ref{fig:4pointgraph} (left). Bold red: Max. cardinality matching corresponding to the decomposition shown in Figure~\ref{fig:4pointgraph} (right). Dashed blue: Alternative max. cardinality matching corresponding to using edge $(2,4)$ twice, resulting in a length two cycle.}
\label{fig:matching}
\end{figure}

In the resulting decomposition from this method, each connected component is a cycle, possibly of length two (that is, using the same edge twice). However, in inference problems with graphical models, graph $\mathcal{G}$ is often bipartite (see Figure~\ref{fig: 2D}), in which case we propose an improved method which consists of solving the integer optimization problem
\begin{subequations}\label{cycle_decomp}
\begin{align}
    \max &\sum_{(i,j) \in E}{|Q_{ij}|y_{ij}}\\
s.t.&\sum_{j \in \delta (i)}{y_{ij}} \leq 2, && \forall i = 1,2,\dots, n\label{cycle_degree2}\\
 &y_{ij} \in \{0,1\}, &&  \forall (i,j) \in E.
\end{align}
\end{subequations}
Problem~\eqref{cycle_decomp} is obtained from~\eqref{path_decomp} after dropping the cycle elimination constraints~\eqref{eq_noclyce}. Note that in any feasible solution of \eqref{cycle_decomp} each edge can be used only once, thus preventing cycles of length two. Moreover, some of the connected components may already be paths.
Finally, we note that~\eqref{cycle_decomp} is much simpler than~\eqref{path_decomp}, since it can be solved  in polynomial time for certain  graph structures.

\begin{proposition}
For bipartite $\mathcal{G}$, the linear programming relaxation of the problem~\eqref{cycle_decomp} is exact.
\end{proposition}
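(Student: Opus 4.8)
The plan is to show that the feasible region of the LP relaxation of~\eqref{cycle_decomp} is an integral polyhedron by establishing total unimodularity of its constraint matrix. First I would write the relaxation explicitly as $\max\{\sum_{(i,j)\in E}|Q_{ij}|y_{ij}:\ Ay\leq \mathbf 2,\ 0\leq y\leq \mathbf 1\}$, where $A\in\{0,1\}^{n\times |E|}$ is the node--edge incidence matrix of $\mathcal G$---each column, indexed by an edge $(i,j)$, has exactly two unit entries, located in rows $i$ and $j$---and $\mathbf 1,\mathbf 2$ denote the all-ones and all-twos vectors. The degree constraints~\eqref{cycle_degree2} are exactly $Ay\leq\mathbf 2$, while relaxing $y\in\{0,1\}$ to $0\le y\le 1$ supplies the box constraints.

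The core of the argument is to prove that $A$ is totally unimodular (TU) when $\mathcal G$ is bipartite. Let $N=U\cup W$ be a bipartition. Because every edge of $\mathcal G$ joins a vertex of $U$ to a vertex of $W$, each column of $A$ has one of its two unit entries in a $U$-row and the other in a $W$-row. Invoking the classical sign-partition characterization of total unimodularity for $\{0,\pm1\}$ matrices with at most two nonzeros per column, it suffices to partition the rows into two groups so that the two (equally signed, here both $+1$) entries of every column lie in different groups; the partition $\{U,W\}$ achieves precisely this. Hence $A$ is TU.

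I would then observe that stacking $A$ with the identity rows arising from the bounds $y\le\mathbf 1$ and $-y\le\mathbf 0$ preserves total unimodularity, so the full constraint matrix is TU, and its right-hand side $(\mathbf 2,\mathbf 1,\mathbf 0)$ is integral. By the Hoffman--Kruskal theorem, the polyhedron $\{y:Ay\le\mathbf 2,\ 0\le y\le\mathbf 1\}$ then has only integral vertices. Consequently every optimal basic solution of the LP relaxation is integral and feasible for~\eqref{cycle_decomp}, so the relaxation is exact.

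The step demanding the most care is the verification of total unimodularity of $A$: one must cite the appropriate characterization and check that bipartiteness provides exactly the row split it requires (all nonzero entries of $A$ equal $+1$, so the two endpoints of each edge must be separated, which is the defining property of a bipartition). The remaining ingredients---closure of TU under appending identity rows, integrality of the right-hand side, and the Hoffman--Kruskal conclusion---are standard.
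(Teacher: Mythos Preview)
Your proposal is correct and follows essentially the same approach as the paper: both argue that the constraint matrix of~\eqref{cycle_decomp} is totally unimodular when $\mathcal G$ is bipartite, hence the LP relaxation has integral optimal solutions. The paper states this in a single sentence, while you have spelled out the details (identifying $A$ as the node--edge incidence matrix, invoking the sign-partition criterion, and appealing to Hoffman--Kruskal), but the underlying argument is identical.
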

\begin{proof}
It is easy to verify that the constraint matrix for~\eqref{cycle_decomp} is totally unimodular if $\mathcal{G}$ is bipartite, and therefore, the linear programming relaxation of~\eqref{cycle_decomp} has integer solutions. 
\end{proof}

Let the optimal solution to~\eqref{cycle_decomp} be denoted as $\hat y$. Define the weighted graph $\widehat{\mathcal{G}}(N, \widehat E)\subseteq \mathcal{G}(N, E)$ induced by $\hat y$ such that $(i,j)\in \widehat E$ with weight $|Q_{ij}|$ if and only if $\hat y_{ij} = 1$. Suppose that the graph $\widetilde{\mathcal{G}}(N, \widetilde E)$ is obtained after eliminating a single edge with smallest weight from every cycle of $\widehat{\mathcal{G}}$. Finally, define $\tilde y$ such that $\tilde y_{ij} = 1$ if $(i,j)\in \widetilde E$, and $\tilde y_{ij} = 0$ if $(i,j)\in E\backslash \widetilde{E}$. Next, we show that the above procedure leads to a $2/3$-approximation of~\eqref{path_decomp} in general, and a $3/4$-approximation for bipartite graphs.

\begin{theorem}
We have 
$\frac{p(\tilde{y})}{p^*}\leq\frac{2}{3}$. Moreover, if $\mathcal{G}$ is bipartite, then $\frac{p(\tilde{y})}{p^*}\leq\frac{3}{4}$.
\end{theorem}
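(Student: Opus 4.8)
The plan is to lower-bound $p(\tilde y)$ by chaining two inequalities: first relate it to the value $p(\hat y)$ of the relaxed problem \eqref{cycle_decomp}, then relate $p(\hat y)$ to $p^*$. Accordingly I would prove the meaningful direction $p(\tilde y)\geq \tfrac23 p^*$ (respectively $\geq\tfrac34 p^*$ in the bipartite case), i.e.\ that the procedure is a $2/3$- (resp.\ $3/4$-) approximation. The two ingredients are (i) $p(\hat y)\geq p^*$, and (ii) the edge-deletion step discards at most a $1/3$ fraction (resp.\ $1/4$) of the weight of $\widehat{\mathcal{G}}$.

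First I would observe that \eqref{cycle_decomp} is obtained from \eqref{path_decomp} merely by dropping the cycle-elimination constraints \eqref{eq_noclyce}; hence every solution feasible for \eqref{path_decomp} is feasible for \eqref{cycle_decomp}, and since both maximize the same objective we get $p(\hat y)\geq p^*$.

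The heart of the argument is the per-cycle loss bound. By constraints \eqref{cycle_degree2} and integrality, $\widehat{\mathcal{G}}$ is a vertex-disjoint union of paths and cycles, and, as already noted after \eqref{cycle_decomp}, no cycle has length two since each edge is used at most once. The deletion step keeps the path components untouched (they are already feasible for \eqref{path_decomp}) and removes from each cycle $C$ a single minimum-weight edge. If $C$ has $k\geq 3$ edges of total weight $W_C$, then the removed edge has weight at most $W_C/k\leq W_C/3$, because a minimum cannot exceed the average; thus $C$ retains at least $\tfrac23 W_C$. Summing over all components (paths retain their full weight, trivially more than $\tfrac23$ of it) yields
\[
p(\tilde y)\;\geq\;\tfrac23\,p(\hat y)\;\geq\;\tfrac23\,p^*,
\]
the general bound. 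For bipartite $\mathcal{G}$ there are no odd cycles, so every cycle of $\widehat{\mathcal{G}}$ has $k\geq 4$ edges; the identical averaging estimate gives a retention of at least $\tfrac34 W_C$ per cycle, hence $p(\tilde y)\geq \tfrac34\,p(\hat y)\geq \tfrac34\,p^*$.

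The main obstacle is not any single calculation but pinning down the structural facts so the ratios are tight and correct: that $\widehat{\mathcal{G}}$ is genuinely a disjoint union of paths and cycles, that the minimum cycle length is $3$ in general and $4$ in the bipartite case, and that the ``minimum $\le$ average'' step is applied with the right count (the number of edges in a cycle equals its number of vertices). These are precisely the points where a length-two ``cycle'' or an odd cycle in the bipartite setting would destroy the bound, so the exclusion of length-two cycles and the bipartiteness (no odd cycles) must both be invoked explicitly at the corresponding step.
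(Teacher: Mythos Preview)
Your proposal is correct and follows essentially the same approach as the paper: both use that \eqref{cycle_decomp} relaxes \eqref{path_decomp} to get $p(\hat y)\geq p^*$, then apply the per-cycle ``minimum $\leq$ average'' bound (cycles of length $\geq 3$ in general, $\geq 4$ when $\mathcal{G}$ is bipartite) to obtain $p(\tilde y)\geq \tfrac{2}{3}p(\hat y)$ (resp.\ $\tfrac{3}{4}$), and chain the two. You also correctly noted that the inequality in the theorem statement is reversed; the paper's own proof establishes the $\geq$ direction as well.
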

\begin{proof}
Recall that $\widehat{\mathcal{G}}$ is a union of paths and cycles. Let $\gamma$ and $\eta$  denote the number of path and cycle components in $\widehat{\mathcal{G}}$, respectively. Moreover,  let $\mathcal{P} = \{P_1,P_2,\dots, P_\gamma\}$ and $\mathcal{C} = \{C_1,C_2,\dots, C_\eta\}$ denote the set of paths and cycle components in $\widehat{\mathcal{G}}$. Clearly, we have $p(\hat{y})\leq p^*\leq p(\hat{y})$, since problem~\eqref{cycle_decomp} is a relaxation of~\eqref{path_decomp}, and $\hat{y}$ is a feasible solution to~\eqref{path_decomp}. On the other hand, we have
\begin{align}
    p(\hat{y}) &= \sum_{P_k\in\mathcal{P}}\sum_{(i,j)\in P_k}|Q_{ij}| + \sum_{C_k\in\mathcal{C}}\sum_{(i,j)\in C_k}|Q_{ij}|\nonumber\\
    &\geq p(\tilde{y})\nonumber\\
    &= \sum_{P_k\in\mathcal{P}}\sum_{(i,j)\in P_k}|Q_{ij}| + \sum_{C_k\in\mathcal{C}}\left(\left(\sum_{(i,j)\in C_k}|Q_{ij}|\right)-\min_{(i,j)\in C_k}|Q_{ij}|\right)\nonumber\\
    &\geq \sum_{P_k\in\mathcal{P}}\sum_{(i,j)\in P_k}|Q_{ij}| + \frac{2}{3}\sum_{C_k\in\mathcal{C}}\sum_{(i,j)\in C_k}|Q_{ij}|\nonumber\\
    &\geq \frac{2}{3}\left(\sum_{P_k\in\mathcal{P}}\sum_{(i,j)\in P_k}|Q_{ij}| + \sum_{C_k\in\mathcal{C}}\sum_{(i,j)\in C_k}|Q_{ij}|\right)\nonumber\\
    &=\frac{2}{3}p(\hat{y}),\nonumber
\end{align}
where in the second inequality, we used the fact that removing an edge with the smallest weight from a cycle can reduce the weight of that cycle by at most a factor of $\frac{2}{3}$. This implies that $\frac{2}{3} p(\hat{y})\leq p(\hat{y})\leq p^*\leq p(\hat{y})$, and hence, $\frac{p(\tilde{y})}{p^*}\leq\frac{2}{3}$. The last part of the theorem follows since bipartite graphs do not contain cycles of length 3, thus each cycle of $\widehat{\mathcal{G}}$ has length four or more. Therefore, removing an edge with the smallest weight from a cycle of a bipartite graph reduces the weight by at most a factor of $\frac{3}{4}$.
\hfill $\hfill\square$
\end{proof}

\begin{remark}
It can be easily shown that the procedure applied to a pure cycle cover of $\mathcal{G}$, including cycles of length 2, would lead to a 1/2-approximation. Thus the proposed method indeed delivers in theory higher quality solutions for bipartite graphs, reducing the optimality gap of the worst-case performance by half.\hfill \hfill \qed
\end{remark}

\section{Computational Results}\label{sec:computations}

We now report illustrative computational experiments showcasing the performance of the proposed methods. First, in Section~\ref{subsec:compute tridiagonal}, we demonstrate the performance of Algorithm~\ref{alg:SP} on instances with tridiagonal matrices. Then, in Section~\ref{subsec:compute graphical}, we discuss the performance of Algorithm~\ref{alg:Decomposition} on instances inspired by inference with graphical models.

\subsection{Experiments with tridiagonal instances}\label{subsec:compute tridiagonal}
In this section, we consider  instances with tridiagonal matrices. We  compare the performance Algorithm~\ref{alg:SP}, the direct $\mathcal{O}(n^3)$ method mentioned in the beginning of Section~\ref{sec:efficient_alg}, and the big-M mixed-integer nonlinear optimization formulation \eqref{eq:map_miqo}, solved using Gurobi v9.0.2. All experiments are run on a Lenovo laptop with a 1.9GHz Intel$\circledR$Core$^{\text{TM}}$ i7-8650U CPU and 16 GB main memory; for Gurobi, we use a single thread and a time limit of one hour, and stop whenever the optimality gap is 1\% or less. 

In the first set of experiments, we construct tridiagonal matrices $Q\in\mathbb{R}^{N\times N}$ and vectors $a,c\in\mathbb{R}^N$ randomly as $c=\text{Uniform}[-10,3], a=\text{Uniform}[0,1], \linebreak Q_{i,i+1}=\text{Uniform}[-2,2], Q_{ii}=|Q_{i,i-1}|+|Q_{i,i+1}|+\text{Uniform}[0,4]$. Table~\ref{tab:tridiagonal differnet N} reports the time in seconds required to solve the instances by each method considered, as well as the gap and the number of branch-and-bound nodes reported by Gurobi, for different dimensions $n\leq 200$. Each row represents the average over 10 instances generated with the same parameters. 

\begin{table}[bhtp]
\centering
\caption{Perfomance solving tridiagonal instances. }
\begin{threeparttable}
\begin{tabular}{c|c|c|c|c | c}\hline
\textbf{Metric}  &\textbf{Method}  & $\bm{n=10}$    & $\bm{n=50}$    & $\bm{n=100}$   & $\bm{n=200}$  \\ \hline
&Algorithm~\ref{alg:SP} & $<$0.1  & $<$0.1  & $<$0.1  & $<$0.1\\ 
Time(s) &Direct $\mathcal{O}(n^3)$& $<$0.1  & 0.2   & 1.6   & 16.6 \\ 
&Big-M  & $<$0.1  & 0.5   & 43.4  & TL \\ \hline
B\&B nodes&\multirow{2}{*}{Big-M} & 14    & 2,764  & 404,475 & 17,965,177\\
Gap & & 0\%     & 0\%     & $<$1.0\% & 2.5\% \\ \hline
\end{tabular}%
\begin{tablenotes}
\footnotesize
\item TL: Time Limit (1 hour).
\end{tablenotes}
\end{threeparttable}
\label{tab:tridiagonal differnet N}%
\end{table}%
As expected, mixed-integer optimization approaches struggle in instances with $n=200$, whereas the polynomial time methods are much faster. Moreover, as expected, Algorithm~\ref{alg:SP}, with worst-case complexity of $\mathcal{O}(n^2)$, is substantially faster than the direct $\mathcal{O}(n^3)$ method. To better illustrate the scalability of the proposed methods, we report in Figure~\ref{fig:Tridiagonal Time} the time used by the polynomial time methods to solve instances with $10\leq n\leq 10,000$. We see that the direct $\mathcal{O}(n^3)$ method requires over 10 minutes to solve instances with $n=500$, and over one hour for instances with $n\geq 1,000$. In contrast, the faster Algorithm~\ref{alg:SP} can solve instances with $n\leq 1,000$ in under one second, and instances with $n=10,000$ in less than one minute. We also see that the practical performance of both methods matches the theoretical complexity.

\begin{figure}[htbp]
\centering
\includegraphics[width=0.8\textwidth]{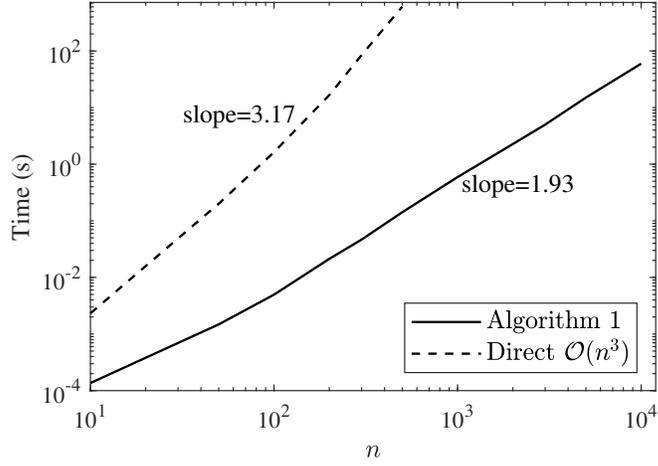}
\caption{Time (in seconds) required to solve tridiagonal problems with $10\leq n\leq 10,000$, in logarithmic scale.}
\label{fig:Tridiagonal Time}
\end{figure}

We also tested Algorithm~\ref{alg:SP} in inference problems with one-dimensional graphical models of the form \eqref{eq:time-series}, which are naturally tridiagonal. For this setting, we use the synthethic data $\{y_t\}_{t=1}^n\in \R^n$ with $n=1,000$ used in \cite{atamturk2021sparse}, available online at \url{https://sites.google.com/usc.edu/gomez/data}. Instances are classified according to a noise parameter $\sigma$, corresponding to the standard deviation of the noise $\epsilon_i$, see Section~\ref{sec:background} (all noise terms have the same variance). The results are reported in Table~\ref{tab:tridiagonal signal}.
\begin{table}[htbp]
\centering
\caption{Results with one-dimensional graphical models \eqref{eq:time-series}, $n=1,000$. We set $a_i=\mu$ for all $i\in N$, and choose $\mu$ so that approximately $\|x\|_0=0.1n$ in an optimal solution.}
\begin{tabular}{c c|c | c c c }\hline
\multirow{2}{*}{$\bm{\sigma}$} &  \multirow{2}{*}{$\bm{\mu}$} & \textbf{Algorithm~\ref{alg:SP}} & \multicolumn{3}{c}{\textbf{Big-M}} \\
& & \textbf{Time(s)}  & \textbf{Time(s)}  & \textbf{Gap}   & \textbf{Nodes} \\\hline
0.10   & 0.01  & 0.8  & TL  & 1.8\% & 4,034,520 \\
0.50   & 0.02  & 0.8  & TL  & 40.6\% & 4,474,262
\\
1.00   & 0.12  & 0.8  & TL  & 42.2\% & 3,749,981 \\\hline
\end{tabular}%
\label{tab:tridiagonal signal}%
\end{table}%

 Once again, Algorithm~\ref{alg:SP} is substantially faster than the big-M formulation solved using Gurobi. More interestingly perhaps are how the results reported here compare with those of \cite{atamturk2021sparse}. In that paper, the authors propose a conic quadratic relaxation of problem\footnote{They consider a slightly different term, where the sparsity is imposed via a cardinality constraint $a^\top z\leq k$ instead of a penalization in the objective.} \eqref{eq:time-series}, and solve this relaxation using the off-the-shelf solver Mosek. The authors report that solving this relaxation requires two seconds in these instances. Note that solution times are not directly comparable due to using different computing environments. Nonetheless, we see that, using Algorithm~\ref{alg:SP}, the mixed-integer optimization problem \eqref{eq:time-series} can be solved \emph{to optimality} in approximately the same time required to solve the (not necessarily tight) convex relaxation proposed in \cite{atamturk2021sparse}. Moreover, Algorithm~\ref{alg:SP} can be used with arbitrary tridiagonal matrices $Q\succeq 0$, whereas the method of \cite{atamturk2021sparse} requires the additional assumption that $Q$ is a Stieltjes matrix.

\subsection{Inference with two-dimensional graphical models}\label{subsec:compute graphical}
In the previous section, we reported experiments with tridiagonal matrices, where Algorithm~\ref{alg:SP} delivers the optimal solution of the mixed-integer problem.
In this section, we report our computational experiments with solving inference problems \eqref{eq:miqo} using Algorithm~\ref{alg:Decomposition} (which is not guaranteed to find an optimal solution) and the big-M formulation. In the considered instances, graph $\mathcal{G}$ is given by the two-dimensional lattice depicted in Figure~\ref{fig: 2D}, that is, elements of $N$ are arranged in a grid and there are edges between horizontally/vertically adjacent vertices. We consider instances with grid sizes $10\times 10$ and $40\times 40$, thus resulting in instances with $n=100$ and $n=1,600$, respectively. The data for $y$ are generated similarly to \cite{he2021comparing}, where $\sigma$ is the standard deviation of the noise terms $\epsilon_i$. The data is available online at \url{https://sites.google.com/usc.edu/gomez/data}. 

We test two different step sizes\footnote{For step size $s_k=1/k$, we modify line~\ref{line:decomposition_dual} of Algorithm~\ref{alg:Decomposition} to $(\alpha,\beta)\leftarrow (\alpha,\beta)+s_k \rho(\bar x,\bar z)$ (without normalization), since this version performed better in our computations.} $s_k=1/k$ and $s_k=(1.01)^{-k}$ for Algorithm~\ref{alg:Decomposition}. For both the big-M formulation and Algorithm~\ref{alg:Decomposition}, we stop whenever the proven optimality gap  is less than 1\%. Moreover,  for the big-M formulation, we set a time limit of one hour, and  for Algorithm~\ref{alg:Decomposition} we set an iteration limit of $\bar k=300$ in instances with $n=100$, and $\bar k=100$ in instances with $n=1,600$. 
Tables~\ref{tab:10graphical} and \ref{tab:40graphical} report results with $n=100$ and $n=1,600$, respectively. They show the time (in seconds) and gaps proven by each method, as well as the number of iterations for Algorithm~\ref{alg:Decomposition} and the number of branch-and-bound nodes explored by Gurobi. Each row represents an average over five instances.

\begin{table}[htbp]
\setlength{\tabcolsep}{1pt}
\centering
\caption{$10\times 10$ Graphical model, i.e., $n=100$. We set $a_i=\mu$ for all $i\in N$, and choose $\mu$ so that in an optimal solution, $\|x\|_0$ approximately matches the number of nonzeros of the underlying signal.}
\begin{tabular}{cc|ccc|ccc|ccc}\hline
\multirow{2}[0]{*}{$\bm{\sigma}$} & \multirow{2}[0]{*}{$\bm{\mu}$}  & \multicolumn{3}{|c|}{\textbf{Algorithm~\ref{alg:Decomposition},} \bm{$s_k=(1.01)^{-k}$}} & \multicolumn{3}{|c|}{\textbf{Algorithm~\ref{alg:Decomposition},} $\bm{s_k=1/k}$} & \multicolumn{3}{|c}{\textbf{Big-M}} \\
&     &  \textbf{Iter.} & \textbf{Time(s)} & \textbf{Gap}   & \textbf{Iter.} & \textbf{Time(s)} & \textbf{Gap}   & \textbf{Nodes}  & \textbf{Time(s)} & \textbf{Gap} \\\hline
0.02  & 0.5   & 14    & 0.2   & $<$1\%  & 9     & 0.2   & $<$1\%  & 68    & 0.1   & 0.0\% \\
0.1   & 0.5   & 22    & 0.4   & $<$1\%  & 7     & 0.1   &$<$1\%  & 556   & 0.3   & 0.0\% \\
0.3   & 0.1   & 52    & 0.9   & $<$1\%  & 9     & 0.2   & $<$1\%  & 1,431,538 & 268.6 & $<$1\% \\
0.5   & 0.1   & 193   & 3.2   & $<$1\%  & 27    & 0.5   & $<$1\%  & 14,116,955 & TL & 4.7\% \\\hline
\end{tabular}%
\label{tab:10graphical}%
\end{table}%

\begin{table}[htbp]
\setlength{\tabcolsep}{1pt}
\centering
\caption{$40\times 40$ Graphical model, i.e., $n=1600$.We set $a_i=\mu$ for all $i\in N$, and choose $\mu$ so that in an optimal solution, $\|x\|_0$ approximately matches the number of nonzeros of the underlying signal.}
\begin{tabular}{cc|ccc|ccc|ccc}\hline
\multirow{2}[0]{*}{$\bm{\sigma}$} & \multirow{2}[0]{*}{$\bm{\mu}$}  & \multicolumn{3}{|c|}{\textbf{Algorithm~\ref{alg:Decomposition},} \bm{$s_k=(1.01)^{-k}$}} & \multicolumn{3}{|c|}{\textbf{Algorithm~\ref{alg:Decomposition},} $\bm{s_k=1/k}$} & \multicolumn{3}{|c}{\textbf{Big-M}} \\
&     &  \textbf{Iter.} & \textbf{Time(s)} & \textbf{Gap} & \textbf{Iter.} & \textbf{Time(s)} & \textbf{Gap}   & \textbf{Nodes}  & \textbf{Time(s)} & \textbf{Gap} \\\hline
0.02  & 0.05  & 9     & 28.3  & $<$1\%  & 15    & 47.1  & $<$1\%  & 5,485  & 23.1  & $<$1\% \\
0.1   & 0.05  & 8     & 27.2  & $<$1\%  & 13    & 43.6  & $<$1\%  & 618,310 & TL & 3.9\% \\
0.3   & 0.025 & 73    & 224.2 & $<$1\%  & 20    & 62.4  & $<$1\%  & 639,431 & TL & 21.4\% \\
0.5   & 0.05  & 100   & 303.1 & 1.6\% & 61    & 190.5 & 1.0\% & 669,872 & TL & 30.9\% \\
\hline
\end{tabular}%
\label{tab:40graphical}%
\end{table}%

We see that the big-M formulation can be solved fast for low noise values, but struggles in high-noise regimes. For example, if $n=100$, problems with $\sigma\leq 0.1$ are solved in under one second, while problems with $\sigma=0.5$ cannot be solved within the time limit. For instances with $n=1,600$, gaps can be as large as 30\% in high noise regimes. In contrast, Algorithm~\ref{alg:Decomposition} consistently delivers solutions with low optimality gaps. For example, the worst reported gap is 1.6\% ($n=1,600$, $\sigma=0.5$, $s_k=1.01^{-k}$), but is often much less. In particular, using step size $s_k=1/k$, average optimality gaps of at most $1.0\%$ are obtained in all cases. In terms of run times, Algorithm~\ref{alg:Decomposition} runs in seconds on instances with $n=100$, and in under five minutes on instances with $n=1,600$. 

In summary, for the instances that are not solved to optimality using the big-M formulation, Algorithm~\ref{alg:Decomposition} is able to reduce the optimality gaps by at least an order of magnitude while requiring only a small fraction of the computational time.

\bibliographystyle{abbrv}
\bibliography{reference}

\begin{thebibliography}{10}

\bibitem{akturk2009strong}
M.~S. Akt{\"u}rk, A.~Atamt{\"u}rk, and S.~G{\"u}rel.
\newblock A strong conic quadratic reformulation for machine-job assignment
  with controllable processing times.
\newblock {\em Operations Research Letters}, 37:187--191, 2009.

\bibitem{anstreicher2021quadratic}
K.~M. Anstreicher and S.~Burer.
\newblock Quadratic optimization with switching variables: The convex hull for
  $n= 2$.
\newblock {\em Mathematical Programming}, 188:421--441, 2021.

\bibitem{atamturk2018strong}
A.~Atamt{\"u}rk and A.~G{\'o}mez.
\newblock Strong formulations for quadratic optimization with {M}-matrices and
  indicator variables.
\newblock {\em Mathematical Programming}, 170:141--176, 2018.

\bibitem{atamturk2019rank}
A.~Atamt\"urk and A.~G\'omez.
\newblock Rank-one convexification for sparse regression.
\newblock {\em arXiv preprint arXiv:1901.10334}, 2019.

\bibitem{atamturk2020safe}
A.~Atamt\"urk and A.~G{\'o}mez.
\newblock Safe screening rules for {L0}-regression from perspective
  relaxations.
\newblock In {\em International Conference on Machine Learning}, pages
  421--430. PMLR, 2020.

\bibitem{atamturk2020supermodularity}
A.~Atamt\"urk and A.~G\'omez.
\newblock Supermodularity and valid inequalities for quadratic optimization
  with indicators.
\newblock {\em arXiv preprint arXiv:2012.14633}, 2020.

\bibitem{atamturk2021sparse}
A.~Atamt\"urk, A.~G\'omez, and S.~Han.
\newblock Sparse and smooth signal estimation: Convexification of
  {L0}-formulations.
\newblock {\em Journal of Machine Learning Research}, 22(52):1--43, 2021.

\bibitem{Bertsimas2016}
D.~Bertsimas, A.~King, and R.~Mazumder.
\newblock Best subset selection via a modern optimization lens.
\newblock {\em The Annals of Statistics}, 44:813--852, 2016.

\bibitem{besag1974spatial}
J.~Besag.
\newblock Spatial interaction and the statistical analysis of lattice systems.
\newblock {\em Journal of the Royal Statistical Society: Series B
  (Methodological)}, 36(2):192--225, 1974.

\bibitem{besag1995conditional}
J.~Besag and C.~Kooperberg.
\newblock On conditional and intrinsic autoregressions.
\newblock {\em Biometrika}, 82(4):733--746, 1995.

\bibitem{besag1991bayesian}
J.~Besag, J.~York, and A.~Molli{\'e}.
\newblock Bayesian image restoration, with two applications in spatial
  statistics.
\newblock {\em Annals of the Institute of Statistical Mathematics},
  43(1):1--20, 1991.

\bibitem{B:miqp}
D.~Bienstock.
\newblock Computational study of a family of mixed-integer quadratic
  programming problems.
\newblock {\em Mathematical Programming}, 74(2):121--140, 1996.

\bibitem{boyd2004convex}
S.~Boyd, S.~P. Boyd, and L.~Vandenberghe.
\newblock {\em Convex optimization}.
\newblock Cambridge university press, 2004.

\bibitem{boyd2003subgradient}
S.~Boyd, L.~Xiao, and A.~Mutapcic.
\newblock Subgradient methods.
\newblock {\em Lecture notes of EE392o, Stanford University, Autumn Quarter},
  2004:2004--2005, 2003.

\bibitem{Ceria1999}
S.~Ceria and J.~Soares.
\newblock Convex programming for disjunctive convex optimization.
\newblock {\em Mathematical Programming}, 86:595--614, 1999.

\bibitem{chen2017strong}
Y.~Chen, D.~Ge, M.~Wang, Z.~Wang, Y.~Ye, and H.~Yin.
\newblock Strong np-hardness for sparse optimization with concave penalty
  functions.
\newblock In {\em International Conference on Machine Learning}, pages
  740--747. PMLR, 2017.

\bibitem{cozad2014learning}
A.~Cozad, N.~V. Sahinidis, and D.~C. Miller.
\newblock Learning surrogate models for simulation-based optimization.
\newblock {\em AIChE Journal}, 60(6):2211--2227, 2014.

\bibitem{das2008algorithms}
A.~Das and D.~Kempe.
\newblock Algorithms for subset selection in linear regression.
\newblock In {\em Proceedings of the Fortieth Annual ACM Symposium on Theory of
  Computing}, pages 45--54, 2008.

\bibitem{datta2010numerical}
B.~N. Datta.
\newblock {\em Numerical linear algebra and applications}, volume 116.
\newblock SIAM, 2010.

\bibitem{davarnia2021outer}
D.~Davarnia and W.-J. Van~Hoeve.
\newblock Outer approximation for integer nonlinear programs via decision
  diagrams.
\newblock {\em Mathematical Programming}, 187(1):111--150, 2021.

\bibitem{del2020subset}
A.~Del~Pia, S.~S. Dey, and R.~Weismantel.
\newblock Subset selection in sparse matrices.
\newblock {\em SIAM Journal on Optimization}, 30(2):1173--1190, 2020.

\bibitem{em87}
G.~Eppen and R.~Martin.
\newblock {Solving multi-item capacitated lot-sizing problems with variable
  definition}.
\newblock {\em Operations Research}, 35(6):832--848, 1987.

\bibitem{Fang2019}
E.~X. Fang, H.~Liu, and M.~Wang.
\newblock Blessing of massive scale: spatial graphical model estimation with a
  total cardinality constraint approach.
\newblock {\em Mathematical Programming}, 176(1):175--205, Jul 2019.

\bibitem{fattahi2021scalable}
S.~Fattahi and A.~G\'omez.
\newblock Scalable inference of sparsely-changing {Markov} random fields with
  strong statistical guarantees.
\newblock {\em Forthcoming in NeurIPS}, 2021.

\bibitem{frangioni2017improving}
A.~Frangioni, F.~Furini, and C.~Gentile.
\newblock Improving the approximated projected perspective reformulation by
  dual information.
\newblock {\em Operations Research Letters}, 45:519--524, 2017.

\bibitem{Frangioni2006}
A.~Frangioni and C.~Gentile.
\newblock Perspective cuts for a class of convex 0--1 mixed integer programs.
\newblock {\em Mathematical Programming}, 106:225--236, 2006.

\bibitem{frangioni2018decompositions}
A.~Frangioni, C.~Gentile, and J.~Hungerford.
\newblock Decompositions of semidefinite matrices and the perspective
  reformulation of nonseparable quadratic programs.
\newblock {\em Mathematics of Operations Research}, 45(1):15--33, 2020.

\bibitem{GK13}
D.~Gade and S.~K\"u\c{c}\"ukyavuz.
\newblock Formulations for dynamic lot sizing with service levels.
\newblock {\em Naval Research Logistics}, 60(2):87--101, 2013.

\bibitem{garey1979computers}
M.~R. Garey and D.~S. Johnson.
\newblock {\em Computers and intractability}, volume 174.
\newblock freeman San Francisco, 1979.

\bibitem{geman1986markov}
S.~Geman and C.~Graffigne.
\newblock Markov random field image models and their applications to computer
  vision.
\newblock In {\em Proceedings of the International Congress of Mathematicians},
  volume~1, page~2. Berkeley, CA, 1986.

\bibitem{gomez2021outlier}
A.~G\'omez.
\newblock Outlier detection in time series via mixed-integer conic quadratic
  optimization.
\newblock {\em SIAM Journal on Optimization}, 31(3):1897--1925, 2021.

\bibitem{Gunluk2010}
O.~G{\"u}nl{\"u}k and J.~Linderoth.
\newblock Perspective reformulations of mixed integer nonlinear programs with
  indicator variables.
\newblock {\em Mathematical Programming}, 124:183--205, 2010.

\bibitem{hga:2x2}
S.~Han, A.~G{\'o}mez, and A.~Atamt{\"u}rk.
\newblock 2x2 convexifications for convex quadratic optimization with indicator
  variables.
\newblock {\em arXiv preprint arXiv:2004.07448}, 2020.

\bibitem{hazimeh2020sparse}
H.~Hazimeh, R.~Mazumder, and A.~Saab.
\newblock Sparse regression at scale: Branch-and-bound rooted in first-order
  optimization.
\newblock {\em arXiv preprint arXiv:2004.06152}, 2020.

\bibitem{he2021comparing}
Z.~He, S.~Han, A.~G\'omez, Y.~Cui, and J.-S. Pang.
\newblock Comparing solution paths of sparse quadratic minimization with a
  {Stieltjes} matrix.
\newblock {\em Optimization Online:
  \url{http://www.optimization-online.org/DB_HTML/2021/09/8608.html}}, 2021.

\bibitem{hochbaum2001efficient}
D.~S. Hochbaum.
\newblock An efficient algorithm for image segmentation, {Markov} random fields
  and related problems.
\newblock {\em Journal of the ACM (JACM)}, 48(4):686--701, 2001.

\bibitem{Jeon2017}
H.~Jeon, J.~Linderoth, and A.~Miller.
\newblock Quadratic cone cutting surfaces for quadratic programs with on--off
  constraints.
\newblock {\em Discrete Optimization}, 24:32--50, 2017.

\bibitem{kruskal1956shortest}
J.~B. Kruskal.
\newblock On the shortest spanning subtree of a graph and the traveling
  salesman problem.
\newblock {\em Proceedings of the American Mathematical Society}, 7(1):48--50,
  1956.

\bibitem{KSMW20}
S.~K{\"u}{\c{c}}{\"u}kyavuz, A.~Shojaie, H.~Manzour, and L.~Wei.
\newblock Consistent second-order conic integer programming for learning
  {Bayesian} networks.
\newblock {\em arXiv preprint arXiv:2005.14346}, 2020.

\bibitem{lozano2020consistent}
L.~Lozano, D.~Bergman, and J.~C. Smith.
\newblock On the consistent path problem.
\newblock {\em Operations Resesarch}, 68(6):1913--1931, 2020.

\bibitem{magnanti1995optimal}
T.~L. Magnanti and L.~A. Wolsey.
\newblock Optimal trees.
\newblock {\em Handbooks in Operations Research and Management Science},
  7:503--615, 1995.

\bibitem{MKS21}
H.~Manzour, S.~K{\"u}{\c{c}}{\"u}kyavuz, H.-H. Wu, and A.~Shojaie.
\newblock Integer programming for learning directed acyclic graphs from
  continuous data.
\newblock {\em {INFORMS} Journal on Optimization}, 3(1):46--73, 2021.

\bibitem{mao2018spatio}
X.~Mao, K.~Qiu, T.~Li, and Y.~Gu.
\newblock Spatio-temporal signal recovery based on low rank and differential
  smoothness.
\newblock {\em IEEE Transactions on Signal Processing}, 66(23):6281--6296,
  2018.

\bibitem{nesterov2009primal}
Y.~Nesterov.
\newblock Primal-dual subgradient methods for convex problems.
\newblock {\em Mathematical Programming}, 120(1):221--259, 2009.

\bibitem{nesterov1983method}
Y.~E. Nesterov.
\newblock A method for solving the convex programming problem with convergence
  rate {$O(1/k^2)$}.
\newblock In {\em Doklady Akademii Nauk {SSSR}}, volume 269, pages 543--547,
  1983.

\bibitem{richard2010lifting}
J.-P.~P. Richard and M.~Tawarmalani.
\newblock Lifting inequalities: a framework for generating strong cuts for
  nonlinear programs.
\newblock {\em Mathematical Programming}, 121:61--104, 2010.

\bibitem{saquib1998ml}
S.~S. Saquib, C.~A. Bouman, and K.~Sauer.
\newblock {ML} parameter estimation for {M}arkov random fields with
  applications to bayesian tomography.
\newblock {\em IEEE Transactions on Image Processing}, 7(7):1029--1044, 1998.

\bibitem{Sion58}
M.~Sion.
\newblock {On general minimax theorems.}
\newblock {\em Pacific Journal of Mathematics}, 8(1):171 -- 176, 1958.

\bibitem{tutte1954short}
W.~T. Tutte.
\newblock A short proof of the factor theorem for finite graphs.
\newblock {\em Canadian Journal of Mathematics}, 6:347--352, 1954.

\bibitem{wei2020ideal}
L.~Wei, A.~G\'omez, and S.~K\"u\c{c}\"ukyavuz.
\newblock Ideal formulations for constrained convex optimization problems with
  indicator variables.
\newblock {\em arXiv preprint arXiv:2007.00107}, 2020.

\bibitem{wei2020convexification}
L.~Wei, A.~G{\'o}mez, and S.~K{\"u}{\c{c}}{\"u}kyavuz.
\newblock On the convexification of constrained quadratic optimization problems
  with indicator variables.
\newblock In {\em International Conference on Integer Programming and
  Combinatorial Optimization}, pages 433--447. Springer, 2020.

\bibitem{w02}
L.~A. Wolsey.
\newblock {Solving multi-item lot-sizing problems with an MIP solver using
  classification and reformulation}.
\newblock 48(12):1587--1602, 2002.

\bibitem{wolsey2020integer}
L.~A. Wolsey.
\newblock {\em Integer programming}.
\newblock John Wiley \& Sons, 2020.

\bibitem{wolsey1999integer}
L.~A. Wolsey and G.~L. Nemhauser.
\newblock {\em Integer and combinatorial optimization}.
\newblock John Wiley \& Sons, 1999.

\bibitem{wu2010maximum}
H.~Wu and F.~No{\'e}.
\newblock Maximum a posteriori estimation for {M}arkov chains based on gaussian
  {Markov} random fields.
\newblock {\em Procedia Computer Science}, 1(1):1665--1673, 2010.

\bibitem{xie2020scalable}
W.~Xie and X.~Deng.
\newblock Scalable algorithms for the sparse ridge regression.
\newblock {\em SIAM Journal on Optimization}, 30:3359--3386, 2020.

\bibitem{ziniel2010tracking}
J.~Ziniel, L.~C. Potter, and P.~Schniter.
\newblock Tracking and smoothing of time-varying sparse signals via approximate
  belief propagation.
\newblock In {\em 2010 Conference Record of the Forty Fourth Asilomar
  Conference on Signals, Systems and Computers}, pages 808--812. IEEE, 2010.

\end{thebibliography}

\end{document}